\newtheorem{theorem}{Theorem}
\newtheorem{definition}[theorem]{Definition}
\newtheorem{proposition}[theorem]{Proposition}
\newtheorem{corollary}[theorem]{Corollary}
\newtheorem{lemma}[theorem]{Lemma}
\theoremstyle{remark}
\newtheorem{example}[theorem]{Example}
\newtheorem{remark}[theorem]{Remark}
\title[Binomial ideals and congruences on $\mathbb{N}^n$]{Binomial ideals and congruences on $\mathbb{N}^n$\\ \mbox{}\\
\begin{small}
  Dedicated to Professor Antonio Campillo\\ on the occasion of his 65th birthday.
\end{small} 
}
\author{{Laura Felicia}~Matusevich}
\thanks{The first author was partially supported by NSF grant DMS-1500832}
\address{Mathematics Departament\newline \indent  Texas A\&M University \newline \indent College Station, TX 77843 (USA)}
\email{laura@math.tamu.edu}
\author{Ignacio~Ojeda}
\thanks{The second author was partially supported by the project MTM2015-65764-C3-1, National Plan I+D+I, and by Junta de Extremadura (FEDER funds) - FQM-024}
\address{Departamento de Matem\'{a}ticas\newline \indent Universidad de Extremadura \newline \indent  E-06071 Badajoz (SPAIN)}
\email{ojedamc@unex.es}
\begin{document}


\begin{abstract}
A \emph{congruence} on $\mathbb{N}^n$  is an equivalence relation on
$\mathbb{N}^n$ that is compatible with the additive structure. If
$\Bbbk$ is a field, and $I$ is a \emph{binomial ideal}
in $\Bbbk[X_1,\dots,X_n]$ (that is, an ideal generated by polynomials
with at most two terms), then $I$ induces a congruence on 
$\mathbb{N}^n$ by declaring $\mathbf{u}$ and $\mathbf{v}$ to be
equivalent if there is a linear combination with nonzero coefficients
of $\mathbf{X}^{\mathbf{u}}$ and $\mathbf{X}^{\mathbf{v}}$ that
belongs to $I$. While every congruence on $\mathbb{N}^n$ arises this
way, this is not a one-to-one correspondence, as many binomial
ideals may induce the same congruence. Nevertheless, the link between
a binomial ideal and its corresponding congruence is strong, and one may
think of congruences as the underlying combinatorial structures of
binomial ideals. In the current literature, the theories of binomial ideals and congruences on $\mathbb{N}^n$ are developed separately. The aim of
this survey paper is to provide a detailed parallel exposition, that provides algebraic intuition for the combinatorial analysis of congruences. For the elaboration of this survey paper, we followed mainly \cite{KM} with an eye on \cite{ES96} and \cite{OjPie}.
\end{abstract}

\keywords{Binomial ideals. Graded Algebras. Congruences. Semigroup ideals. Toric ideals. Primary decomposition.}

\maketitle


\section{Preliminaries}\label{Sect1}

In this section we introduce our main objects of study: binomial
ideals and monoid congruences, and recall some basic 
results.

Throughout this article, $\Bbbk[\mathbf{X}] := \Bbbk[X_1, \ldots,
X_n]$ is the commutative polynomial
ring in $n$ variables over a field
$\Bbbk$. In what follows we write $\mathbf{X}^\mathbf{u}$ for
$X_1^{u_1} X_2^{u_2} \cdots X_n^{u_n}$, where $\mathbf{u} = (u_1, u_2,
\ldots, u_n) \in \mathbb{N}^n$,  where here and henceforth,
$\mathbb{N}$ denotes the set of nonnegative integers. 


\subsection{Binomial ideals}\mbox{}\par

In this section we begin our study of binomial ideals. First of all, we recall that a \textbf{binomial} in $\Bbbk[\mathbf{X}]$ is a polynomial with at
most two terms, say $\lambda \mathbf{X}^\mathbf{u} + \mu
\mathbf{X}^\mathbf{v}$, where $\lambda, \mu \in \Bbbk$ and
$\mathbf{u}, \mathbf{v} \in \mathbb{N}^n$. We emphasize that, according to this definition, monomials are binomials.

\begin{definition}
\label{def:binomialIdeal}
A \textbf{binomial ideal} of $\Bbbk[\mathbf{X}]$ is an ideal of
$\Bbbk[\mathbf{X}]$ generated by binomials. 
\end{definition}

Throughout this article, we assume that the base field $\Bbbk$ is
algebraically closed. The reason for this is that some desirable
results are not valid over an arbitrary field. These include the 
characterization of binomial prime ideals (Theorem~\ref{Th
  CarPrimos}), and the fact that associated primes of binomial ideals
are binomial (see, e.g. Proposition~\ref{Prop Celular-CP}). This failure can be seen even
in one variable: the ideal $\langle X^2+1 \rangle \subset
\mathbb{R}[X]$ is prime, but does not conform to the description in 
Theorem~\ref{Th CarPrimos}; the ideal $\langle X^3-1 \rangle \subset
\mathbb{R}[X]$ has the associated prime $\langle X^2+X+1 \rangle$,
which is not binomial. It is also worth noting that the characteristic
of $\Bbbk$ plays a role when studying binomial ideals, as can be seen
by the different behaviors presented by $\langle X^p-1 \rangle \subset
\Bbbk[X]$ depending on whether the characteristic of $\Bbbk$ is $p$.

The following result is an invaluable tool when studying binomial ideals.

\begin{proposition}\label{Prop1.1}
Let $I \subset \Bbbk[\mathbf{X}]$ be an ideal. The following are equivalent:
\begin{enumerate}
\item
$I$ is a binomial ideal. \label{Prop1.1:itema}
\item
The reduced Gr\"obner basis of $I$ with respect to any monomial order
on $ \Bbbk[\mathbf{X}]$ consists of binomials.\label{Prop1.1:itemb} 
\item
A universal Gr\"obner basis of $I$ consists of
binomials.\label{Prop1.1:itemc} 
\end{enumerate}
\end{proposition}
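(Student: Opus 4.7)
The plan is to prove the cycle \eqref{Prop1.1:itema}$\Rightarrow$\eqref{Prop1.1:itemb}$\Rightarrow$\eqref{Prop1.1:itemc}$\Rightarrow$\eqref{Prop1.1:itema}. The implication \eqref{Prop1.1:itemc}$\Rightarrow$\eqref{Prop1.1:itema} is essentially free: a universal Gr\"obner basis is, in particular, a generating set for $I$, so if its elements are binomials then $I$ is a binomial ideal by definition. The implication \eqref{Prop1.1:itemb}$\Rightarrow$\eqref{Prop1.1:itemc} also reduces to a known structural fact: although there are infinitely many monomial orders, the collection of reduced Gr\"obner bases that arise as the order varies is finite (the Gr\"obner fan has finitely many maximal cones). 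One may therefore form a finite universal Gr\"obner basis as the union of reduced Gr\"obner bases over one representative order per cone, and by hypothesis \eqref{Prop1.1:itemb} each of these sets consists of binomials, so their union does too.

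The substantive implication is \eqref{Prop1.1:itema}$\Rightarrow$\eqref{Prop1.1:itemb}, which I would handle by running Buchberger's algorithm and checking that the binomial property is preserved at every step. Fix a monomial order and a generating set of binomials $f_i = \lambda_i \mathbf{X}^{\mathbf{u}_i}+\mu_i \mathbf{X}^{\mathbf{v}_i}$, with the convention that $\lambda_i \mathbf{X}^{\mathbf{u}_i}$ is the leading term (allowing $\mu_i=0$, so that monomials are included). Two operations arise. First, the $S$-polynomial $S(f_i,f_j)$ is, by construction, a difference of two scalar multiples of monomial-scaled binomials in which the leading terms cancel; what remains involves at most the two trailing terms, hence is again a binomial. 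Second, a single step of the division algorithm, applied to a binomial $b$ and a binomial divisor $f_j$, replaces one term of $b$ by a scalar multiple of a monomial times the trailing term of $f_j$; this preserves the binomial shape (and may collapse to a monomial if cancellation occurs, which is still a binomial in our sense). Iterating, every polynomial produced or stored by Buchberger's algorithm is a binomial, so the algorithm outputs a Gr\"obner basis of binomials. Passing from this basis to the reduced Gr\"obner basis consists of further reductions of binomials modulo binomials and of normalizing leading coefficients, operations that again preserve the binomial property. Hence the reduced Gr\"obner basis consists of binomials, establishing \eqref{Prop1.1:itemb}.

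The main obstacle is conceptual rather than technical: one must be comfortable with the fact that a reduction step can produce a monomial (through unexpected cancellation), and the argument only goes through cleanly because our definition of binomial explicitly includes monomials. A secondary point worth checking is that the final interreduction yielding the \emph{reduced} Gr\"obner basis does not leave the class of binomials, but this is automatic from the same one-step observation. Everything else is bookkeeping, and no machinery beyond Buchberger's algorithm and the finiteness of the Gr\"obner fan is needed.
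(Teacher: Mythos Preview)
Your proof is correct and follows the same approach as the paper: the key implication \eqref{Prop1.1:itema}$\Rightarrow$\eqref{Prop1.1:itemb} is handled by observing that Buchberger's algorithm preserves binomiality, which is exactly what the paper's one-line proof asserts. You supply more detail than the paper does---spelling out the reduction step, the interreduction to the reduced basis, and the easy implications \eqref{Prop1.1:itemb}$\Rightarrow$\eqref{Prop1.1:itemc}$\Rightarrow$\eqref{Prop1.1:itema}---but the underlying argument is the same.
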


\begin{proof}
If $I$ has a binomial generating set, the $S$-polynomials produced by
a step in the Buchberger algorithm are necessarily binomials. 
\end{proof} 

Since the Buchberger algorithm for computing Gr\"obner bases respects the binomial condition, Gr\"obner techniques are particularly effective when
working with these objects. In particular, it can be shown that some important
ideal theoretic operations preserve binomiality. For instance, it is easy to show that eliminating variables from binomial ideals
results in binomial ideals.

\begin{corollary}
\label{coro:elimIsBinomial}
Let $I$ be a binomial ideal of $\Bbbk[\mathbf{X}]$. The
elimination ideal $I \cap \Bbbk[X_i \mid i \in \sigma]$ is a binomial ideal
for every nonempty subset $\sigma \subset \{1,\dots,n\}$.
\end{corollary}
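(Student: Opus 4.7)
The plan is to apply Proposition~\ref{Prop1.1} together with the classical elimination theorem from Gröbner basis theory.

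First I would fix the subset $\sigma \subset \{1,\dots,n\}$ and choose a monomial order on $\Bbbk[\mathbf{X}]$ adapted to $\sigma$, namely an elimination order for the variables $\{X_j \mid j \notin \sigma\}$. Concretely, this is a monomial order with the property that any monomial involving at least one variable $X_j$ with $j \notin \sigma$ is strictly larger than every monomial in $\Bbbk[X_i \mid i \in \sigma]$; for example, a lexicographic order with the eliminated variables listed first, or a suitable product order.

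Next, by part~(\ref{Prop1.1:itemb}) of Proposition~\ref{Prop1.1} applied to this order, the reduced Gröbner basis $\mathcal{G}$ of $I$ consists of binomials. The elimination theorem then guarantees that
\[
\mathcal{G}_\sigma := \mathcal{G} \cap \Bbbk[X_i \mid i \in \sigma]
\]
is a generating set (in fact, a Gröbner basis) of $I \cap \Bbbk[X_i \mid i \in \sigma]$. Since $\mathcal{G}_\sigma \subset \mathcal{G}$, every element of $\mathcal{G}_\sigma$ is a binomial, so $I \cap \Bbbk[X_i \mid i \in \sigma]$ is generated by binomials, hence is a binomial ideal.

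There is no real obstacle here: the argument is essentially a one-line application of Proposition~\ref{Prop1.1} combined with the elimination property of Gröbner bases. The only point requiring some care is making sure that the chosen monomial order genuinely has the elimination property for the complementary variables, which is a standard fact and presumably has been discussed or is taken for granted at this stage of the exposition.
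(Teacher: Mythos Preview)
Your proof is correct and follows exactly the same approach as the paper: the paper's proof is the one-sentence observation that the elimination ideal is generated by a subset of the reduced Gr\"obner basis of $I$ with respect to a suitable lexicographic order, which is precisely what you spell out in more detail.
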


\begin{proof}
The intersection is generated by a subset of the reduced Gr\"obner
basis of $I$ with respect to a suitable lexicographic order.  
\end{proof}

\begin{example}
Let $\varphi : \Bbbk[X,Y,Z] \to \Bbbk[T]$ be the $\Bbbk-$algebra
morphism such that \[X \to T^3, Y \to T^4\ \text{ and } Z \to T^5.\] It is known
that $\ker(\varphi) = \langle X - T^3, Y - T^4, Z - T^5 \rangle \cap
\Bbbk[X,Y,Z]$. As a consequence of
Corollary~\ref{coro:elimIsBinomial}, $\ker(\varphi)$ is a binomial
ideal. In fact, $\ker(\varphi)$ is the ideal generated by $\{Y^2-XZ, X^2Y-Z^2, X^3-YZ \},$ as can be checked by executing the following code in Macaulay2 (\cite{M2}): 
\begin{verbatim}
      R = QQ[X,Y,Z,T]
      I = ideal(X-T^3,Y-T^4,Z-T^5)
      eliminate(T,I)
\end{verbatim}
\end{example}

Taking ideal quotients is a fundamental operation in commutative
algebra. We can now show that some ideal quotients of binomial ideals
are binomial.

\begin{corollary}
\label{coro:colonIdealIsBinomial}
If $I$ is a binomial ideal of $\Bbbk[\mathbf{X}]$, and
$\mathbf{X}^{\mathbf{u}}$ is a monomial, then
$(I:\mathbf{X}^{\mathbf{u}})$ is a binomial ideal. 
\end{corollary}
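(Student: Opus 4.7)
The plan is to reduce the colon ideal $(I : \mathbf{X}^{\mathbf{u}})$ to an intersection with the principal monomial ideal $\langle \mathbf{X}^{\mathbf{u}} \rangle$, show that intersection is binomial using the elimination technique of Corollary~\ref{coro:elimIsBinomial}, and then divide out by $\mathbf{X}^{\mathbf{u}}$ to recover the colon ideal.

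First I would record the bijection. Since $\Bbbk[\mathbf{X}]$ is an integral domain and $\mathbf{X}^{\mathbf{u}} \neq 0$, multiplication by $\mathbf{X}^{\mathbf{u}}$ is injective, and its image on $(I : \mathbf{X}^{\mathbf{u}})$ is exactly $I \cap \langle \mathbf{X}^{\mathbf{u}} \rangle$. Concretely, if $\{c_1, \ldots, c_r\}$ is any generating set of $I \cap \langle \mathbf{X}^{\mathbf{u}} \rangle$ and we write $c_j = \mathbf{X}^{\mathbf{u}} d_j$, then $(I : \mathbf{X}^{\mathbf{u}}) = \langle d_1, \ldots, d_r \rangle$.

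Next I would verify that $I \cap \langle \mathbf{X}^{\mathbf{u}} \rangle$ is binomial via the standard trick of adjoining an auxiliary variable $t$. One has
\[
I \cap \langle \mathbf{X}^{\mathbf{u}} \rangle \;=\; \bigl(t\,I + (1-t)\,\langle \mathbf{X}^{\mathbf{u}} \rangle \bigr) \,\cap\, \Bbbk[\mathbf{X}].
\]
If $\{b_1, \ldots, b_m\}$ is a binomial generating set of $I$, then the ideal in $\Bbbk[\mathbf{X}, t]$ on the right is generated by the binomials $\{t b_1, \ldots, t b_m,\ \mathbf{X}^{\mathbf{u}} - t\mathbf{X}^{\mathbf{u}}\}$, hence is a binomial ideal of $\Bbbk[\mathbf{X}, t]$. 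Applying Corollary~\ref{coro:elimIsBinomial} (with $\sigma = \{1, \ldots, n\}$, so $t$ is eliminated) then yields that $I \cap \langle \mathbf{X}^{\mathbf{u}} \rangle$ admits a binomial generating set $\{c_1, \ldots, c_r\}$.

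Finally I would perform the division step. Because $\langle \mathbf{X}^{\mathbf{u}} \rangle$ is a monomial ideal, every polynomial in it has each of its monomials divisible by $\mathbf{X}^{\mathbf{u}}$. Therefore if $c_j = \lambda \mathbf{X}^{\mathbf{a}} + \mu \mathbf{X}^{\mathbf{b}}$ is a binomial in $I \cap \langle \mathbf{X}^{\mathbf{u}} \rangle$, then $\mathbf{X}^{\mathbf{u}}$ divides both $\mathbf{X}^{\mathbf{a}}$ and $\mathbf{X}^{\mathbf{b}}$, so $d_j = c_j / \mathbf{X}^{\mathbf{u}} = \lambda \mathbf{X}^{\mathbf{a}-\mathbf{u}} + \mu \mathbf{X}^{\mathbf{b}-\mathbf{u}}$ is again a binomial (the monomial case is analogous). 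Combining with the opening bijection gives the result.

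There is no real obstacle; the only point worth flagging is the innocuous-looking but essential observation that a polynomial lying in a principal \emph{monomial} ideal has that monomial dividing each of its terms, which is what makes the last division step preserve the binomial shape.
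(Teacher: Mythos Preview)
Your proof is correct and follows essentially the same route as the paper's: reduce to showing $I \cap \langle \mathbf{X}^{\mathbf{u}} \rangle$ is binomial via the auxiliary-variable elimination trick $I \cap \langle \mathbf{X}^{\mathbf{u}} \rangle = (tI + (1-t)\langle \mathbf{X}^{\mathbf{u}} \rangle) \cap \Bbbk[\mathbf{X}]$ and Corollary~\ref{coro:elimIsBinomial}, then divide generators by $\mathbf{X}^{\mathbf{u}}$. You are simply a bit more explicit than the paper about why dividing a binomial in $\langle \mathbf{X}^{\mathbf{u}} \rangle$ by $\mathbf{X}^{\mathbf{u}}$ yields a binomial.
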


\begin{proof}
Recall that if $\{f_1,\dots,f_\ell\}$ is a system of generators for
$I\cap\langle \mathbf{X}^{\mathbf{u}} \rangle$, then
$\{f_1/\mathbf{X}^{\mathbf{u}},$ $\dots,$ $f_\ell/\mathbf{X}^{\mathbf{u}}\}$
is a system of generators for $(I:\mathbf{X}^{\mathbf{u}})$. 
Thus, the binomiality of $(I:\mathbf{X}^{\mathbf{u}})$ follows if we show that 
$I\cap\langle \mathbf{X}^\mathbf{u} \rangle$ is binomial.

Introducing an auxiliary variable $T$, we have that \[I\cap\langle
\mathbf{X}^{\mathbf{u}} \rangle = (TI + (1-T)
\langle\mathbf{X}^{\mathbf{u}}\rangle) \cap \Bbbk[\mathbf{X}].\] Since
$TI + (1-T) \langle \mathbf{X}^{\mathbf{u}} \rangle$ is a binomial
ideal, Corollary~\ref{coro:elimIsBinomial} implies that $I\cap\langle
\mathbf{X}^{\mathbf{u}} \rangle$ is also binomial, as we wanted. 
\end{proof}

We remark that the ideal quotient of a binomial ideal by a binomial is
not necessarily binomial, and neither is the ideal quotient of a
binomial ideal by a monomial ideal. When taking colon with a single
binomial, the above proof breaks because the product of two binomials
is not a binomial in general; indeed, \[\big(\langle X^3 - 1 \rangle : \langle X-1 \rangle \big) = \langle X^2+X+1 \rangle \subset \Bbbk[X]. \] In the case of taking ideal quotient by a monomial ideal, say $J = \langle \mathbf{X}^{\mathbf{u}_1},\dots
\mathbf{X}^{\mathbf{u}_r}\rangle$, instead of a single monomial, what
makes the argument invalid is that the ideal $(I : J)$ is equal to $\cap_{i=1}^r (I:\langle \mathbf{X}^{\mathbf{u}_i}
\rangle)$, and the intersection of binomial ideals is not necessarily
binomial, as the following shows: $\langle X - 1 \rangle \cap \langle X - 2 \rangle = \langle X^2-3X+2 \rangle \subset \Bbbk[X]$.

\subsection{Graded algebras}\mbox{}\par

Gradings play a big role when studying binomial ideals. The main
result of this section is that a ring is a quotient of a polynomial
ring by a binomial ideal if and only if it has a special kind of
grading (Theorem~\ref{Prop1.11ES}).

Recall that a $\Bbbk-$algebra of finite type $R$ is \emph{graded} by a
finitely generated commutative monoid $S$ if $R$ is a direct sum  
\[
R = \bigoplus_{\mathbf{a} \in S} R_\mathbf{a}
\]
of $\Bbbk-$vector spaces and the multiplication of $R$ satisfies the
rule $R_\mathbf{a} R_{\mathbf{a}'} = R_{\mathbf{a} + \mathbf{a}'}$. 

\begin{example}
Observe that $\Bbbk[\mathbf{X}] = \bigoplus_{\mathbf{u} \in
  \mathbb{N}^n} {\rm Span}_\Bbbk \{\mathbf{X}^\mathbf{u}\}$. 
\end{example}

\begin{remark}
\label{remark:insteadOfLema1.1}
Let $I$ be \emph{any} ideal of $\Bbbk[\mathbf{X}]$ and let $\pi$ be
the canonical projection of $\Bbbk[\mathbf{X}]$ onto $R :=
\Bbbk[\mathbf{X}]/I$. Let $S$ be the set of all one-dimensional
subspaces ${\rm Span}_\Bbbk \{ \pi(\mathbf{X}^\mathbf{u}) \}$ of $R$;
if the kernel of $\pi$ contains monomials, we adjoin to $S$ the symbol  $\infty$ associated to the
monomials in $\ker(\pi)$. The set $S$ is a commutative monoid
with the operation  
\[
{\rm Span}_\Bbbk \{ \pi(\mathbf{X}^{\mathbf{u}}) \} + {\rm Span}_\Bbbk \{ \pi(\mathbf{X}^{\mathbf{v}}) \} = {\rm Span}_\Bbbk \{ \pi (\mathbf{X}^{\mathbf{u}}\mathbf{X}^{\mathbf{v}}) \} = {\rm Span}_\Bbbk \{ \pi(\mathbf{X}^{\mathbf{u}+\mathbf{v}})\}
\] 
and identity element ${\rm Span}_\Bbbk \{ 1 \} = {\rm Span}_\Bbbk \{
\pi(\mathbf{X}^{\mathbf{0}})\}$. Note that if $\mathbf{X}^{\mathbf{v}}
\in \ker(\pi)$, then for any other monomial $\mathbf{X}^{\mathbf{u}}$,
$\mathbf{X}^{\mathbf{u}}\mathbf{X}^{\mathbf{v}}\in \ker(\pi)$. In
other words, \[{\rm Span}_\Bbbk \{ \pi(\mathbf{X}^{\mathbf{u}}) \} +
\infty = \infty.\]
We point out that the set 
$\{ {\rm Span}_\Bbbk \{\pi(X_1)\}, \dots, {\rm Span}_\Bbbk \{\pi(X_n)\} \}$ generates $S$ as a monoid.
There is a natural $\Bbbk-$vector space surjection
\begin{equation}\label{ecu1.1} 
\bigoplus_{\substack{ {\rm Span}_\Bbbk \{ \pi(\mathbf{X}^\mathbf{u}) \} \in S \\ \mathbf{X}^\mathbf{u} \not\in \ker\pi }} 
{\rm Span}_\Bbbk \{ \pi(\mathbf{X}^\mathbf{u}) \} \rightarrow R.
\end{equation}
We observe that if~\eqref{ecu1.1} is an isomorphism of $\Bbbk-$vector
spaces, then $R$ is \textbf{finely graded} by $S$, meaning that $R$ is
$S-$graded  
and every graded piece has dimension at most $1$.
%
%
%
\end{remark}

The following result provides the first link between binomial ideals and monoids.

\begin{theorem}\label{Prop1.11ES}
%
%
A $\Bbbk-$algebra $R$ of finite type admits a presentation of the form
$\Bbbk[\mathbf{X}]/I$, where $I$ is a binomial ideal, if and only if
$R$ can be finely graded by a finitely genera\-ted commutative monoid.
\end{theorem}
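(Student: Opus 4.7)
The plan is to handle the two implications separately.

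For the forward direction, suppose $R = \Bbbk[\mathbf{X}]/I$ with $I$ binomial, and adopt the notation of Remark~\ref{remark:insteadOfLema1.1}. I would show the surjection~\eqref{ecu1.1} is injective by fixing a monomial order and invoking Proposition~\ref{Prop1.1} to obtain a reduced Gröbner basis of $I$ consisting of binomials. The crucial observation is that an elementary reduction by $\mathbf{X}^{\mathbf{a}} - c\,\mathbf{X}^{\mathbf{b}}$ rewrites a term $\lambda\,\mathbf{X}^{\mathbf{u}}$ (with $\mathbf{X}^{\mathbf{a}} \mid \mathbf{X}^{\mathbf{u}}$) either to $0$ (when $c = 0$, which forces $\pi(\mathbf{X}^{\mathbf{u}}) = 0$, i.e.\ $\mathbf{X}^{\mathbf{u}}\in\ker\pi$) or to a nonzero scalar multiple of another monomial lying in the same $S$-class. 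Consequently, if $f = \sum_i \lambda_i \mathbf{X}^{\mathbf{u}_i} \in I$ with the $\mathbf{X}^{\mathbf{u}_i}$ representing distinct classes in $S \setminus \{\infty\}$, the full reduction of $f$ yields $\sum_i \lambda_i'\,\mathbf{X}^{\mathbf{w}_i}$ with each $\mathbf{X}^{\mathbf{w}_i}$ a standard monomial in the same $S$-class as $\mathbf{X}^{\mathbf{u}_i}$ and $\lambda_i' \in \Bbbk^{*}\lambda_i$.

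The key subsequent step is that distinct standard monomials necessarily lie in distinct $S$-classes: if $\mathbf{X}^{\mathbf{w}_i}$ and $\mathbf{X}^{\mathbf{w}_j}$ were in the same class, then $\mathbf{X}^{\mathbf{w}_i} - c\,\mathbf{X}^{\mathbf{w}_j} \in I$ for some $c \in \Bbbk^{*}$, making one of them a leading monomial of an element of $I$ and violating standardness. Since $f \in I$ reduces to zero, all $\lambda_i' = 0$, hence all $\lambda_i = 0$, proving injectivity of~\eqref{ecu1.1}.

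For the converse, suppose $R = \bigoplus_{s \in S} R_s$ is finely graded. Replacing any generating set by its homogeneous components, I would pick homogeneous $\Bbbk$-algebra generators $r_i \in R_{s_i}$ and consider the surjection $\phi\colon \Bbbk[\mathbf{X}] \to R$ sending $X_i \mapsto r_i$. Grading $\Bbbk[\mathbf{X}]$ by $S$ via $\deg(\mathbf{X}^{\mathbf{u}}) = \sum_i u_i s_i$ makes $\phi$ graded, so $\ker(\phi)$ is generated by its homogeneous components. In each degree $t$ with $R_t = 0$, every degree-$t$ monomial lies in $\ker(\phi)$, giving monomial generators. When $R_t = \Bbbk\, e_t$ is one-dimensional, $\phi(\mathbf{X}^{\mathbf{u}}) = c_{\mathbf{u}}\, e_t$ for some scalars $c_{\mathbf{u}} \in \Bbbk$, and the degree-$t$ relations are spanned by the pairwise binomials $c_{\mathbf{v}}\,\mathbf{X}^{\mathbf{u}} - c_{\mathbf{u}}\,\mathbf{X}^{\mathbf{v}}$ (which degenerate to monomials if one of $c_{\mathbf{u}}, c_{\mathbf{v}}$ vanishes); thus $\ker(\phi)$ is binomial.

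The main obstacle is the termwise tracking in the forward direction: verifying that Gröbner reduction preserves $S$-classes in each term and that standardness of two distinct monomials forbids their lying in a common $S$-class. The converse, by contrast, reduces to routine linear algebra once a homogeneous presentation is in place.
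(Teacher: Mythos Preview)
Your argument is correct; the converse direction matches the paper's essentially verbatim, but your forward direction takes a genuinely different route. The paper does not use Gr\"obner bases here: instead it writes $f=\sum_i \mu_i B_i$ in terms of a $\Bbbk$-vector-space basis of $I$ consisting of two-term binomials (none of whose terms lie in $I$), and then runs a finite chain argument---$\mathbf{X}^{\mathbf{u}_1}$ appears in some $B_{i_1}$, the other term of $B_{i_1}$ appears in some $B_{i_2}$, and so on---to locate a $j\ge 2$ with $\pi(\mathbf{X}^{\mathbf{u}_1})=\pi(\mathbf{X}^{\mathbf{u}_j})$. Your approach replaces this chain by the linearity of the normal form map: since reduction of a monomial by a binomial Gr\"obner basis stays within its $S$-class (or kills it if the class is $\infty$), the normal form of $f$ is a sum over the same set of $S$-classes with nonzero coefficients, and hence must be empty. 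This buys you a concrete transversal of the non-nil $S$-classes by standard monomials, which is a pleasant byproduct the paper's argument does not yield. One small remark: your ``key subsequent step'' (that distinct standard monomials lie in distinct $S$-classes) is a true and nice observation, but it is not actually needed---once you know each $\mathbf{X}^{\mathbf{w}_i}$ inherits the $S$-class of $\mathbf{X}^{\mathbf{u}_i}$ and those classes are distinct by hypothesis, the $\mathbf{X}^{\mathbf{w}_i}$ are automatically distinct monomials.
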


\begin{proof}
%

First assume that $R$ admits a grading of the given type by a
finitely generated commutative monoid $S$.
Let $f_1,\dots,f_n$ be $\Bbbk-$algebra generators of $R$. Without loss
of generality, we may assume that $f_1,\dots,f_n$ are
homogeneous. Denote $\mathbf{a}_i$ the degree of $f_i$, for
$i=1,\dots,n$. Since $R$ is (finely) graded by the monoid generated by 
$\{ \mathbf{a}_1,\dots, \mathbf{a}_n\}$, we may assume that $S$ is
generated by $\{ \mathbf{a}_1,\dots, \mathbf{a}_n\}$.

Give $\Bbbk[\mathbf{X}]$ an $S-$grading by setting the
degree of $X_i$ to be $\mathbf{a}_i$, and consider 
the surjection  $\Bbbk[\mathbf{X}] \to R$
given by $X_i\mapsto f_i$, which is a graded ring homomorphism. The
kernel of this map is a homogeneous ideal of $\Bbbk[\mathbf{X}]$, and
is therefore generated by homogeneous elements. 
On the other hand, by the fine grading condition, for any two monomials $\mathbf{X}^{\mathbf{u}},
\mathbf{X}^{\mathbf{v}} \in \Bbbk[\mathbf{X}]$ with the same
$S-$degree, neither of which maps to zero in $R$, 
there is a scalar $\lambda \in \Bbbk^*$ such that the
binomial  
$\mathbf{X}^{\mathbf{u}} - \lambda \mathbf{X}^{\mathbf{v}} \in
\Bbbk[\mathbf{X}] $ maps to zero in $R$. Thus, the kernel of the above
surjection is generated by binomials.


Conversely, by Remark~\ref{remark:insteadOfLema1.1}, it suffices to
show that the map~\eqref{ecu1.1} is injective. We need to prove that
if $\Sigma$ is a nonempty subset of $ \{ {\rm Span}_\Bbbk\{
\pi(\mathbf{X}^\mathbf{u}) \} \in S \mid \mathbf{X}^\mathbf{u} \not\in
\ker\pi \}$, then the image of $\Sigma$ in $R$ is linearly independent.
This follows if we show that if $f = \sum_{i=1}^r \lambda_i \mathbf{X}^{\mathbf{u}_i} \in I$
with $\lambda_1,\dots,\lambda_r \in \Bbbk^*$ and
$\mathbf{X}^{\mathbf{u}_i} \notin I$ for all $1 \leq i \leq r$, then
there exist $1 \leq j \leq r$ and $\lambda \in \Bbbk^*$ such that
$\mathbf{X}^{\mathbf{u}_1} - \lambda \mathbf{X}^{\mathbf{u}_j} \in I$
(in other words, $\pi(\mathbf{X}^{\mathbf{u}_1}) = \pi(\mathbf{X}^{\mathbf{u}_j})$).
To see this, note that since $I$ is a binomial ideal, it has a
$\Bbbk-$vector space basis consisting of binomials, and therefore we
can write $f = \sum_{i=1}^\ell \mu_i B_i$, where $\mu_1,\dots,\mu_\ell
\in \Bbbk^*$ and each $B_i$ is a binomial in $I$ with two terms,
neither of which is in $I$ (the latter by the assumption on $f$). The
monomial $\mathbf{X}^{\mathbf{u}_1}$ must appear in at least one of
the binomials $B_1,\dots, B_\ell$, say $B_{i_1}$. Of course, the
second monomial appearing in $B_{i_1}$ has the same image under $\pi$ as
$\mathbf{X}^{\mathbf{u}_1}$. 
If this second monomial in $B_{i_1}$ is one of the
$\mathbf{X}^{\mathbf{u}_2},\dots,\mathbf{X}^{\mathbf{u}_r}$, we are
done. Otherwise, the second term of $B_{i_1}$ must appear in another 
of the binomials $B_i$, say $B_{i_2}$. Note that both monomials in
$B_{i_2}$ have the same image under $\pi$ as $\mathbf{X}^{\mathbf{u}_1}$. 
If the second monomial of
$B_{i_2}$ is one of the
$\mathbf{X}^{\mathbf{u}_2},\dots,\mathbf{X}^{\mathbf{u}_r}$, again, we
are done. Otherwise, continue in the same manner. Since we only have
finitely many binomials to consider, this process must stop, and
produce a monomial $\mathbf{X}^{\mathbf{u}_j}$ such that 
$\pi(\mathbf{X}^{\mathbf{u}_1}) = \pi(\mathbf{X}^{\mathbf{u}_j})$.
%
%
\end{proof}

\section{Congruences on monoids and binomial ideals} 

We now start our study of monoid congruences, and their relationship
to binomial ideals. We show how binomial ideals induce congruences,
and how any congruence can arise this way. We also address the
question of when two different binomial ideals give rise to the same congruence.

\begin{definition}
\label{def:congruence}
Let $S$ be a commutative monoid. A \textbf{congruence} $\sim$ on $S$
is an equivalence relation on $S$ which is additively closed: $\mathbf{a}
\sim \mathbf{b} \Rightarrow \mathbf{a} + \mathbf{c} \sim
\mathbf{b}+\mathbf{c}$ for $\mathbf{a},\mathbf{b}$ and $\mathbf{c} \in
S$.  
\end{definition}

The following result, which follows directly from the definition,
gives a first indication that congruences on commutative monoids
are analogous to ideals in commutative rings.

\begin{proposition}
If $\sim$ is a congruence on a commutative monoid $S$, then $S/\!\sim$
is a commutative monoid. 
\qed
\end{proposition}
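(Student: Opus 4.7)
The plan is to define an addition on the quotient set $S/\!\sim$ by picking representatives, verify that this operation is well-defined using the congruence property, and then observe that the monoid axioms (associativity, commutativity, existence of identity) transfer automatically from $S$ to $S/\!\sim$.

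First I would write $[\mathbf{a}]$ for the equivalence class of $\mathbf{a} \in S$ under $\sim$, and define $[\mathbf{a}] + [\mathbf{b}] := [\mathbf{a}+\mathbf{b}]$. The key step, which is really the only substantive point, is showing that this addition does not depend on the choice of representatives. Suppose $\mathbf{a} \sim \mathbf{a}'$ and $\mathbf{b} \sim \mathbf{b}'$. By applying the additive closure property of Definition~\ref{def:congruence} once with $\mathbf{c}=\mathbf{b}$, we get $\mathbf{a}+\mathbf{b} \sim \mathbf{a}'+\mathbf{b}$; applying it again (using commutativity of $S$) with $\mathbf{c}=\mathbf{a}'$, we get $\mathbf{a}'+\mathbf{b} \sim \mathbf{a}'+\mathbf{b}'$. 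Transitivity of $\sim$ then yields $\mathbf{a}+\mathbf{b} \sim \mathbf{a}'+\mathbf{b}'$, so $[\mathbf{a}+\mathbf{b}] = [\mathbf{a}'+\mathbf{b}']$, as needed.

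Once well-definedness is in hand, the rest is immediate: associativity $([\mathbf{a}]+[\mathbf{b}])+[\mathbf{c}] = [(\mathbf{a}+\mathbf{b})+\mathbf{c}] = [\mathbf{a}+(\mathbf{b}+\mathbf{c})] = [\mathbf{a}]+([\mathbf{b}]+[\mathbf{c}])$ follows from associativity in $S$; commutativity is analogous; and the class $[\mathbf{0}]$ of the identity element of $S$ serves as the identity of $S/\!\sim$, since $[\mathbf{0}]+[\mathbf{a}] = [\mathbf{0}+\mathbf{a}] = [\mathbf{a}]$.

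The only step that genuinely uses a hypothesis beyond "equivalence relation" is the well-definedness of addition, and that is precisely where the additive closure condition of a congruence is invoked. No other obstacle arises, which explains why the proof in the excerpt is simply omitted with \texttt{\textbackslash qed}.
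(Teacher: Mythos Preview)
Your proposal is correct and is exactly the routine verification the paper has in mind; the paper simply omits all details and places a \qed after the statement, since the argument follows directly from Definition~\ref{def:congruence}.
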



Let $\phi : S \to S'$ be a monoid morphism.The \textbf{kernel of
  $\phi$} is defined as 
\[
\ker \phi := \big\{ (\mathbf{a}, \mathbf{b}) \in S \times S \mid
\phi(\mathbf{a}) = \phi(\mathbf{b}) \big\}.
\] 
Note that if $\phi$ is a monoid morphism, the relation on $S$
determined by $\ker \phi \subset S\times S$ is actually a
congruence. Moreover, every congruence on $S$ arises in this way:
if $\sim$ is a congruence on $S$, then $\sim$ can be recovered as the
congruence induced by the kernel of the natural surjection
$S \to S/\!\sim$.


We write $\mathrm{cong}(S) \subset \mathcal{P}(S \times S)$ for the
set of congruences on $S$ ordered by inclusion. (Here $\mathcal{P}$
indicates the power set.)
We say that $S$ is \textbf{Noetherian} if every nonempty subset of
$\mathrm{cong}(S)$  has a maximal element (equivalently,
$\mathrm{cong}(S)$  satisfies the ascending chain condition). 
The following is an important result in monoid theory.

\begin{theorem}
\label{thm:NoetherianMonoidCondition}
A commutative monoid $S$ is Noetherian if and only if $S$ is finitely generated.
\end{theorem}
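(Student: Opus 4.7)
My plan is to address the two directions separately; the key ingredient is the binomial ideal correspondence together with Hilbert's basis theorem for $(\Leftarrow)$, while $(\Rightarrow)$ is a direct combinatorial construction.

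For $(\Leftarrow)$, assume $S$ is generated by $n$ elements. The natural surjection $\pi \colon \mathbb{N}^n \to S$ induces, via the usual correspondence for quotients, an order-preserving bijection between $\mathrm{cong}(S)$ and the sublattice of $\mathrm{cong}(\mathbb{N}^n)$ of congruences containing $\ker\pi$. Hence it suffices to prove that $\mathbb{N}^n$ itself is Noetherian. For this I would associate to each congruence $\sim$ on $\mathbb{N}^n$ the binomial ideal
\[
I_\sim := \langle \mathbf{X}^{\mathbf{u}} - \mathbf{X}^{\mathbf{v}} \mid \mathbf{u} \sim \mathbf{v} \rangle \subset \Bbbk[\mathbf{X}].
\]
This assignment is plainly order-preserving, and the crux of the argument --- which I expect to be the main obstacle --- is that it is \emph{strictly} so, equivalently that $\sim$ can be recovered from $I_\sim$ via the rule $\mathbf{u} \sim \mathbf{v} \Leftrightarrow \mathbf{X}^\mathbf{u} - \mathbf{X}^\mathbf{v} \in I_\sim$. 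To verify this recoverability I would use a graph/cancellation argument in the spirit of the proof of Theorem~\ref{Prop1.11ES}: if $\mathbf{X}^\mathbf{u} - \mathbf{X}^\mathbf{v} = \sum_i \lambda_i \mathbf{X}^{\mathbf{p}_i}(\mathbf{X}^{\mathbf{a}_i} - \mathbf{X}^{\mathbf{b}_i})$ with $\mathbf{a}_i \sim \mathbf{b}_i$, the monomial cancellations that must occur in order to leave only two terms on the right-hand side force a chain of edges $\{\mathbf{p}_i + \mathbf{a}_i,\, \mathbf{p}_i + \mathbf{b}_i\}$ linking $\mathbf{u}$ to $\mathbf{v}$, and each such edge joins two $\sim$-equivalent vertices by additive closure. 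Granting this, an ascending chain of congruences on $\mathbb{N}^n$ yields an ascending chain of ideals in the Noetherian ring $\Bbbk[\mathbf{X}]$, which stabilizes by Hilbert's basis theorem; pulling back via the correspondence above shows that the original chain on $S$ stabilizes as well.

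For $(\Rightarrow)$, I would argue by contrapositive. Suppose $S$ is not finitely generated. Inductively choose $s_1, s_2, \ldots \in S$ with $s_{k+1} \notin M_k := \langle s_1, \ldots, s_k \rangle$, producing a strictly ascending chain of finitely generated submonoids of $S$ whose union is $S$. For each $k$, let $\sim_k$ be the congruence on $S$ generated by the pairs $\{(0, m) : m \in M_k\}$; concretely, $a \sim_k b$ precisely when $a$ and $b$ can be connected by a finite sequence in $S$ each of whose steps adjoins or removes an element of $M_k$. The inclusions $\sim_1 \subseteq \sim_2 \subseteq \cdots$ are immediate, and the pair $(0, s_{k+1})$ should witness strict containment at stage $k+1$, provided the $\sim_k$-class of $0$ does not extend beyond $M_k$ --- a point that is immediate when $S$ is cancellative, but requires more care in the noncancellative setting, where one may need to reduce to the cancellative quotient or adopt a different family of witnessing congruences.
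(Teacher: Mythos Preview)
Your argument for $(\Leftarrow)$ is correct and is essentially the paper's own approach. The paper also pulls congruences on $S$ back to congruences on $\mathbb{N}^n$ and then passes to binomial ideals in $\Bbbk[\mathbf{X}]$; the injectivity step you flag as ``the crux'' is exactly what Theorem~\ref{Th 1.1} provides (that $I_\mathcal{A}$ is spanned as a vector space by the pure differences $\mathbf{X}^{\mathbf u}-\mathbf{X}^{\mathbf v}$ with $\mathbf u \sim \mathbf v$), and your proposed graph/cancellation argument is the same idea.

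For $(\Rightarrow)$, note first that the paper does \emph{not} prove this direction: it calls it ``the hard part'', attributes it to Budach, and points to Gilmer and Brookfield for proofs. Your sketch, however, has a genuine gap, and it already fails in the cancellative case. The congruence $\sim_k$ generated by $\{(0,m):m\in M_k\}$ satisfies $0 \sim_k s$ if and only if $s + m' \in M_k$ for some $m' \in M_k$; equivalently, in a cancellative $S$, if and only if $s$ lies in $G(M_k)\cap S$, where $G(M_k)$ is the subgroup of $G(S)$ generated by $M_k$. There is no reason for $G(M_k)\cap S$ to equal $M_k$. Concretely, take
\[
S = \{(0,0)\}\cup\{(a,b)\in\mathbb{N}^2 : a\ge 1\},
\]
a cancellative, non--finitely generated submonoid of $\mathbb{Z}^2$ (every $(1,n)$ is an atom). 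With $s_1=(1,0)$ and $s_2=(1,1)$ one has $M_2=\{(a,b):0\le b\le a\}$, yet $(1,2)+(1,0)=(2,2)\in M_2$, so $0\sim_2 (1,2)$ even though $(1,2)\notin M_2$. In fact $G(M_2)=\mathbb{Z}^2$, so $\sim_2$ is already the total congruence and your chain collapses immediately, regardless of how the later $s_k$ are chosen. Thus your witnessing family $\{\sim_k\}$ does not do the job even under the cancellative hypothesis you single out as the easy case; the result genuinely requires a different idea.
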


The fact that a Noetherian monoid is finitely generated is the hard part of the proof. It is due to
Budach~\cite{Budach}, and is the main result in Chapter~5 in Gilmer's
book~\cite{Gilmer}, where it appears as Theorem~5.10. Brookfield has
given a short and self contained proof in~\cite{Brookfield}.  
We will just provide a proof of the converse, namely, that finitely generated
monoids are Noetherian  (see \cite[Theorem 7.4]{Gilmer}), after Theorem~\ref{Th 1.1}.


Set $S$ be a commutative monoid finitely generated by $\mathcal{A} =
\{\mathbf{a}_1, \ldots, \mathbf{a}_n\}$. The monoid
morphism \begin{equation}\label{ecu1} \pi : \mathbb{N}^n
  \longrightarrow S;\ \mathbf{e}_i \longmapsto  \mathbf{a}_i,\ i = 1,
  \ldots, n,\end{equation} where $\mathbf{e}_i$ denotes the element in
$\mathbb{N}^n$ whose $i-$th coordinate is $1$ with all other
coordinates $0$, is surjective and gives a \textbf{presentation} $$S =
\mathbb{N}^n/\! \sim$$ by simply taking $\sim = \ker \pi$. Unless
stated otherwise, we write $[\mathbf{u}]$ for the class of $\mathbf{u}
\in \mathbb{N}^n$ modulo $\sim$. 

\begin{remark}
In what follows, all monoids considered are commutative and finitely generated.
\end{remark}

Given a monoid $S$, the \textbf{semigroup algebra} $\Bbbk[S] :=
\bigoplus_{\mathbf{a} \in S} {\rm Span}_\Bbbk \{ \chi^\mathbf{a}\}$ is the direct sum
with multiplication $\chi^\mathbf{a} \chi^\mathbf{b} =
\chi^{\mathbf{a}+\mathbf{b}}$. (This terminology is in wide use, even
though the algebra $\Bbbk[S]$ would be more
precisely named a ``monoid algebra''.)

\begin{theorem}\label{Th 1.1}
Let $\mathcal{A} =\{\mathbf{a}_1,\dots,\mathbf{a}_n\}$ be a generating
set of a monoid $S$, and consider the
presentation map $\pi:\mathbb{N}^n \to S$  induced by $\mathcal{A}$. We define a map of
semigroup algebras
\begin{equation}\label{ecu2a}
\hat\pi : \Bbbk[\mathbb{N}^n] = \Bbbk[\mathbf{X}]  \to  \Bbbk[S] \ ; \quad
\mathbf{X}^\mathbf{u}  \mapsto  \chi^{\pi(\mathbf{u})}.  
\end{equation}
Let
\begin{equation}\label{ecu2} 
I_{\mathcal{A}}
:=  
\langle \mathbf{X}^\mathbf{u} - \mathbf{X}^\mathbf{v}\ \mid \
\pi(\mathbf{u}) = \pi(\mathbf{v}) \rangle 
\subseteq  
\Bbbk[\mathbf{X}].
\end{equation}
Then $\ker \hat\pi = I_{\mathcal{A}}$, so that
$\Bbbk[S]  \cong \Bbbk[\mathbf{X}]/I_{\mathcal{A}}$. Moreover,
$I_{\mathcal{A}}$ is spanned as a $\Bbbk-$vector space by 
$\{\mathbf{X}^\mathbf{u} - \mathbf{X}^\mathbf{v} \mid  \pi(\mathbf{u}) = \pi(\mathbf{v})\}$.
\end{theorem}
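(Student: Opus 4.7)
The plan is to identify $\ker\hat\pi$ with $I_{\mathcal{A}}$, from which the isomorphism $\Bbbk[S] \cong \Bbbk[\mathbf{X}]/I_{\mathcal{A}}$ will follow from the first isomorphism theorem (noting $\hat\pi$ is surjective because $\mathcal{A}$ generates $S$, so every $\chi^{\mathbf{a}}$ is hit by some monomial). I will handle the two inclusions separately, and along the way establish the $\Bbbk$-vector space spanning claim, which essentially encodes the nontrivial inclusion.

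The containment $I_{\mathcal{A}} \subseteq \ker\hat\pi$ is immediate: if $\pi(\mathbf{u}) = \pi(\mathbf{v})$, then $\hat\pi(\mathbf{X}^{\mathbf{u}} - \mathbf{X}^{\mathbf{v}}) = \chi^{\pi(\mathbf{u})} - \chi^{\pi(\mathbf{v})} = 0$, and the kernel is an ideal.

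Next I would establish the spanning statement. A generic element of $I_{\mathcal{A}}$ has the form $\sum_j g_j\,(\mathbf{X}^{\mathbf{u}_j} - \mathbf{X}^{\mathbf{v}_j})$ with $\pi(\mathbf{u}_j) = \pi(\mathbf{v}_j)$. Writing each $g_j = \sum_k \lambda_{jk} \mathbf{X}^{\mathbf{w}_{jk}}$ and distributing, each term becomes $\lambda_{jk}(\mathbf{X}^{\mathbf{w}_{jk}+\mathbf{u}_j} - \mathbf{X}^{\mathbf{w}_{jk}+\mathbf{v}_j})$. Since $\pi$ is a monoid morphism, $\pi(\mathbf{w}_{jk}+\mathbf{u}_j) = \pi(\mathbf{w}_{jk}+\mathbf{v}_j)$, so this difference is again of the prescribed form. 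Hence $I_{\mathcal{A}}$ is $\Bbbk$-spanned by $\{\mathbf{X}^{\mathbf{u}} - \mathbf{X}^{\mathbf{v}} \mid \pi(\mathbf{u}) = \pi(\mathbf{v})\}$.

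For the reverse inclusion $\ker\hat\pi \subseteq I_{\mathcal{A}}$, which is the only step requiring real thought, I would take $f = \sum_{i=1}^r \lambda_i \mathbf{X}^{\mathbf{u}_i} \in \ker\hat\pi$ with distinct monomials and $\lambda_i \in \Bbbk^*$. Then $0 = \hat\pi(f) = \sum_i \lambda_i \chi^{\pi(\mathbf{u}_i)}$. Since the set $\{\chi^{\mathbf{a}} \mid \mathbf{a} \in S\}$ is a $\Bbbk$-basis of $\Bbbk[S]$, I group the indices $\{1,\dots,r\}$ into equivalence classes $C_1,\dots,C_t$ according to the value of $\pi(\mathbf{u}_i)$; for each class $C_\ell$, the total coefficient $\sum_{i \in C_\ell} \lambda_i$ must vanish. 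Fixing a representative $i_\ell \in C_\ell$, I rewrite the contribution of that class as
\[
\sum_{i \in C_\ell} \lambda_i \mathbf{X}^{\mathbf{u}_i} \;=\; \sum_{i \in C_\ell,\, i \ne i_\ell} \lambda_i\,\bigl(\mathbf{X}^{\mathbf{u}_i} - \mathbf{X}^{\mathbf{u}_{i_\ell}}\bigr),
\]
where I used $\lambda_{i_\ell} = -\sum_{i \ne i_\ell,\, i\in C_\ell} \lambda_i$. Each binomial on the right lies in $I_{\mathcal{A}}$ by construction, so summing over $\ell$ yields $f \in I_{\mathcal{A}}$. Combined with the easy inclusion this gives $\ker\hat\pi = I_{\mathcal{A}}$ and hence the asserted isomorphism.

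The only conceptual obstacle is the rewriting step inside each equivalence class, which relies crucially on the $\Bbbk$-linear independence of the $\chi^{\mathbf{a}}$ in $\Bbbk[S]$ to force the coefficient sums to vanish class by class; everything else is bookkeeping with the monoid morphism property of $\pi$.
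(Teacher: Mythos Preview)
Your proof is correct and follows essentially the same strategy as the paper: both arguments hinge on decomposing an element of $\ker\hat\pi$ according to the $S$-degree of its monomials (you phrase this via the $\Bbbk$-basis $\{\chi^{\mathbf{a}}\}$ of $\Bbbk[S]$, the paper via the $S$-grading on $\Bbbk[\mathbf{X}]$), and then rewriting each homogeneous piece as a $\Bbbk$-combination of pure differences. The only noteworthy difference is in the spanning claim: the paper argues that every binomial in $I_{\mathcal{A}}$ must already be a pure difference $\mathbf{X}^{\mathbf{u}}-\mathbf{X}^{\mathbf{v}}$ (using that $I_{\mathcal{A}}$ is $S$-graded and monomial-free), whereas your direct expansion of $\sum_j g_j(\mathbf{X}^{\mathbf{u}_j}-\mathbf{X}^{\mathbf{v}_j})$ is more elementary and avoids that detour.
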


\begin{proof}
By construction, $I_{\mathcal{A}} \subseteq \ker \hat\pi$. To prove the other inclusion,
give $\Bbbk[\mathbf{X}]$ an $S-$grading by setting $\deg(X_i) =
\pi(\mathbf{e}_i)=\mathbf{a}_i$. Then the map $\hat \pi$ is graded (considering
$\Bbbk[S]$ with its natural $S-$grading), and therefore its
kernel is a homogeneous ideal of $\Bbbk[\mathbf{X}]$.
Note that 
$\mathbf{X}^{\mathbf{u}}$ and $\mathbf{X}^{\mathbf{v}}$ have the same
$S-$degree if and only if $\pi(\mathbf{u})=\pi(\mathbf{v})$.

We observe that $\ker\hat\pi$ contains no monomials, so any polynomial in
$\ker\hat\pi$ has at least two terms. 
Let $f$ be a homogeneous element of $\ker\hat\pi$.  
Then there are $\lambda,\mu \in \Bbbk^*$ and
$\mathbf{u},\mathbf{v} \in \mathbb{N}^n$ such that $f=\lambda
\mathbf{X}^{\mathbf{u}} + \mu \mathbf{X}^{\mathbf{v}} + g$, with $g$ a
homogeneous polynomial with two fewer terms than $f$. Since $f$ is
homogeneous, we have that $\pi(\mathbf{u})=\pi(\mathbf{v})$, and therefore
$\mathbf{X}^{\mathbf{u}}-\mathbf{X}^{\mathbf{v}} \in I_{\mathcal{A}} \subset
\ker \hat\pi$. Then
$f-\lambda(\mathbf{X}^{\mathbf{u}}-\mathbf{X}^{\mathbf{v}})$ is a
homogeneous element of $\ker\hat\pi$, and has fewer terms than
$f$. Continuing in this manner, we conclude that $f \in I_{\mathcal{A}}$. Since
$\ker\hat\pi$ is a homogeneous ideal, we see that $I_{\mathcal{A}}
\supseteq \ker \hat\pi$, and therefore $I_{\mathcal{A}}=\ker \hat\pi$.

For the final statement, we note that any binomial ideal in $\Bbbk[\mathbf{X}]$ is spanned as
a $\Bbbk-$vector space by the set of all of its binomials. Since
$I_{\mathcal{A}}$ contains no monomials and is $S-$graded, any binomial in
$I_{\mathcal{A}}$ is of the form $\mathbf{X}^{\mathbf{u}}-\lambda
\mathbf{X}^{\mathbf{v}}$, where $\lambda \in \Bbbk^*$ and
$\pi(\mathbf{u})=\pi(\mathbf{v})$. But then $\mathbf{X}^{\mathbf{u}}-
\mathbf{X}^{\mathbf{v}} \in I_{\mathcal{A}}$, and again using that
$I_{\mathcal{A}}$ contains no monomials, we see that $\lambda=1$. This
implies that 
$\{\mathbf{X}^\mathbf{u} - \mathbf{X}^\mathbf{v} \mid  \pi(\mathbf{u}) =
\pi(\mathbf{v})\}$
is the set of all binomials of $I_{\mathcal{A}}$, which implies that
it is a $\Bbbk-$spanning set for this ideal.
%
%
%
\end{proof}

We are now ready to prove that finitely generated monoids are Noetherian.

\begin{proof}[Proof of
  Theorem~\ref{thm:NoetherianMonoidCondition}, reverse implication]
Let $S$ be a finitely generated monoid, and consider a presentation $S
= \mathbb{N}^n/\! \sim$, where $\sim$ is a congruence on
$\mathbb{N}^n$. In this proof, for $\mathbf{u} \in \mathbb{N}^n$, 
we denote by $[\mathbf{u}]$ the equivalence class of $\mathbf{u}$ with
respect to $\sim$.

Let $\approx$ be a congruence on $S$, and let $\simeq$ be the congruence on
$\mathbb{N}^n$ given by setting the equivalence class of $\mathbf{u}
\in \mathbb{N}^n$ with respect to $\simeq$ to be the set 
$\bigcup_{\{\mathbf{v} \in \mathbb{N}^n \mid [\mathbf{u}] \approx [\mathbf{v}]\}} [\mathbf{v}]$. 
Then the congruence $\simeq$ is such that $S/\!\approx\ = \mathbb{N}^n/\!\simeq$.

Now let $\approx_1$ and $\approx_2$ be two congruences on $S$ and consider
the natural surjections $\pi_i : \mathbb{N}^n \to
\mathbb{N}^n/\! \simeq_i$ for $i=1,2$. Then if $\approx_1 \ \subseteq \ \approx_2$
(as subsets of $S\times S$), we have that $I_{\mathcal{A}_1} \subseteq
I_{\mathcal{A}_2}$, where these ideals are defined as in~\eqref{ecu2} by considering the generating sets $\mathcal A_i = \{ \pi_i(\mathbf{e}_j)\ \mid\ j=1, \ldots, n\},\ i = 1, 2,$ respectively. We
conclude that Noetherianity of the monoid $S$ follows from the fact
that $\Bbbk[\mathbf{X}]$ is a Noetherian ring.
\end{proof}

In order to continue to explore the correspondence between congruences and
binomial ideals, we introduce some terminology.

\begin{definition}
A binomial ideal is said to be \textbf{unital} if it is generated by
binomials of the form $\mathbf{X}^\mathbf{u} - \lambda
\mathbf{X}^\mathbf{v}$ with $\lambda$ equal to either $0$ or $1$. A
binomial ideal is said to be \textbf{pure} if does not contain any
monomial. 
\end{definition}

\begin{corollary}\label{Cor 11}
A relation $\sim$ on $\mathbb{N}^n$ is a congruence if and only if
there exists a pure unital ideal $I \subset \Bbbk[\mathbf{X}]$ such
that $\mathbf{u} \sim \mathbf{v} \Longleftrightarrow
\mathbf{X}^\mathbf{u} -  \mathbf{X}^\mathbf{v} \in I$. 
\end{corollary}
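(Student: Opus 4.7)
The plan is to treat the two directions separately and reduce the forward direction to Theorem~\ref{Th 1.1}, while verifying the reverse direction directly from the ring axioms.

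For the forward implication, suppose $\sim$ is a congruence on $\mathbb{N}^n$. I would form the quotient monoid $S := \mathbb{N}^n/\!\sim$, which is finitely generated by $\mathcal{A} = \{[\mathbf{e}_1], \ldots, [\mathbf{e}_n]\}$, and take $\pi : \mathbb{N}^n \to S$ to be the natural surjection. Theorem~\ref{Th 1.1} then produces the ideal $I_{\mathcal{A}} = \langle \mathbf{X}^\mathbf{u} - \mathbf{X}^\mathbf{v} \mid \pi(\mathbf{u}) = \pi(\mathbf{v}) \rangle$, which is unital by construction (its generators have the form required in the definition, with $\lambda = 1$) and pure because $\ker \hat\pi$ contains no monomials (the map $\hat\pi$ sends each monomial $\mathbf{X}^\mathbf{u}$ to the nonzero element $\chi^{\pi(\mathbf{u})}$). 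Finally, the desired biconditional $\mathbf{u} \sim \mathbf{v} \Leftrightarrow \mathbf{X}^\mathbf{u} - \mathbf{X}^\mathbf{v} \in I_{\mathcal{A}}$ follows directly from the definition of $\sim$ (as $\ker \pi$) together with the spanning statement at the end of Theorem~\ref{Th 1.1}.

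For the reverse implication, suppose $I$ is pure and unital, and define $\mathbf{u} \sim \mathbf{v}$ to mean $\mathbf{X}^\mathbf{u} - \mathbf{X}^\mathbf{v} \in I$. I would verify the four properties of a congruence by elementary ideal manipulations: reflexivity from $\mathbf{X}^\mathbf{u} - \mathbf{X}^\mathbf{u} = 0 \in I$; symmetry from $\mathbf{X}^\mathbf{v} - \mathbf{X}^\mathbf{u} = -(\mathbf{X}^\mathbf{u} - \mathbf{X}^\mathbf{v}) \in I$; transitivity from $(\mathbf{X}^\mathbf{u} - \mathbf{X}^\mathbf{v}) + (\mathbf{X}^\mathbf{v} - \mathbf{X}^\mathbf{w}) = \mathbf{X}^\mathbf{u} - \mathbf{X}^\mathbf{w} \in I$; and additive compatibility from $\mathbf{X}^\mathbf{c}(\mathbf{X}^\mathbf{u} - \mathbf{X}^\mathbf{v}) = \mathbf{X}^{\mathbf{u}+\mathbf{c}} - \mathbf{X}^{\mathbf{v}+\mathbf{c}} \in I$. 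Notice that purity and unitality of $I$ are not needed for the fact that $\sim$ is a congruence — they are needed to guarantee the biconditional is the one induced by the stated shape of the binomial, and for consistency with the forward direction.

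The only subtle point, and hence the main obstacle, lies in the forward direction: one must confirm that $\mathbf{X}^\mathbf{u} - \mathbf{X}^\mathbf{v} \in I_{\mathcal{A}}$ forces $\pi(\mathbf{u}) = \pi(\mathbf{v})$, not merely the converse. This is where the homogeneity argument in Theorem~\ref{Th 1.1} earns its keep: because $I_{\mathcal{A}}$ is $S$-graded and contains no monomials, any binomial $\mathbf{X}^\mathbf{u} - \mathbf{X}^\mathbf{v}$ in it must have both monomials of the same $S$-degree, which is exactly the condition $\pi(\mathbf{u}) = \pi(\mathbf{v})$. Once this is in hand, the corollary is immediate.
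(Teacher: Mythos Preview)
Your proposal is correct and follows essentially the same approach as the paper: the forward direction invokes Theorem~\ref{Th 1.1} applied to $S = \mathbb{N}^n/\!\sim$ with $\mathcal{A} = \{[\mathbf{e}_1],\dots,[\mathbf{e}_n]\}$, and the reverse direction verifies the congruence axioms by the same elementary ideal manipulations. Your explicit discussion of why $\mathbf{X}^\mathbf{u} - \mathbf{X}^\mathbf{v} \in I_{\mathcal{A}}$ forces $\pi(\mathbf{u}) = \pi(\mathbf{v})$ is a welcome clarification of what the paper packages under ``by Theorem~\ref{Th 1.1} and its proof.''
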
 

\begin{proof}
If $\sim$ is a congruence on $\mathbb{N}^n$, then
$\mathbb{N}^n/\!\sim$ is a (finitely generated) monoid. Consider the
natural surjection $\pi:\mathbb{N}^n\to\mathbb{N}^n/\! \sim$, and let
$\mathcal{A}=\{\pi(\mathbf{e}_1), \dots, \pi(\mathbf{e}_n)\}$.
Use this information to construct $I_{\mathcal{A}}$ as in~\eqref{ecu2}. By
Theorem~\ref{Th 1.1} and its proof, the ideal $I_{\mathcal{A}}$ satisfies the required
conditions.

For the converse, let $I$ a pure unital ideal of
$\Bbbk[\mathbf{X}]$ such that 
$\mathbf{u} \sim \mathbf{v} \Longleftrightarrow \mathbf{X}^\mathbf{u} -  \mathbf{X}^\mathbf{v}
\in I$. 
Clearly, $\sim$ is reflexive and symmetric. For transitivity,
it suffices to observe that 
$\mathbf{X}^\mathbf{u} - \mathbf{X}^\mathbf{w} = (\mathbf{X}^\mathbf{u} -
\mathbf{X}^\mathbf{v}) + (\mathbf{X}^\mathbf{v} -
\mathbf{X}^\mathbf{w}) \in I$, for every $\mathbf{u}, \mathbf{v}$ and
$\mathbf{w}$ such that $\mathbf{u} \sim \mathbf{v}$ and $\mathbf{v}\sim \mathbf{w}$.
Finally, as $I$ is an ideal, it follows that
$\mathbf{X}^\mathbf{w} (\mathbf{X}^\mathbf{u} -
\mathbf{X}^\mathbf{v})  = \mathbf{X}^{\mathbf{u}+\mathbf{w}} -
\mathbf{X}^{\mathbf{v}+\mathbf{w}} \in I$, for every
$\mathbf{X}^\mathbf{u} - \mathbf{X}^\mathbf{v} \in I$ and
$\mathbf{X}^\mathbf{w} \in \Bbbk[\mathbf{X}]$. We conclude that $\sim$ is a
congruence. 
%
\end{proof}

We review some examples of pure unital binomial ideals and their associated
congruences. We remark in particular that different binomial ideals
may give rise to the same congruence.

\begin{example}
\label{ejem10}
\mbox{}\par
\begin{enumerate}
\item[(i)] 
The ideal $I = \langle X-Y \rangle \subset \Bbbk[X,Y]$ defines a
congruence $\sim$ on $\mathbb{N}^2$ with $\mathbb{N}^2/\!\sim =
\mathbb{N}$. 
\item
The ideal $I = \langle X-Y,Y^2-1 \rangle \subset \Bbbk[X,Y]$ defines a congruence $\sim$
on $\mathbb{N}^2$ such that $\mathbb{N}^2/\!\sim =
\mathbb{Z}/2\mathbb{Z}$. 
\item
The ideal $I = \langle X^2-Y^2 \rangle \subset \Bbbk[X,Y]$ defines a
congruence $\sim$ on $\mathbb{N}^2$ such that $\mathbb{N}^2/\!\sim$ is
isomorphic to the submonoid $S$ of $\mathbb{Z} \oplus
\mathbb{Z}/2\mathbb{Z}$ generated by $(1,0)$ and $(1,1)$. 
\item
\label{item{iv}}
Consider the monoid $S = \{0,a,b\}$  where the sum is defined as follows:
$$
\begin{array}{c|c|c|c|}
+ & 0 & a & b \\
\hline
0 & 0 & a & b \\
\hline
a & a &b & b \\
\hline
b & b & b & b \\
\hline
\end{array}
$$
The ideal $I = \langle X-Y,Y^3-Y^2 \rangle \subset \Bbbk[X,Y]$ determines a congruence $\sim$ on  $\mathbb{N}^2$ such that $S \cong \mathbb{N}^2/\!\sim$.
\end{enumerate}
\end{example}

An arbitrary binomial ideal $J$ of $\Bbbk[\mathbf{X}]$ induces a congruence
$\sim_J$ on $\mathbb{N}^n$ defined as 
\begin{equation}\label{simI}
\mathbf{u} \sim_J \mathbf{v} \Longleftrightarrow \text{there exists }
\lambda \in \Bbbk^* \text{ such that } \mathbf{X}^\mathbf{u}
- \lambda \mathbf{X}^\mathbf{v} \in J.
\end{equation}
Note that this ideal defines the same congruence as the pure unital binomial ideal
\[
I = \langle \mathbf{X}^\mathbf{u} - \mathbf{X}^\mathbf{v} \mid
\text{there exists } \lambda \in \Bbbk^* \text{ such that } 
\mathbf{X}^\mathbf{u} - \lambda \mathbf{X}^\mathbf{v} \in J \rangle.
\] 

\begin{example}
\label{ejem10b}
\mbox{}\par
\begin{enumerate}
\item Let $J = \langle X-Y,Y^2 \rangle \subset \Bbbk[X,Y]$. The congruence $\sim_J$ induced by $J$ on $\mathbb{N}^2$ is exactly the same that one in Example \ref{ejem10} \ref{item{iv}}.
\item The congruence $\sim_{\langle X,Y \rangle}$  on $\mathbb{N}^2$ is the same as the induced by $I=\langle X-Y,X-X^2\rangle$ on $\mathbb{N}^2$. Note that $\langle X,Y \rangle$ is a monomial ideal, while $I$ contains no monomials.
\end{enumerate}
\end{example}

If a binomial ideal $I$ contains monomials, then the exponents of all
monomials in $I$ form a single equivalence class in the congruence
$\sim_I$. This
equivalence class satisfies an absorption property, as in the
definition below.

\begin{definition}
\label{def:nil}
A non-identity element $\infty$ in a monoid $S$ is \textbf{nil} if
$\mathbf{a} + \infty = \infty$, for all $\mathbf{a} \in S$. 
\end{definition}

For example, the ``formal'' element $\infty$ introduced in
Remark~\ref{remark:insteadOfLema1.1} is nil, since it corresponds to
the monomial class. Note that a monoid $S$ can have at most one nil
element: if $\infty, \infty' \in S$ are both nil, then
$\infty+\infty'=\infty'$ because $\infty'$ is nil, and 
$\infty'+\infty=\infty$ because $\infty$ is nil. Since $S$ is
commutative, $\infty=\infty'$.

As we have noted above, if $I$ is a binomial ideal that contains
monomials, then the class of monomial exponents is a nil element for
the congruence $\sim_I$. The converse of this assertion is false: if
$J$ is a binomial ideal containing monomials, then the ideal $I$
produced by Corollary~\ref{Cor 11} for the congruence $\sim_J$ has no
monomials and has a nil element (since $J$ contains monomials, and
therefore $\sim_J$ does). On the other hand, if $\sim$ is a congruence
on $\mathbb{N}^n$ with a nil element $\infty$, then there exists a binomial
ideal $J$ in $\Bbbk[\mathbf{X}]$ that contains monomials, and such
that $\sim = \sim_J$. To see this, let $I$ be the ideal produced by
Corollary~\ref{Cor 11} for $\sim$, and consider 
$J=I + \langle \mathbf{X}^{\mathbf{e}} \mid [\mathbf{e}] = \infty
\rangle$, noting that adding this particular monomial ideal does not
change the underlying congruence. We make this more precise in
Proposition~\ref{Prop 9.5KM}.

%
%
%
%
%
%
%

\begin{proposition}\label{Prop 9.5KM}
Let $I \subset \Bbbk[\mathbf{X}]$ be a binomial ideal. If $J$ is a
binomial ideal of $ \Bbbk[\mathbf{X}]$ such that $I \subset J$ and
$\sim_J=\sim_I$, then $\mathbb{N}^n/\!\sim_I$ has a nil $\infty$ and $J = I+\langle \mathbf{X}^\mathbf{e} \mid [\mathbf{e}] = \infty \rangle$.
\end{proposition}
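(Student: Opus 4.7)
The plan is to exploit the hypothesis $I \subsetneq J$ (interpreting $\subset$ as strict, since otherwise the nil need not exist) to force monomials into $J$, then to identify the exponents of those monomials with a nil class $\infty$ in $\mathbb{N}^n/\!\sim_I$, and finally to conclude that $J$ is obtained from $I$ by adjoining precisely these monomials. The single workhorse throughout is a \emph{coefficient-matching} move: whenever $f = \mathbf{X}^{\mathbf{u}} - \lambda \mathbf{X}^{\mathbf{v}} \in J$ with $\lambda \in \Bbbk^*$, the relation $\mathbf{u} \sim_J \mathbf{v}$ and the hypothesis $\sim_J = \sim_I$ produce a binomial $g = \mathbf{X}^{\mathbf{u}} - \mu \mathbf{X}^{\mathbf{v}} \in I$ with $\mu \in \Bbbk^*$, and $f - g = (\mu - \lambda)\mathbf{X}^{\mathbf{v}}$ already lies in $J$.

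First I would prove that $J$ contains a monomial. A binomial ideal is $\Bbbk$-spanned by its binomials, so I may pick a binomial $f \in J \setminus I$; if $f$ is not already a monomial, then the coefficient-matching move applies, and $\mu = \lambda$ would give $f = g \in I$, contradicting the choice of $f$. Hence $\mu \neq \lambda$ and $\mathbf{X}^{\mathbf{v}} \in J$. Setting $N = \{\mathbf{e} \in \mathbb{N}^n \mid \mathbf{X}^{\mathbf{e}} \in J\}$, the set $N$ is nonempty, closed under translation by arbitrary $\mathbf{w} \in \mathbb{N}^n$ (because $J$ is an ideal), and any two of its elements are linked by $\mathbf{X}^{\mathbf{e}} - \mathbf{X}^{\mathbf{e}'} \in J$. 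Thus $N$ projects to a single class in $\mathbb{N}^n/\!\sim_I = \mathbb{N}^n/\!\sim_J$, which is absorbing and hence a nil $\infty$. I would then check $N = \{\mathbf{e} \mid [\mathbf{e}] = \infty\}$: one direction is by construction; for the other, $[\mathbf{e}] = \infty$ yields $\mathbf{e} \sim_I \mathbf{e}'$ for some $\mathbf{e}' \in N$, and the resulting binomial $\mathbf{X}^{\mathbf{e}} - \lambda \mathbf{X}^{\mathbf{e}'} \in J$ together with $\mathbf{X}^{\mathbf{e}'} \in J$ force $\mathbf{X}^{\mathbf{e}} \in J$.

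Setting $K = I + \langle \mathbf{X}^{\mathbf{e}} \mid [\mathbf{e}] = \infty \rangle$, the inclusion $K \subseteq J$ follows from the characterization of $N$ together with $I \subseteq J$. For $J \subseteq K$, I again reduce to binomials: a pure monomial in $J$ has its exponent in $N$ and lies in $K$; a binomial $f = \mathbf{X}^{\mathbf{u}} - \lambda \mathbf{X}^{\mathbf{v}} \in J$ is rewritten, using the same $g \in I$ as before, as $f = g + (\mu - \lambda) \mathbf{X}^{\mathbf{v}}$ with $g \in I \subseteq K$, and the correction term is either zero (when $\mu = \lambda$) or a nonzero multiple of a monomial forced into $J$, hence into $K$. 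The main obstacle is organizational rather than technical: it is noticing that the \emph{same} coefficient-matching move $f - g$ drives three different steps --- producing a monomial in $J$, identifying the nil class via $N$, and reducing an arbitrary binomial of $J$ to elements of $K$ --- and that the strict containment $I \subsetneq J$ is precisely what guarantees this move never degenerates.
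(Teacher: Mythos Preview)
Your argument is correct and follows essentially the same route as the paper's proof: pick a binomial in $J \setminus I$, use $\sim_I = \sim_J$ to force a monomial into $J$, identify the resulting monomial class with the nil, and then check both inclusions $J \supseteq K$ and $J \subseteq K$ via the same mechanism. The paper compresses your coefficient-matching move into the single sentence ``Since $\sim_I = \sim_J$, necessarily $\mathbf{X}^{\mathbf{u}}, \mathbf{X}^{\mathbf{v}} \in J$'', whereas you spell it out and organize the monomial exponents into the set $N$; but the underlying idea and the order of the steps are the same.
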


\begin{proof}
As $I \subset J$, there is a binomial $\mathbf{X}^\mathbf{u} -
\lambda \mathbf{X}^\mathbf{v} \in J \setminus I$. Since $\sim_I =
\sim_J,$ necessarily $\mathbf{X}^{\mathbf{u}},  \mathbf{X}^{\mathbf{v}}
\in J$; in particular $\mathbb{N}^n/\!\sim_J = \mathbb{N}^n/\!\sim_I$
has a nil $\infty$. 
We claim that the ideal $J$ is equal to $I + \langle \mathbf{X}^{\mathbf{e}} \mid
[\mathbf{e}]=\infty\rangle$. To see that $J$ contains 
$I + \langle \mathbf{X}^{\mathbf{e}} \mid [\mathbf{e}]=\infty\rangle$, we note
that $I \subset J$. Also, we know that $J$ contains a monomial
$\mathbf{X}^{\mathbf{u}}$, and so $[\mathbf{u}]=\infty$. If $\mathbf{e} \in
\mathbb{N}^n$ is such that $[\mathbf{e}]=\infty = [\mathbf{u}]$, then 
$\mathbf{X}^{\mathbf{u}} - \mu \mathbf{X}^{\mathbf{e}} \in J$ for
some $\mu \in \Bbbk^*$, and since $\mathbf{X}^{\mathbf{u}} \in J$, we
see that $\mathbf{X}^{\mathbf{e}}\in J$. For the reverse inclusion, it
is enough to see that any binomial in $J$ belongs to 
$I + \langle \mathbf{X}^{\mathbf{e}} \mid [\mathbf{e}]=\infty\rangle$.
But as before, if $\mathbf{X}^\mathbf{u} -
\lambda \mathbf{X}^\mathbf{v} \in J \setminus I$, then 
$\mathbf{X}^{\mathbf{u}},  \mathbf{X}^{\mathbf{v}} \in J$, and
therefore $[{\mathbf{u}}]=[{\mathbf{v}}]=\infty$, because a monoid can
have at most one nil element.
%
%
%
\end{proof}

A \textbf{monoid ideal} $E$ of $\mathbb{N}^n$ is a proper subset such that $E
+ \mathbb{N}^n \subseteq E$; Figure~\ref{fig:monoidIdeal} shows a
typical example.

\begin{figure}[h]
\begin{center}
\includegraphics[scale=0.75]{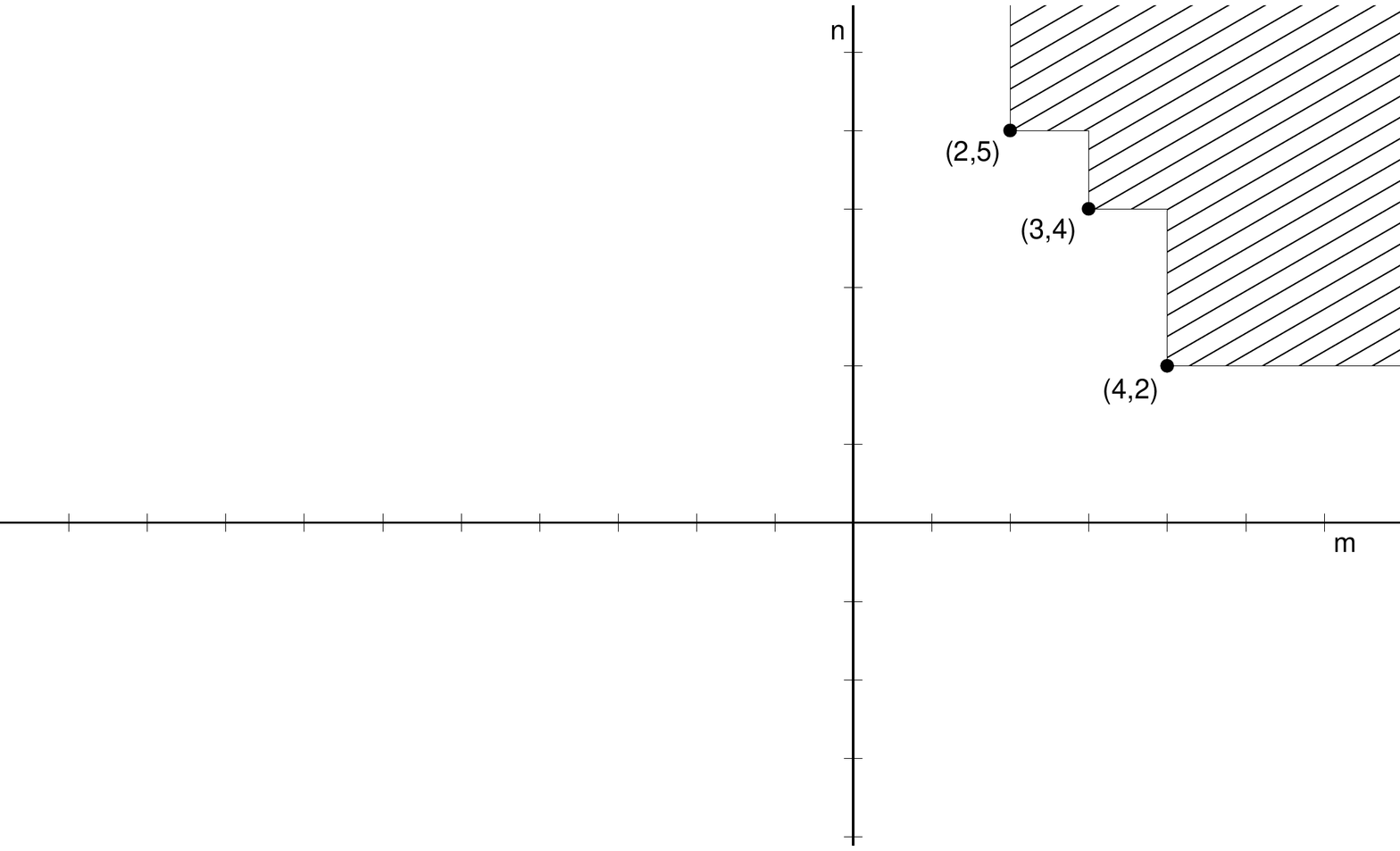}
\end{center}
\caption{The integer points in shaded area form a monoid ideal of $\mathbb{N}^2$.}
\label{fig:monoidIdeal}
\end{figure}

Let $E \subseteq \mathbb{N}^n$ be a
monoid ideal of $\mathbb{N}^n$. The \textbf{Rees congruence} on
$\mathbb{N}^n$ modulo $E$ is the correspondence $\sim$ on
$\mathbb{N}^n$ defined by 
$\mathbf{u} \sim \mathbf{v} \Longleftrightarrow \mathbf{u} =
\mathbf{v}$ 
or both $\mathbf{u}$ and $\mathbf{v} \in E$. Notice that the Rees
congruence on $\mathbb{N}^n$ modulo $E$ is the same as the induced by
$\sim_{M_E}$ with 
$M_E = \langle \mathbf{X}^\mathbf{e}\ \mid \mathbf{e} \in E \rangle$. 

Monoid ideals and nil elements are related as follows.

\begin{lemma}\label{Lemma above Prop 9.5KM}
Let $S$ be a monoid. Then $S$ has a nil element if and only if for any
presentation $\mathbb{N}^n/\!\sim$ of $S$ there exists a monoid ideal
$E$ of $\mathbb{N}^n$ such that $\sim$ contains the Rees congruence on 
$\mathbb{N}^n$ modulo $E$. In this case, $[\mathbf{e}] = \infty,$ for
any $\mathbf{e} \in E$. 
\end{lemma}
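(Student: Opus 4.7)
The proof is a direct translation, via the presentation map $\pi \colon \mathbb{N}^n \to S = \mathbb{N}^n/\!\sim$, between two descriptions of the same ``absorbing'' phenomenon: the nil element $\infty \in S$ corresponds to its preimage $E = \pi^{-1}(\infty) \subseteq \mathbb{N}^n$, which is exactly a monoid ideal. The plan is therefore to verify each direction by carrying this correspondence across $\pi$.

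For the forward implication, I would assume $S$ has a nil element $\infty$ and let $\pi\colon \mathbb{N}^n \to S$ be any presentation. Set $E := \{\mathbf{e} \in \mathbb{N}^n \mid [\mathbf{e}] = \infty\}$. Nonemptiness of $E$ follows from surjectivity of $\pi$. Properness follows because $[\mathbf{0}] = 0_S \neq \infty$ (as $\infty$ is non-identity by Definition~\ref{def:nil}), so $\mathbf{0} \notin E$. For the ideal property, if $\mathbf{e} \in E$ and $\mathbf{u} \in \mathbb{N}^n$, then
\[
[\mathbf{e}+\mathbf{u}] = [\mathbf{e}]+[\mathbf{u}] = \infty + [\mathbf{u}] = \infty,
\]
where the last equality is the absorption property defining nil. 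Hence $E + \mathbb{N}^n \subseteq E$. Now to see that $\sim$ contains the Rees congruence modulo $E$, take $\mathbf{u}\sim_E \mathbf{v}$: if $\mathbf{u}=\mathbf{v}$ then reflexivity suffices, and if both lie in $E$ then $[\mathbf{u}]=\infty=[\mathbf{v}]$, so $\mathbf{u}\sim \mathbf{v}$. The final clause $[\mathbf{e}]=\infty$ for $\mathbf{e}\in E$ is built into the definition of $E$.

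For the converse, I would start from a presentation $\pi\colon \mathbb{N}^n \to S$ and take the monoid ideal $E$ provided by the hypothesis. Since $E$ is nonempty and any two elements of $E$ are Rees-equivalent, they all have a common class in $S$; define $\infty := [\mathbf{e}]$ for any (equivalently, every) $\mathbf{e}\in E$. The monoid-ideal property gives the absorption: for any $[\mathbf{u}] \in S$,
\[
[\mathbf{u}] + \infty = [\mathbf{u}] + [\mathbf{e}] = [\mathbf{u}+\mathbf{e}] = \infty,
\]
where the last equality is because $\mathbf{u}+\mathbf{e}\in E$. To confirm that $\infty$ is non-identity, I observe that since $E$ is proper we must have $\mathbf{0} \notin E$ (otherwise $E \supseteq \mathbf{0}+\mathbb{N}^n = \mathbb{N}^n$), and therefore, excluding the trivial monoid where no nil can exist by Definition~\ref{def:nil}, the class $[\mathbf{0}] = 0_S$ is distinct from $\infty$. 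Thus $\infty$ is a bona fide nil element of $S$, and by construction $[\mathbf{e}]=\infty$ for every $\mathbf{e}\in E$.

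No step is technically difficult: the only point requiring care is the interplay between the ``non-identity'' clause in the definition of nil and the ``proper'' clause in the definition of a monoid ideal, and the observation above ($\mathbf{0} \in E$ would force $E = \mathbb{N}^n$) ties the two together cleanly.
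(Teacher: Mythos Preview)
Your proof is correct and follows essentially the same route as the paper's: in both directions you set $E = \pi^{-1}(\infty)$ (resp.\ $\infty = [\mathbf{e}]$) and verify the monoid-ideal property and the absorption property via $\pi(\mathbf{e}+\mathbf{u}) = \pi(\mathbf{e}) + \pi(\mathbf{u})$, exactly as the paper does. Your treatment of the non-identity clause is in fact a touch more careful than the paper's---the paper simply asserts that $[\mathbf{e}]\neq[\mathbf{0}]$ ``follows from the fact that $E\neq\mathbb{N}^n$'', whereas you correctly note that this step tacitly excludes the trivial monoid; your observation that $\mathbf{0}\in E$ would force $E=\mathbb{N}^n$ is the right link between properness and the non-identity requirement.
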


\begin{proof}


Let $\mathbb{N}^n/\!\sim$ be a presentation of $S$ given by a
monoid surjection $\pi :  \mathbb{N}^n \to S$. 

For the direct implication, assume that $\infty \in S$ is a nil.
Then $E := \pi^{-1}(\infty)$ is a monoid ideal of $\mathbb{N}^n$.
Indeed, given $\mathbf{u} \in \mathbb{N}^n$ and $\mathbf{e} \in E$ we
have that $$\pi(\mathbf{e} + \mathbf{u}) = \pi(\mathbf{e}) +
\pi(\mathbf{u}) = \infty +  \pi(\mathbf{u}) = \infty,$$ so that
$\mathbf{e} + \mathbf{u} \in E$. 
Note that $E \neq \mathbb{N}^n$ since nil elements are nonzero.
Moreover, by construction, if $\mathbf{e}, \mathbf{e}' \in E,$ then 
$\mathbf{e} \sim \mathbf{e}'$, 
which means that $\sim$ contains the Rees congruence on
$\mathbb{N}^n$ modulo $E$. 

Conversely, let $E$ be a monoid ideal of $\mathbb{N}^n$ such that  
$\sim$ contains the Rees congruence on $\mathbb{N}^n$ modulo $E$. We
claim that the class $[\mathbf{e}]$ (for any $\mathbf{e} \in E$) is a
nil in $S = \mathbb{N}^n/\!\sim$. To see this, 
let $\mathbf{e} \in E$, $\mathbf{u}\in \mathbb{N}^n$. Then
$\pi(\mathbf{e}) + \pi(\mathbf{u}) = \pi(\mathbf{e} + \mathbf{u})$.
Since $E$ is a monoid ideal, $\mathbf{e} + \mathbf{u} \in
E$. This implies that $\mathbf{e} \sim \mathbf{e} + \mathbf{u}$ (or equivalently,
$\pi(\mathbf{e}) = \pi(\mathbf{e} + \mathbf{u})$) because $\sim$
contains the Rees congruence modulo $E$. To complete the
proof of our claim, we need to show that $[\mathbf{e}]$ ($\mathbf{e}
\in E$) is not the zero class. This follows from the fact that $E \neq \mathbb{N}^n$.
\end{proof}

Our next goal is to prove Theorem~\ref{KMThm9.12}, which is a more
precise version of Proposition~\ref{Prop 9.5KM}. With that result in
hand, we will be able to introduce the binomial ideal associated to a
congruence in Definition~\ref{def:binomialIdealAssocCong}.

\begin{definition}
\label{def:augmentationIdeal}
An \textbf{augmentation ideal} for a given
binomial ideal $I \subset \Bbbk[\mathbf{X}]$ is a maximal ideal of
the form 
\[
I_{\mathrm{aug}} := \langle X_i - \lambda_i\ \mid  \lambda_i
\in \Bbbk^*,\ i = 1, \ldots, n \rangle
\] 
such that $I \cap I_{\mathrm{aug}}$ is a binomial ideal. 
\end{definition}

We point out that, given a binomial ideal $I$, an augmentation ideal
for $I$ may or may not exist (see~\cite[Example~9.13]{KM} for a
binomial ideal without an augmentation ideal). The following result
is the $\mathbb{N}^n$-version of~\cite[Theorem 9.12]{KM}.

\begin{theorem}
\label{KMThm9.12}
If $I_\ell \supset \ldots \supset I_0$ is a chain of distinct binomial ideals of
$\Bbbk[\mathbf{X}]$ inducing the same congruence on $\mathbb{N}^n$,
then $\ell \leq 1$. Moreover, if $\ell = 1$ then $I_0$ is pure and $I_1$ is not: $I_0 = I_1 \cap I_{\mathrm{aug}}$ for an
augmentation ideal for $I_1$. 
\end{theorem}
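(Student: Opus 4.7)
The plan is to bootstrap off Proposition~\ref{Prop 9.5KM}, which already identifies the monomial enlargement $I_1 = I_0 + M$, where $M = \langle \mathbf{X}^{\mathbf{e}} \mid [\mathbf{e}] = \infty \rangle$, whenever two distinct binomial ideals $I_0 \subsetneq I_1$ induce the same congruence on $\mathbb{N}^n$. The first assertion then follows immediately by applying this twice: if $I_2 \supsetneq I_1 \supsetneq I_0$ all share a congruence, then both $I_1 = I_0 + M$ and $I_2 = I_0 + M$, forcing $I_1 = I_2$, a contradiction. So $\ell \leq 1$.

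Assuming $\ell = 1$, the non-purity of $I_1$ is free: $I_1 = I_0 + M \supsetneq I_0$ forces at least one monomial generator of $M$ to lie in $I_1$. To see that $I_0$ must be pure, I would argue by contradiction: if $\mathbf{X}^{\mathbf{u}} \in I_0$, then, as observed just before Proposition~\ref{Prop 9.5KM}, $[\mathbf{u}] = \infty$; for any $\mathbf{e}$ with $[\mathbf{e}] = \infty$ the binomial $\mathbf{X}^{\mathbf{u}} - \lambda \mathbf{X}^{\mathbf{e}} \in I_0$ (for some $\lambda \in \Bbbk^*$) combined with $\mathbf{X}^{\mathbf{u}} \in I_0$ gives $\mathbf{X}^{\mathbf{e}} \in I_0$, whence $M \subseteq I_0$ and $I_1 = I_0$, contradicting $\ell = 1$.

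It remains to exhibit an augmentation ideal $I_{\mathrm{aug}}$ for $I_1$ with $I_1 \cap I_{\mathrm{aug}} = I_0$. Since $I_0$ is pure, no power of $X_1 \cdots X_n$ belongs to $I_0$, so as $\Bbbk$ is algebraically closed the Nullstellensatz supplies a point $(\lambda_1,\dots,\lambda_n) \in V(I_0) \cap (\Bbbk^*)^n$. I would then set $I_{\mathrm{aug}} := \langle X_i - \lambda_i \mid i = 1,\dots,n \rangle$. The containment $I_0 \subseteq I_1 \cap I_{\mathrm{aug}}$ is clear from $I_0 \subseteq I_1$ and the vanishing of $I_0$ at $(\lambda_1,\dots,\lambda_n)$. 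For the reverse inclusion, given $f \in I_1 \cap I_{\mathrm{aug}}$, I would decompose $f = g + h$ with $g \in I_0$ and $h \in M$, and aim to show $h \in I_0$.

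The hard part is exactly this last step, and the key tool will be the fine grading on $R_0 := \Bbbk[\mathbf{X}]/I_0$ provided by Theorem~\ref{Prop1.11ES} and Remark~\ref{remark:insteadOfLema1.1}, with grading monoid $\mathbb{N}^n/\!\sim_{I_0}$. The image $\bar M$ of $M$ in $R_0$ lies in the $\infty$-graded piece since $[\mathbf{u}+\mathbf{e}] = \infty$ whenever $[\mathbf{e}] = \infty$, and fine grading makes this piece one-dimensional, spanned by $\overline{\mathbf{X}^{\mathbf{e}_0}}$ for any fixed $\mathbf{e}_0$ with $[\mathbf{e}_0] = \infty$ (nonzero by purity of $I_0$). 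Thus $\bar h = c\,\overline{\mathbf{X}^{\mathbf{e}_0}}$ for some $c \in \Bbbk$; evaluating the relation $\bar h \in I_{\mathrm{aug}} R_0$ at the point $(\lambda_1,\dots,\lambda_n) \in V(I_0)$ yields $c \lambda^{\mathbf{e}_0} = 0$, and since $\lambda^{\mathbf{e}_0} \neq 0$ we conclude $c = 0$, i.e., $h \in I_0$. Therefore $I_1 \cap I_{\mathrm{aug}} = I_0$, which is binomial, confirming that $I_{\mathrm{aug}}$ is indeed an augmentation ideal for $I_1$. The crux is that purity of $I_0$ simultaneously supplies a torus point of $V(I_0)$ \emph{and} keeps $\overline{\mathbf{X}^{\mathbf{e}_0}} \neq 0$ in $R_0$, allowing the single nonvanishing of $\lambda^{\mathbf{e}_0}$ to annihilate the entire one-dimensional space $\bar M$.
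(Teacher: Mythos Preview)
Your proof is correct and follows a genuinely different route from the paper's. Both arguments reduce to Proposition~\ref{Prop 9.5KM} for the bound $\ell\le 1$ and for the purity claims, but they diverge when it comes to producing $I_{\mathrm{aug}}$ and verifying $I_1\cap I_{\mathrm{aug}}=I_0$. The paper constructs the scalars $\lambda_i$ explicitly from the nil-absorption relations $X_i\mathbf{X}^{\mathbf{e}}-\lambda_i\mathbf{X}^{\mathbf{e}}\in I_0$, identifies $I_{\mathrm{aug}}$ with the colon ideal $(I_0:\mathbf{X}^{\mathbf{e}})$, then shows that $I_1\cap I_{\mathrm{aug}}$ is binomial via an auxiliary-variable elimination argument (as in \cite[Corollary~1.5]{ES96}) and applies Proposition~\ref{Prop 9.5KM} once more to force equality with $I_0$. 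You instead obtain the $\lambda_i$ non-constructively from the Nullstellensatz (purity of $I_0$ guarantees a torus point in $V(I_0)$), and prove $I_1\cap I_{\mathrm{aug}}=I_0$ directly by exploiting that the fine $(\mathbb{N}^n/\!\sim)$-grading on $\Bbbk[\mathbf{X}]/I_0$ collapses the image of $M$ to a single line, which one evaluation at the torus point kills. Your route is slicker and avoids the elimination machinery entirely; the paper's is more explicit and yields the extra information $I_{\mathrm{aug}}=(I_0:\mathbf{X}^{\mathbf{e}})$ without appealing to the Nullstellensatz. The two choices of $I_{\mathrm{aug}}$ in fact agree: evaluating the relation $X_i\mathbf{X}^{\mathbf{e}}-\lambda_i\mathbf{X}^{\mathbf{e}}\in I_0$ at any torus point of $V(I_0)$ shows that such a point is unique and equals the paper's $(\lambda_1,\dots,\lambda_n)$.
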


\begin{proof}
By Proposition~\ref{Prop 9.5KM}, all we need to show is that if
$\ell=1$, then $I_0 = I_1 \cap I_{\mathrm{aug}}$, where
$I_{\mathrm{aug}}$ is an augmentation ideal for $I_1$. Denote by
$\sim$ the congruence induced by $I_0$ (and $I_1$). 

Assume $\ell =1$, so that $I_0$ does not have monomials, and $I_1$
does. In particular, we may select a monomial
$\mathbf{X}^{\mathbf{e}} \in I_1$, and its equivalence class $[\mathbf e]$
with respect to $\sim$ is a nil element, that
we denote $\infty$. For each $1\leq i \leq n$, consider the monomial
$X_i = \mathbf{X}^{\mathbf{e}_i}$. Since $[\mathbf{e}_i] + \infty =
\infty$, there exists $\lambda_i \in \Bbbk^*$ such that 
$X_i \mathbf{X}^{\mathbf{e}} - \lambda_i \mathbf{X}^{\mathbf{e}} \in
I_0$ (because $I_0$ and $I_1$ induce the same congruence). 
We now define $I_{\mathrm{aug}} = \langle X_i-\lambda_i \mid i=1,\dots,n \rangle$,
and claim that $I_1 \cap I_{\mathrm{aug}} = I_0$, which in particular
shows that $I_{\mathrm{aug}}$ is an augmentation ideal for $I_1$.

By construction, $(I_0:\mathbf{X}^{\mathbf{e}}) \supseteq
I_{\mathrm{aug}}$. Note that $(I_0:\mathbf{X}^{\mathbf{e}}) \neq
\langle 1 \rangle$, as $I_0$ contains no monomials. Thus,
since $I_{\mathrm{aug}}$ is maximal, $(I_0:\mathbf{X}^{\mathbf{e}}) =
I_{\mathrm{aug}}$, and we conclude that $I_{\mathrm{aug}}$ contains
$I_0$. This, and $I_1 \supset I_0$, imply that $I_1 \cap
I_{\mathrm{aug}} \supseteq I_0$. 
Moreover, $I_{\mathrm{aug}} \not \supseteq I_1$ because $I_1$
has monomials, while $I_{\mathrm{aug}}$ does not. Consequently 
$I_1 \supsetneq I_1 \cap I_{\mathrm{aug}} \supseteq I_0$. Now the equality $I_1
\cap I_{\mathrm{aug}} = I_0$ will follow from Proposition~\ref{Prop
  9.5KM} if we show that $I_1 \cap I_{\mathrm{aug}}$ is binomial
(since the fact that $I_1$ and $I_0$ induce the same congruence $\sim$
implies that the congruence induced by $I_1 \cap I_{\mathrm{aug}}$ is
also $\sim$). To see that $I_1 \cap I_{\mathrm{aug}}$ is binomial, we
use the argument from~\cite[Corollary~1.5]{ES96}. Introduce an
auxiliary variable $t$, and consider the binomial ideal $J = I_0 +
tI_{\mathrm{aug}} +(1-t) \langle \mathbf{X}^{\mathbf{u}} \mid [\mathbf{u}]= 
[\mathbf{e}] \rangle \subset \Bbbk[\mathbf{X},t]$. Since, by Proposition~\ref{Prop 9.5KM}, 
$I_1= I_0 + \langle \mathbf{X}^{\mathbf{u}} \mid [\mathbf{u}]=\infty =
[\mathbf{e}] \rangle$, we have that $J \cap
\Bbbk[\mathbf{X}] = I_1 \cap I_{\mathrm{aug}}$. Now apply Corollary~\ref{coro:elimIsBinomial}.

%
%
\end{proof}

\begin{example} 
If $I_1 = \langle X - Y, Y^2 \rangle \subset
  \Bbbk[X,Y]$, then $I_0 = I_1 \cap \langle X-1,Y-1 \rangle = \langle
  X-Y,Y^3-Y^2 \rangle$. This can be verified as follows. 
\begin{verbatim}
      R = QQ[X,Y];
      I1 = ideal(X-Y, Y^2);
      Iaug = ideal(X-1,Y-1);
      I0 = intersect(I1,Iaug);
      mingens I0;
\end{verbatim}
Note that these ideals already appeared in Examples~\ref{ejem10b} (i)
and \ref{ejem10} \ref{item{iv}}.
\end{example}

\begin{remark}
\label{rmk:differentIdealSameCongruence}
The previous results highlight one way in which two different binomial
ideals in $\Bbbk[\mathbf{X}]$ induce the same congruence on
$\mathbb{N}^n$, namely if one contains the other, the congruence has a
nil element, and the larger ideal contains monomials corresponding to
the nil class, while the smaller ideal has no monomials.

There is another way to produce binomial ideals inducing the same
congruence. Let $I$ be a binomial ideal in 
$\Bbbk[\mathbf{X}]$, and let $\mu_1,\dots,\mu_n \in \Bbbk^*$. Consider
the ring isomorphism $\Bbbk[\mathbf{X}] \to \Bbbk[\mathbf{X}]$ given
by $X_i \mapsto \mu_i X_i$ for $i=1,\dots,n$. (This kind of
isomorphism is known as \textbf{rescaling the variables}.)  Then the
image of $I$ is a binomial ideal, which induces the same congruence as
$I$. Indeed, the effect on $I$ of rescaling the variables is to change
the coefficients of the binomials in $I$ by a nonzero multiple, which
does not alter the exponents of those monomials. 

In Theorem~\ref{KMThm9.12}, the ideal $I_0$ can be made
unital by rescaling the variables, by using that $\Bbbk$ is algebraically closed if necessary. The ideal obtained this way
equals the ideal introduced in~\eqref{ecu2}.
\end{remark}

We are now ready to introduce the binomial ideal associated to a
congruence in $\mathbb{N}^n$.

\begin{definition}
\label{def:binomialIdealAssocCong}
Given a congruence $\sim$ on $\mathbb{N}^n$, denote by $I_\sim$ the
unital binomial ideal of $\Bbbk[\mathbf X]$ which is maximal among all proper binomial ideals inducing $\sim$. We say that $I_\sim$ is the \textbf{binomial ideal associated}
to $\sim$.
\end{definition}

To close this section, we introduce one final notion. 

\begin{definition}
\label{def:intersectionOfCongruences}
Let $\sim_1$ and $\sim_2$ be congruences on $\mathbb{N}^n$. The
\textbf{intersection}  $\sim$ of $\sim_1$ and $\sim_2$, denoted $\sim=\sim_1 \cap \sim_2$,
is the congruence on $\mathbb{N}^n$ defined by 
$\mathbf{u} \sim \mathbf{v}$ if and only if $\mathbf{u}
\sim_1 \mathbf{v}$ and $\mathbf{u} \sim_2 \mathbf{v}$.
\end{definition}

From the point of view of equivalence relations, the equivalence
classes of $\sim_1 \cap \sim_2$ form a partition of $\mathbb{N}^n$
which is the common refinement of the partitions induced by $\sim_1$
and $\sim_2$. The following result motivates the use of the
intersection notation and terminology: the intersection of congruences corresponds to the ideal generated by the binomials in the intersection of their associated binomial ideals.

\begin{proposition}\label{Prop:refinement-intersection}
Let $\sim, \sim_1$ and $\sim_2$ be congruences on $\mathbb{N}^n$
whose associated ideals 
in $\Bbbk[\mathbf{X}]$ \text{\rm({Definition~\ref{def:binomialIdealAssocCong}})}
are $I_\sim, I_{\sim_1}$ and $I_{\sim_2},$ respectively. Then $\sim =
\sim_1 \cap \sim_2$ if and only if $I_\sim \subseteq I_{\sim_1} \cap I_{\sim_2}$, and the equality holds if and only if $I_{\sim_1} \cap I_{\sim_2}$ is a binomial ideal.
\end{proposition}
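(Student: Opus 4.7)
The plan is to handle the statement in two parts. The first, relating $\sim = \sim_1 \cap \sim_2$ to the inclusion $I_\sim \subseteq I_{\sim_1} \cap I_{\sim_2}$, rests on inspecting the generators of $I_\sim$. The second, characterizing equality of ideals by binomiality, exploits the maximality in Definition~\ref{def:binomialIdealAssocCong}.

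For the implication $\sim = \sim_1 \cap \sim_2 \Rightarrow I_\sim \subseteq I_{\sim_1} \cap I_{\sim_2}$, I would use $\sim \subseteq \sim_i$ and check $I_\sim \subseteq I_{\sim_i}$ on generators. The ideal $I_\sim$ is generated, as a unital binomial ideal, by binomials $\mathbf{X}^{\mathbf{u}} - \mathbf{X}^{\mathbf{v}}$ with $\mathbf{u} \sim \mathbf{v}$, together with monomials $\mathbf{X}^{\mathbf{e}}$ arising when $\mathbb{N}^n/\!\sim$ has a nil element $\infty$ and $[\mathbf{e}] = \infty$. For binomial generators, $\mathbf{u} \sim_i \mathbf{v}$ places them in $I_{\sim_i}$. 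For a monomial generator, the nil property $\mathbf{e} + \mathbf{u} \sim \mathbf{e}$ for all $\mathbf{u}$ combined with $\sim \subseteq \sim_i$ yields $\mathbf{e} + \mathbf{u} \sim_i \mathbf{e}$, so $[\mathbf{e}]_{\sim_i}$ is nil in $\mathbb{N}^n/\!\sim_i$ and therefore $\mathbf{X}^{\mathbf{e}} \in I_{\sim_i}$.

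The converse implication, $I_\sim \subseteq I_{\sim_1} \cap I_{\sim_2} \Rightarrow \sim = \sim_1 \cap \sim_2$, begins with $I_\sim \subseteq I_{\sim_i}$, which via~\eqref{simI} gives $\sim \subseteq \sim_i$ for each $i$ and hence $\sim \subseteq \sim_1 \cap \sim_2$. For the opposite containment, I would consider the ideal $J$ generated by the binomials lying in $I_{\sim_1} \cap I_{\sim_2}$: since each $I_{\sim_i}$ is unital, the binomial generators of $J$ are of the unital form $\mathbf{X}^{\mathbf{u}} - \mathbf{X}^{\mathbf{v}}$ or $\mathbf{X}^{\mathbf{u}}$, so $J$ is itself a unital binomial ideal and induces exactly $\sim_1 \cap \sim_2$. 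The maximality in Definition~\ref{def:binomialIdealAssocCong} then forces the comparison between $I_\sim$ and $J$ to translate into $\sim_1 \cap \sim_2 \subseteq \sim$.

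For the equality characterization, one direction is immediate, as $I_\sim$ is binomial by construction. Conversely, if $I_{\sim_1} \cap I_{\sim_2}$ is binomial, the unital property transfers from each $I_{\sim_i}$ to the intersection (any binomial $\mathbf{X}^{\mathbf{u}} - \lambda \mathbf{X}^{\mathbf{v}}$ with $\lambda \in \Bbbk^*$ that lies in both unital ideals $I_{\sim_i}$ must have $\lambda = 1$), so $I_{\sim_1} \cap I_{\sim_2}$ is itself a unital binomial ideal inducing $\sim_1 \cap \sim_2 = \sim$; maximality in Definition~\ref{def:binomialIdealAssocCong} then forces $I_{\sim_1} \cap I_{\sim_2} \subseteq I_\sim$, which combined with the forward inclusion produces equality. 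The chief obstacle is the careful handling of nil classes (notably in the forward direction) and the inheritance of the unital property under intersection, both requiring a close analysis of the precise form of binomials inside unital binomial ideals.
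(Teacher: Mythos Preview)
Your forward direction of the first part (including the careful handling of monomial generators coming from nil classes) and your treatment of the equality characterization are correct. For the latter, your appeal to the maximality in Definition~\ref{def:binomialIdealAssocCong} is essentially the paper's argument: the paper cites Theorem~\ref{KMThm9.12} directly, and that theorem is precisely what underpins the uniqueness/maximality in Definition~\ref{def:binomialIdealAssocCong}.

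The gap is in your converse for the first part. You assert that maximality of $I_\sim$ ``forces the comparison between $I_\sim$ and $J$ to translate into $\sim_1 \cap \sim_2 \subseteq \sim$'', but the maximality in Definition~\ref{def:binomialIdealAssocCong} is only among binomial ideals inducing $\sim$; it gives no control over $J$, which induces $\sim_1 \cap \sim_2$, a congruence not yet known to coincide with $\sim$. Indeed, the converse read literally is false: if $\sim$ is the identity congruence on $\mathbb{N}^n$ then $I_\sim = \langle 0 \rangle \subseteq I_{\sim_1} \cap I_{\sim_2}$ for any $\sim_1, \sim_2$, while $\sim \neq \sim_1 \cap \sim_2$ in general.

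The paper's proof does not attempt this converse. Its first sentence rewrites $\sim = \sim_1 \cap \sim_2$ as the (honestly biconditional) statement that the pure differences $\mathbf{X}^{\mathbf{u}} - \mathbf{X}^{\mathbf{v}}$ lying in $I_\sim$ are exactly those lying in $I_{\sim_1} \cap I_{\sim_2}$; the inclusion $I_\sim \subseteq I_{\sim_1} \cap I_{\sim_2}$ and the discussion of when equality holds are then consequences drawn \emph{under the hypothesis} $\sim = \sim_1 \cap \sim_2$, not a characterization of it. Read that way, your proof of that direction already suffices.
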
 

\begin{proof}
The statement $\mathbf{u} \sim \mathbf{v}$ if and only if $\mathbf{u}
\sim_1 \mathbf{v}$ and $\mathbf{u} \sim_2 \mathbf{v}$ is exactly the
same as $\mathbf{X}^\mathbf{u} - \mathbf{X}^\mathbf{v} \in I_\sim$ if and only if $\mathbf{X}^\mathbf{u} - \mathbf{X}^\mathbf{v} \in I_{\sim_1}$ and $\mathbf{X}^\mathbf{u} - \mathbf{X}^\mathbf{v} \in I_{\sim_2}$. The direct implication of the last statement follows from Theorem \ref{KMThm9.12} and its converse is trivially true because $I_\sim$ is a binomial ideal.
\end{proof}

The following example illustrates the last statement above.

\begin{example}
Let $\sim_1$ and $\sim_2$ be the congruences on $\mathbb{N}^2$ such that $\mathbf{u} \sim_1 \mathbf{v}$ if $\mathbf{u}-\mathbf{v} \in \mathbb{Z}(2,-2)$ and $\mathbf{u} \sim_2 \mathbf{v}$ if $\mathbf{u}-\mathbf{v} \in \mathbb{Z}(3,-3)$, respectively. The binomial ideals of $\mathbb{Q}[X,Y]$ associated to $\sim_1$ and $\sim_2$ are $I_{\sim_1} = \langle X^2 - Y^2 \rangle$ and $I_{\sim_2} = \langle X^3 - Y^3 \rangle$, respectively. Clearly, the binomial ideal associated to $\sim = \sim_1 \cap \sim_2$ is $I_\sim = \langle X^6 - Y^6 \rangle$. Whereas, $I_{\sim_1} \cap I_{\sim_2} = \langle X^4+X^3Y-XY^3-Y^4 \rangle$:
\begin{verbatim}
      R = QQ[X,Y];
      I1 = ideal(X^2-Y^2);
      I2 = ideal(X^3-Y^3);
      intersect(I1,I2);
\end{verbatim}
\end{example}

\section{Toric, lattice and mesoprime ideals}

This section is devoted to the (finitely generated abelian) monoids contained in a group. 

Let $(G,+)$ be a finitely generated abelian group and let $\mathcal{A} = \{\mathbf{a}_1, \ldots, \mathbf{a}_n\}$ be a given subset  of $G$, we
consider the subsemigroup $S$ of $G$ generated by $\mathcal{A},$ that
is to say, 
\[
S = \mathbb{N} \mathbf{a}_1 + \ldots + \mathbb{N}
\mathbf{a}_n.
\]
Since $0 \in \mathbb{N}$, the semigroup $S$ is
actually a monoid.  We may define a surjective monoid map as follows
\begin{equation}
\label{eqn:degA}
\deg_\mathcal{A} : \mathbb{N}^n \longrightarrow S; \quad
\mathbf{u} =(u_1, \ldots, u_n) \longmapsto \deg_\mathcal{A}(\mathbf{u}) =
\sum_{i=1}^n u_i \mathbf{a}_i.
\end{equation}
In the literature, this map is called the factorization
map of $S$ and accordingly, the fiber
$\deg^{-1}_\mathcal{A}(\mathbf{a})$ is called the set of
factorizations of $\mathbf{a} \in S$. 


Clearly $\deg_{\mathcal{A}}(-)$ determines a congruence on
$\mathbb{N}^n$; in fact, it is the congruence on
$\mathbb{N}^n$ whose presentation map is precisely $\deg_{\mathcal{A}}(-)$ (cf. \eqref{ecu1}). Therefore, if $\widehat{\deg_\mathcal{A}}$ is the map defined in~\eqref{ecu2a}, namely, 
\[\widehat{\deg_\mathcal{A}} : \Bbbk[\mathbb{N}^n] = \Bbbk[\mathbf{X}]  \rightarrow  \Bbbk[S] \ ; \quad
\mathbf{X}^\mathbf{u}  \mapsto  \chi^{\deg_{\mathcal{A}}(\mathbf{u})},  
\] by Theorem~\ref{Th 1.1}, we have that $I_{\mathcal{A}}=\ker(\widehat{\deg_\mathcal{A}})$ is 
spanned as a $\Bbbk-$vector space by
the set of binomials 
\begin{equation}
\label{eqn:binomials}
\{ \mathbf{X}^\mathbf{u} - \mathbf{X}^\mathbf{v} \mid \mathbf{u},
\mathbf{v} \in \mathbb{N}^n\ \text{with}\ \deg_\mathcal{A}(\mathbf{u})
= \deg_\mathcal{A}(\mathbf{v}) \}.
\end{equation}

Observe that $\Bbbk[\mathbf{X}]$ is $S-$graded via $\deg(X_i) =
\mathbf{a}_i,\ i = 1, \ldots, n$. This grading is known as the
\textbf{$\mathcal{A}-$grading} on $\Bbbk[\mathbf{X}]$. 
The semigroup algebra $\Bbbk[S] = \oplus_{\mathbf a \in S} {\rm Span}_\Bbbk \{ \chi^\mathbf{a}\}$ also has a natural $S-$grading. Under these gradings, the map of semigroup algebras $\widehat{\deg_\mathcal{A}}$ is a graded map.
Hence, the ideal $I_\mathcal{A} = \ker(\widehat{\deg_\mathcal{A}})$ is $S-$homogeneous. 

\begin{proposition}\label{Prop rc_ecu1}
Use the notation introduced above, and 
assume that $\mathbf{a}_1,\dots,\mathbf{a}_n$ are nonzero.
The following are equivalent:
\begin{enumerate}
\item \label{item-a}
The fibers of map $\deg_{\mathcal{A}}(-)$ are finite.
\item \label{item-b}
$\deg^{-1}_{\mathcal{A}}(\mathbf{0})= \{ (0, \ldots, 0) \}$.
\item \label{item-c}
$S \cap (-S) = \{0\}$, that is to say,  $\mathbf{a} \in S$ and
  $-\mathbf{a} \in S \Rightarrow \mathbf{a} = \mathbf{0}$. 
\item \label{item-d}
The relation $\mathbf{a}' \preceq \mathbf{a} \Longleftrightarrow
  \mathbf{a}' - \mathbf{a} \in S$ is a partial order on $S$. 
\end{enumerate}
\end{proposition}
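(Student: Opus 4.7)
The plan is to establish the cycle \textit{(a)}$\Rightarrow$\textit{(b)}$\Rightarrow$\textit{(c)}$\Rightarrow$\textit{(d)}$\Rightarrow$\textit{(a)}. The first three implications are routine formal manipulations; the fourth is the substantive step and is where Dickson's lemma enters.

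For \textit{(a)}$\Rightarrow$\textit{(b)}: the fiber $\deg_{\mathcal{A}}^{-1}(\mathbf{0})$ is a submonoid of $\mathbb{N}^n$, hence closed under the map $\mathbf{u}\mapsto k\mathbf{u}$; if it contained any $\mathbf{u}\neq\mathbf{0}$, the iterates $\{k\mathbf{u}\mid k\in\mathbb{N}\}$ would form an infinite subset, contradicting \textit{(a)}. For \textit{(b)}$\Rightarrow$\textit{(c)}: given $\mathbf{a}\in S\cap(-S)$, pick $\mathbf{u},\mathbf{v}\in\mathbb{N}^n$ with $\deg_{\mathcal{A}}(\mathbf{u})=\mathbf{a}$ and $\deg_{\mathcal{A}}(\mathbf{v})=-\mathbf{a}$; then $\mathbf{u}+\mathbf{v}\in\deg_{\mathcal{A}}^{-1}(\mathbf{0})=\{\mathbf{0}\}$, forcing $\mathbf{a}=\mathbf{0}$. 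For \textit{(c)}$\Rightarrow$\textit{(d)}: reflexivity and transitivity of $\preceq$ are immediate from $\mathbf{0}\in S$ and closure of $S$ under addition, while antisymmetry reduces precisely to the condition $S\cap(-S)=\{\mathbf{0}\}$.

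The main obstacle is \textit{(d)}$\Rightarrow$\textit{(a)}. First, applying antisymmetry of $\preceq$ at $\mathbf{0}$ and any $\mathbf{a}\in S\cap(-S)$ recovers \textit{(c)}, so we may assume $S\cap(-S)=\{\mathbf{0}\}$. Now argue by contradiction: suppose $\deg_{\mathcal{A}}^{-1}(\mathbf{a})$ is infinite for some $\mathbf{a}\in S$. By Dickson's lemma (the componentwise order on $\mathbb{N}^n$ is a well-quasi-order, so any infinite subset contains two distinct comparable elements), this fiber contains elements $\mathbf{u}\lneq\mathbf{v}$, and then $\mathbf{w}:=\mathbf{v}-\mathbf{u}\in\mathbb{N}^n\setminus\{\mathbf{0}\}$ satisfies $\deg_{\mathcal{A}}(\mathbf{w})=\mathbf{0}$. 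Picking an index $i$ with $w_i>0$, the identity $w_i\mathbf{a}_i=-\sum_{j\neq i}w_j\mathbf{a}_j$ exhibits $w_i\mathbf{a}_i$ as an element of both $S$ and $-S$, hence $w_i\mathbf{a}_i=\mathbf{0}$. If $w_i=1$ this gives $\mathbf{a}_i=\mathbf{0}$ directly; if $w_i\geq 2$, then $-\mathbf{a}_i=(w_i-1)\mathbf{a}_i\in S$, so $\mathbf{a}_i\in S\cap(-S)=\{\mathbf{0}\}$. Either way, we contradict the hypothesis $\mathbf{a}_i\neq\mathbf{0}$, so all fibers of $\deg_{\mathcal{A}}$ must be finite.
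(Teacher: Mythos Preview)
Your proof is correct and uses the same key ingredients as the paper's: Dickson's lemma to pass between finiteness of fibers and triviality of the zero fiber, and the observation that a nonzero element of $\deg_{\mathcal{A}}^{-1}(\mathbf{0})$ together with $S\cap(-S)=\{\mathbf{0}\}$ forces some $\mathbf{a}_i=\mathbf{0}$. The only difference is organizational---the paper proves the pairwise equivalences \textit{(a)}$\Leftrightarrow$\textit{(b)}, \textit{(b)}$\Leftrightarrow$\textit{(c)}, \textit{(c)}$\Leftrightarrow$\textit{(d)} separately rather than a single cycle---but the substance is the same.
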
 

\begin{proof}
Before we proceed with the proof, we note that if one of the
$\mathbf{a}_i$ is zero, then this result is false. For example, let
$G=\mathbb{Z}$, $\mathcal{A}=\{ \mathbf{a}_1=0, \mathbf{a}_2=1\}$. Then
$S=\mathbb{N}$, for which~\ref{item-c} and~\ref{item-d} hold, but
$\deg_{\mathcal{A}}(-)$ does not satisfy either~\ref{item-a} or~\ref{item-b}.

\ref{item-a}$\Rightarrow$\ref{item-b} If $\mathbf{u} \in
\deg^{-1}_{\mathcal{A}}(\mathbf{0})$,
then for every $\ell \in \mathbb{N}$, $\ell \mathbf{u} \in
\deg^{-1}_{\mathcal{A}}(\mathbf{0})$. If $\mathbf{u}\neq (0,\dots,0)$,
then $\deg^{-1}_{\mathcal{A}}(\mathbf{0})$ is infinite.

\ref{item-a}$\Leftarrow$\ref{item-b}
Dickson's Lemma states that any nonempty subset of $\mathbb{N}^n$ has
finitely many minimal elements with respect to the partial order given
by coordinatewise $\leq$. Suppose that
$\deg_{\mathcal{A}}^{-1}(\mathbf{a})$ is infinite. Then by Dickson's
Lemma there exists $\mathbf{u} \in \deg_{\mathcal{A}}^{-1}(\mathbf{a})$
which is not minimal, and therefore there is also $\mathbf{v} \in
\deg_{\mathcal{A}}^{-1}(\mathbf{a})$ such that
$\mathbf{v} \leq \mathbf{u}$ coordinatewise. We conclude that
$\mathbf{u}-\mathbf{v} \in \mathbb{N}^n$ is a nonzero element of
$\deg_{\mathcal{A}}^{-1}(\mathbf{0})$. 

\ref{item-b}$\Rightarrow$\ref{item-c} Let $\mathbf{u}, \mathbf{v} \in
\mathbb{N}^n$ be such that $\deg_{\mathcal{A}}(\mathbf{u})= \mathbf{a}$
and $\deg_{\mathcal{A}}(\mathbf{v})= -\mathbf{a}$. Then 
$\deg_{\mathcal{A}}(\mathbf{u}+\mathbf{v}) = \mathbf{0}$, so that
$\mathbf{u}+\mathbf{v}=(0,\dots,0)$, and therefore
$\mathbf{u}=\mathbf{v}=(0,\dots,0)$, which implies that
$\mathbf{a}=\mathbf{0}$. 

\ref{item-b}$\Leftarrow$\ref{item-c}
Let $\mathbf{u} \in \deg^{-1}_{\mathcal{A}}(\mathbf{0})$. If
$\mathbf{u}_1,\mathbf{u}_2 \in \mathbb{N}^n$ are such that
$\mathbf{u}=\mathbf{u}_1+\mathbf{u}_2$, then by~\ref{item-c}
$\deg_{\mathcal{A}}(\mathbf{u}_1)=\deg_{\mathcal{A}}(\mathbf{u}_2) =
\mathbf{0}$. Repeatedly applying this argument, we conclude that
if $\mathbf{u}\neq 0$, so in particular it has a nonzero coordinate,
then there exists $1\leq i \leq n$ such that
$\mathbf{a}_i=\deg_{\mathcal{A}}(\mathbf{e}_i)=\mathbf{0}$, a contradiction.

\ref{item-c}$\Leftrightarrow$\ref{item-d}
The relation $\preceq$ is always reflexive and transitive.
The fact that $\preceq$ is antisymmetric is equivalent to~\ref{item-c}. 
\end{proof}


\begin{remark}\label{remark_pos}
If the conditions of Proposition~\ref{Prop rc_ecu1} hold, the monoid
$S$ generated by $\mathcal{A}$ is said to be \textbf{positive}. When
$S$ is positive, $\mathfrak{m} =
\langle X_1, \ldots, X_n \rangle$ is the only 
$S-$homogeneous maximal ideal in $\Bbbk[\mathbf{X}]$. Recall that 
a graded ideal $\mathfrak{m}$ in a graded ring $R$ is a \textbf{graded
  maximal ideal} or \textbf{$^*$maximal ideal} if the only graded
ideal properly containing $\mathfrak{m}$ is $R$ itself.
Graded rings with a unique graded maximal ideal are known as \textbf{graded
local rings} or \textbf{$^*$local rings}. Many results valid for local rings are
also valid for graded local rings, starting with Nakayama's Lemma. In particular, the
\emph{minimal free resolution} of any finitely generated
$\mathcal{A}-$graded $\Bbbk[\mathbf{X}]-$module is well-defined (see
\cite[Section 1.5]{BH} and \cite{Collectanea}). 
\end{remark}

All the monoids in this section are contained in a group. The next result characterize the condition for a monoid to be contained in a group. To state it we need to introduce the following concepts.

\begin{definition}
Let $\sim$ be a congruence on $\mathbb{N}^n$. We will say that $\mathbf{a} \in \mathbb{N}^n/\!\sim$ is \textbf{cancellable} if $\mathbf{b} + \mathbf{a} = \mathbf{c} + \mathbf{a} \Rightarrow \mathbf{b} = \mathbf{c}$, for all $\mathbf{b}, \mathbf{c} \in \mathbb{N}^n/\!\sim$. A monoid is said to be \textbf{cancellative} if all its elements are cancellable. A congruence $\sim$ on $\mathbb{N}^n$ is cancellative if the monoid $\mathbb{N}^n/\!\sim$ is cancellative.
\end{definition}

In the part \ref{item{iv}} of Example \ref{ejem10}, an example of non-cancellative monoid is exhibited.

\begin{proposition}\label{prop:cancellative}
A (finitely generated commutative) monoid is contained in a group if and only if it is cancellative.
\end{proposition}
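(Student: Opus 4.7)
The forward direction is essentially immediate: if $S$ embeds as a submonoid of a group $(G,+)$, then cancellation holds in $G$ (add the inverse of the common summand to both sides), and cancellation in $G$ restricts to cancellation in $S$. So my first paragraph would just note this.

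For the converse, the plan is to build the Grothendieck group $G(S)$ of $S$ (the analogue of passing from $\mathbb{N}$ to $\mathbb{Z}$). On the set $S\times S$, define the relation
\[
(\mathbf{a},\mathbf{b})\sim(\mathbf{c},\mathbf{d}) \iff \mathbf{a}+\mathbf{d}=\mathbf{b}+\mathbf{c}.
\]
I would verify that $\sim$ is an equivalence relation: reflexivity and symmetry are formal, and this is exactly where cancellativity is used for transitivity. If $(\mathbf{a},\mathbf{b})\sim(\mathbf{c},\mathbf{d})$ and $(\mathbf{c},\mathbf{d})\sim(\mathbf{e},\mathbf{f})$, then adding the defining equalities gives $\mathbf{a}+\mathbf{f}+(\mathbf{c}+\mathbf{d})=\mathbf{b}+\mathbf{e}+(\mathbf{c}+\mathbf{d})$, and cancelling $\mathbf{c}+\mathbf{d}$ yields $(\mathbf{a},\mathbf{b})\sim(\mathbf{e},\mathbf{f})$. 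Next, I would set $G(S):=(S\times S)/\!\sim$ with componentwise addition, checking that this operation is well-defined on classes (again routine). The class $[\mathbf{0},\mathbf{0}]$ is the identity, and $[\mathbf{b},\mathbf{a}]$ is an additive inverse of $[\mathbf{a},\mathbf{b}]$ since $[\mathbf{a}+\mathbf{b},\mathbf{a}+\mathbf{b}]=[\mathbf{0},\mathbf{0}]$. Hence $G(S)$ is a group.

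Finally, I would show that the map $\iota:S\to G(S)$, $\mathbf{a}\mapsto[\mathbf{a},\mathbf{0}]$, is an injective monoid morphism. It is clearly a morphism, and $\iota(\mathbf{a})=\iota(\mathbf{b})$ means $[\mathbf{a},\mathbf{0}]=[\mathbf{b},\mathbf{0}]$, i.e.\ $\mathbf{a}+\mathbf{0}=\mathbf{b}+\mathbf{0}$, hence $\mathbf{a}=\mathbf{b}$. This exhibits $S$ as a submonoid of the group $G(S)$.

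The main obstacle is really just the transitivity check, which crucially uses cancellation and is the only place where finite generation plays no role — the statement and proof work for arbitrary commutative monoids, though the hypothesis in the proposition restricts us to the finitely generated setting in line with the convention of the paper. No delicate finiteness argument is required.
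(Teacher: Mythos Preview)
Your proof is correct, but it follows a different route from the paper's. You build the Grothendieck group $G(S)=(S\times S)/\!\sim$ intrinsically from $S$, using cancellativity to get transitivity of $\sim$ and then embedding $S$ via $\mathbf{a}\mapsto[\mathbf{a},\mathbf{0}]$. The paper instead exploits the standing hypothesis of finite generation: it fixes a presentation $S=\mathbb{N}^n/\!\sim$, extends the congruence $\sim$ to $\mathbb{Z}^n$ by declaring $\mathbf{u}\sim\mathbf{v}$ whenever $\mathbf{u}+\mathbf{e}\sim\mathbf{v}+\mathbf{e}$ for some (equivalently, any) $\mathbf{e}\in\mathbb{N}^n$ with both sums in $\mathbb{N}^n$, and then sets $G=\mathbb{Z}^n/\!\sim$. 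Your argument is more general (it does not need finite generation, as you observe) and is the classical construction; the paper's argument, while less general, stays within the language of congruences on $\mathbb{N}^n$ and $\mathbb{Z}^n$ that the surrounding sections use, and it produces the group directly in the form $\mathbb{Z}^n/\mathcal{L}$ that is invoked later when relating cancellative congruences to lattice ideals.
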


\begin{proof}
The direct implication is clear. Conversely, if $S = \mathbb{N}^n/\sim$ is a cancellative finitely generated commutative monoid, then $\sim$ can be extended on $\mathbb{Z}^n$ as follows: $\mathbf{u} \sim \mathbf{v}$ if $\mathbf{u} + \mathbf{e} \sim \mathbf{v} + \mathbf{e}$ for some (any) $\mathbf{e} \in \mathbb{N}^n$ such that $\mathbf{u} + \mathbf{e}$ and $\mathbf{v} + \mathbf{e} \in \mathbb{N}^n$. Since $G = \mathbb{Z}^n/\sim$ has a natural group structure and $S \subseteq G$, we are done. 
\end{proof}

The above result shows that our definition of cancellative congruence is equivalent to the usual one (see \cite[p.~44]{Gilmer}).

\subsection{Toric ideals and toric congruences}\mbox{}\par

Suppose now that $G$ is torsion-free and let $G(\mathcal{A})$ denote the subgroup of $G$ generated by $\mathcal{A}$. Since $G$ is torsion-free, then $G \cong \mathbb{Z}^m$, for some $m$. Thus, the semigroup $S$ is isomorphic to a subsemigroup of $\mathbb{Z}^m$. In this case, $S$ is said to be an \textbf{affine semigroup} and  the ideal $I_\mathcal{A}$ is called the \textbf{toric ideal} associated to $\mathcal{A}$. 

Without loss of generality, we may assume that $\mathbf{a}_i \in \mathbb{Z}^d,$ for every $i = 1, \ldots, n$, with $d = \mathrm{rank}(G(\mathcal{A})) \leq m$. Moreover, one can prove that, if $\mathcal{A}$ generates a positive monoid  (see Remark \ref{remark_pos}), there exists a monoid isomorphism under which $\mathbf a_i$ is mapped to an element of $\mathbb{N}^n,\ i = 1, \ldots, n$ (see, e.g.  \cite[Proposition 6.1.5]{BH}), which justifies the use of the term ``positive''.

\begin{lemma}\label{Prop Toric}
If $\mathcal{A} = \{\mathbf{a}_1, \ldots, \mathbf{a}_n\} \subset \mathbb{Z}^d$, then $I_\mathcal{A}$ is prime. 
\end{lemma}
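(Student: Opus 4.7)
The plan is to prove primality of $I_\mathcal{A}$ by showing that the quotient $\Bbbk[\mathbf{X}]/I_\mathcal{A}$ is an integral domain. By Theorem~\ref{Th 1.1}, this quotient is isomorphic to the semigroup algebra $\Bbbk[S]$, where $S$ is the subsemigroup of $G$ generated by $\mathcal{A}$, so it suffices to establish that $\Bbbk[S]$ has no zero divisors.

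To this end, I would embed $\Bbbk[S]$ into a larger algebra that is manifestly a domain. Specifically, let $G(\mathcal{A})$ denote the subgroup of $G$ generated by $\mathcal{A}$. The natural inclusion $S \hookrightarrow G(\mathcal{A})$ of monoids extends to an injective $\Bbbk$-algebra homomorphism $\Bbbk[S] \hookrightarrow \Bbbk[G(\mathcal{A})]$, because distinct elements of $S$ remain distinct in $G(\mathcal{A})$, and the multiplication $\chi^{\mathbf{a}}\chi^{\mathbf{b}}=\chi^{\mathbf{a}+\mathbf{b}}$ is preserved.

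The key observation is that $G(\mathcal{A})$, being a finitely generated subgroup of the torsion-free group $G \cong \mathbb{Z}^m$, is itself torsion-free and finitely generated, hence isomorphic to $\mathbb{Z}^r$ for some $r \leq d$. Consequently $\Bbbk[G(\mathcal{A})] \cong \Bbbk[Y_1^{\pm 1},\dots,Y_r^{\pm 1}]$ is the Laurent polynomial ring in $r$ variables, which is a localization of a polynomial ring and therefore an integral domain. Any subring of a domain is a domain, so $\Bbbk[S]$ is a domain and $I_\mathcal{A}$ is prime.

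I do not expect any real obstacle here; the argument is essentially formal once one recognizes that the torsion-freeness of $G$ is precisely what makes $\Bbbk[G(\mathcal{A})]$ a Laurent polynomial ring. The only point requiring a sentence of care is the injectivity of $\Bbbk[S] \to \Bbbk[G(\mathcal{A})]$, which follows because the basis $\{\chi^{\mathbf{a}} \mid \mathbf{a} \in S\}$ of $\Bbbk[S]$ maps to a $\Bbbk$-linearly independent subset of $\Bbbk[G(\mathcal{A})]$ (distinct group elements give distinct basis vectors).
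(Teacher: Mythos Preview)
Your proposal is correct and follows essentially the same approach as the paper: embed $\Bbbk[S]$ into a Laurent polynomial ring, which is a domain. The paper does this slightly more directly by using the ambient $\mathbb{Z}^d$ itself---since $\mathcal{A}\subset\mathbb{Z}^d$, one has $\Bbbk[S]\hookrightarrow\Bbbk[\mathbb{Z}^d]=\Bbbk[t_1^{\pm},\ldots,t_d^{\pm}]$ without needing to pass through $G(\mathcal{A})$ or invoke the structure theorem for finitely generated abelian groups---but the idea is the same.
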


\begin{proof}
By hypothesis, we have that $\Bbbk[S]$ is isomorphic to the subring $
\Bbbk[\mathbf{t}^{\mathbf{a}_1}, \ldots, \mathbf{t}^{\mathbf{a}_n}]$ of the Laurent polynomial ring $\Bbbk[\mathbb{Z}^d] = \Bbbk[t_1^\pm, \ldots, t_d^\pm]$; in particular, $\Bbbk[S] \cong \Bbbk[\mathbf{X}]/I_\mathcal{A}$ is a domain. Therefore $I_\mathcal{A}$ is prime. 
\end{proof}

\begin{theorem}\label{Th CarPrimos}
Let $I$ be a binomial ideal of $\Bbbk[\mathbf{X}]$. The ideal $I$ is prime if and only if there exists $\mathcal{A} = \{\mathbf{a}_1, \ldots, \mathbf{a}_r\}\subset \mathbb{Z}^d$ such that $$I = I_\mathcal{A}\, \Bbbk[\mathbf{X}] + \langle X_{r+1}, \ldots, X_n \rangle,$$ up to permutation and rescaling of variables.
\end{theorem}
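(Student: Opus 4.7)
The plan is to handle the two implications separately. The converse direction ($\Leftarrow$) is immediate: if $I = I_\mathcal{A}\,\Bbbk[\mathbf{X}] + \langle X_{r+1},\ldots,X_n\rangle$, then $\Bbbk[\mathbf{X}]/I \cong \Bbbk[X_1,\ldots,X_r]/I_\mathcal{A}$, which by Lemma~\ref{Prop Toric} is a domain, so $I$ is prime; permuting and rescaling variables are automorphisms of $\Bbbk[\mathbf{X}]$, hence preserve primality.

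For the forward direction, suppose $I$ is binomial and prime. I would first separate out the variables belonging to $I$: after permutation, assume $X_{r+1},\ldots,X_n \in I$ and $X_1,\ldots,X_r \notin I$, so $\langle X_{r+1},\ldots,X_n\rangle \subseteq I$. Let $J$ be the image of $I$ under the quotient map $\Bbbk[\mathbf{X}] \to \Bbbk[X_1,\ldots,X_r]$ that sends $X_{r+1},\ldots,X_n$ to zero. Then $I = J\,\Bbbk[\mathbf{X}] + \langle X_{r+1},\ldots,X_n\rangle$, and $J$ is a binomial prime ideal of $\Bbbk[X_1,\ldots,X_r]$ containing none of $X_1,\ldots,X_r$; because $J$ is prime, it contains no monomials at all, so $J$ is pure. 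By Remark~\ref{rmk:differentIdealSameCongruence} I can rescale to assume $J$ is unital, so $J = I_{\sim_J}$ is the ideal associated to the congruence $\sim_J$ on $\mathbb{N}^r$.

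It now suffices to write $J = I_\mathcal{A}$ for some $\mathcal{A} \subset \mathbb{Z}^d$. Set $S = \mathbb{N}^r/\!\sim_J$, so that $\Bbbk[S] \cong \Bbbk[X_1,\ldots,X_r]/J$ is a domain. If I can show $S$ embeds in a torsion-free group $G \cong \mathbb{Z}^d$, then letting $\mathbf{a}_i$ be the image of $\mathbf{e}_i$ in $G$ produces $\mathcal{A} = \{\mathbf{a}_1,\ldots,\mathbf{a}_r\}$ with $J = I_\mathcal{A}$ by Theorem~\ref{Th 1.1}. Cancellativity of $S$ is straightforward: if $\mathbf{a}+\mathbf{b} = \mathbf{a}+\mathbf{c}$ in $S$, then $\chi^\mathbf{a}(\chi^\mathbf{b} - \chi^\mathbf{c}) = 0$ in $\Bbbk[S]$, and since $J$ is pure the factor $\chi^\mathbf{a}$ is nonzero, forcing $\chi^\mathbf{b} = \chi^\mathbf{c}$; hence Proposition~\ref{prop:cancellative} embeds $S$ in its Grothendieck group $G$.

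The main obstacle is showing $G$ is torsion-free. My strategy is to observe that $\Bbbk[G]$ is the localization of $\Bbbk[S]$ at the multiplicative set of monomials $\{\chi^\mathbf{s} \mid \mathbf{s} \in S\}$, so $\Bbbk[G]$ is itself a domain. If $G$ contained a nonzero torsion element $t$ of prime order $p$, then
\[
(\chi^t - 1)\bigl(\chi^{(p-1)t} + \cdots + \chi^t + 1\bigr) = \chi^{pt} - 1 = 0
\]
in $\Bbbk[G]$, with both factors nonzero because $0, t, \ldots, (p-1)t$ are distinct group elements indexing linearly independent basis vectors of $\Bbbk[G]$. This would contradict the domain property of $\Bbbk[G]$, so $G$ must be torsion-free, and the embedding $S \hookrightarrow G \cong \mathbb{Z}^d$ completes the argument.
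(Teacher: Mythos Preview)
Your proof is correct and follows the same overall strategy as the paper: split off the variables lying in $I$, reduce to a pure prime binomial ideal $J$, identify the quotient with a monoid algebra $\Bbbk[S]$, and then show that $S$ is cancellative with torsion-free Grothendieck group. The reduction step and the cancellativity argument match the paper's essentially verbatim.

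The one genuine difference is the torsion-free step. The paper argues inside $\Bbbk[\mathbf{X}]$: if $G(S)$ has torsion, pick distinct $\mathbf{a},\mathbf{a}'\in S$ with $n\mathbf{a}=n\mathbf{a}'$, so that $\mathbf{X}^{n\mathbf{u}}-\mathbf{X}^{n\mathbf{u}'}\in I$; then algebraic closedness of $\Bbbk$ is used to factor this into binomials $\mathbf{X}^{\mathbf{u}}-\zeta_n\mathbf{X}^{\mathbf{u}'}$, one of which must lie in the prime $I$, forcing $\mathbf{a}=\mathbf{a}'$. You instead localize to $\Bbbk[G]$ and exploit the universal identity $(\chi^t-1)\sum_{j=0}^{p-1}\chi^{jt}=0$, which needs no roots of unity. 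Your route is a bit cleaner in that algebraic closedness is invoked only once, through Remark~\ref{rmk:differentIdealSameCongruence} when rescaling $J$ to be unital; the paper uses it a second time for the factorization. Conversely, the paper's argument stays entirely in the polynomial ring and avoids introducing the localized ring $\Bbbk[G]$.
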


\begin{proof}
Suppose that $I$ is prime. If $I$ contains monomials, there exists a set of variables, say $X_{r+1}, \ldots, X_n$ (by permuting variables if necessary), such that $I$ is equal to $I'\,  \Bbbk[\mathbf{X}] +  \langle X_{r+1}, \ldots, X_n \rangle$ where $I'$ is a pure prime binomial ideal of $\Bbbk[X_1, \ldots, X_r]$. Therefore, without loss of generality, we may suppose $I=I'$ and $r=n$. Now, by Theorem \ref{Prop1.11ES}, $\Bbbk[\mathbf{X}]/I \cong \Bbbk[S] = \bigoplus_{\mathbf{a} \in S} {\rm Span}_\Bbbk \{ \chi^\mathbf{a} \}$, for some commutative monoid $S$ generated by $\mathcal{A} = \{\mathbf{a}_1, \ldots, \mathbf{a}_n\}$. Recall that the above isomorphism maps $X_i$ to $\lambda_i \chi^{\mathbf{a}_i}$ for some $\lambda_i \in \Bbbk^*,\ i = 1, \ldots, n$. So, by rescaling variables if necessary, we may assume $\lambda_i = 1$ for every $i$. Now, if $S$ is not contained in a group, by Proposition \ref{prop:cancellative}, there exist $\mathbf{a}, \mathbf{a}'$ and $\mathbf{b} \in S$ such that $\mathbf{a} + \mathbf{b} = \mathbf{a}' + \mathbf{b}$ and $\mathbf{a} \neq \mathbf{a}'$. Thus, $\mathbf{X}^{\mathbf{v}} ( \mathbf{X}^\mathbf{u} - \mathbf{X}^{\mathbf{u}'}) \in I$, but $ \mathbf{X}^\mathbf{u} - \mathbf{X}^{\mathbf{u}'} \not\in I$, where $\mathbf{u} \in \deg_\mathcal{A}^{-1}(\mathbf{a}),$ $\mathbf{u}' \in \deg_\mathcal{A}^{-1}(\mathbf{a}')$ and $\mathbf{v} \in \deg_\mathcal{A}^{-1}(\mathbf{b})$. So, since $I$ is prime, we have that $\mathbf{X}^{\mathbf{v}} \in I$ which is a contradiction. On other hand, if $G(S)$ has torsion, there exist two different elements $\mathbf{a}$ and $\mathbf{a}' \in S$ such that $n \mathbf{a} = n \mathbf{a}'$ for some $n \in 	\mathbb{N}$.  Therefore $ \mathbf{X}^{n \mathbf{u}} - \mathbf{X}^{n \mathbf{u}'} \in I$, where $\mathbf{u} \in \deg_\mathcal{A}^{-1}(\mathbf{a})$ and $\mathbf{u}' \in \deg_\mathcal{A}^{-1}(\mathbf{a}')$. Since $\Bbbk$ is algebraically closed and $I$ is prime, $\mathbf{X}^{\mathbf{u}} - \zeta_n \mathbf{X}^{\mathbf{u}'} \in I$, where $\zeta_n$ is a $n-$th root of unity; in particular,  $\mathbf{a} = \mathbf{a'}$ which is a contradiction. Putting all this together, we conclude that $S$ is an affine semigroup.

The opposite implication is a direct consequence of Lemma \ref{Prop Toric}.
\end{proof}

\begin{definition}
A congruence $\sim$ on $\mathbb{N}^n$ is said to be \textbf{toric} if the ideal $I_\sim$ is prime.
\end{definition}

The following result proves that our definition agrees with the one given in \cite{KM}.

\begin{corollary}
A congruence $\sim$ on $\mathbb{N}^n$ is toric if and only if the non-nil elements of $\mathbb{N}^n/\!\sim$ form an affine semigroup.
\end{corollary}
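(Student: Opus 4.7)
The plan is to bootstrap Theorem~\ref{Th CarPrimos} by translating its description of prime binomial ideals into a statement about the monoid $\mathbb{N}^n/\!\sim$.

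For the forward direction, suppose $\sim$ is toric, so that $I_\sim$ is prime. Since $I_\sim$ is unital, Theorem~\ref{Th CarPrimos} together with Remark~\ref{rmk:differentIdealSameCongruence} (which absorbs any rescaling without changing the induced congruence) gives, after a permutation of variables, that $I_\sim = I_\mathcal{A}\,\Bbbk[\mathbf{X}] + \langle X_{r+1},\ldots,X_n\rangle$ for some $\mathcal{A} = \{\mathbf{a}_1,\ldots,\mathbf{a}_r\} \subset \mathbb{Z}^d$. The monomials of $I_\sim$ are exactly those $\mathbf{X}^\mathbf{u}$ with $u_j > 0$ for some $j > r$, so the nil class $E$ of $\sim$ (if any) coincides with $\{\mathbf{u} \in \mathbb{N}^n : u_j > 0 \text{ for some } j > r\}$. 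For exponents supported in $\{1,\ldots,r\}$, the relation $\mathbf{u} \sim \mathbf{v}$ reduces to the toric condition $\sum u_i\mathbf{a}_i = \sum v_i\mathbf{a}_i$ in $\mathbb{Z}^d$, so the non-nil part of $\mathbb{N}^n/\!\sim$ is identified with the affine semigroup $\mathbb{N}\mathbf{a}_1 + \cdots + \mathbb{N}\mathbf{a}_r$.

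Conversely, assume the non-nil part of $\mathbb{N}^n/\!\sim$ is an affine semigroup $T \subset \mathbb{Z}^d$. Each generator $[\mathbf{e}_i]$ is either non-nil or nil (since nil absorbs); after reordering, let $[\mathbf{e}_1],\ldots,[\mathbf{e}_r]$ be non-nil and $[\mathbf{e}_{r+1}],\ldots,[\mathbf{e}_n]$ nil. Set $\mathbf{a}_i := [\mathbf{e}_i] \in T$ for $i \leq r$. Using that the non-nil set is closed under addition, one checks that $[\mathbf{u}]$ is non-nil if and only if $u_j = 0$ for all $j > r$; therefore $T = \mathbb{N}\mathbf{a}_1+\cdots+\mathbb{N}\mathbf{a}_r$ and the nil class is $E = \{\mathbf{u} : u_j > 0 \text{ for some } j > r\}$. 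I will then verify that $J := I_\mathcal{A}\,\Bbbk[\mathbf{X}] + \langle X_{r+1},\ldots,X_n\rangle$, with $\mathcal{A} = \{\mathbf{a}_1,\ldots,\mathbf{a}_r\}$, is a unital binomial ideal inducing $\sim$: its monomials correspond exactly to $E$, and its pure binomials correspond to pairs $\mathbf{u},\mathbf{v}$ supported in $\{1,\ldots,r\}$ with $\sum u_i\mathbf{a}_i = \sum v_i\mathbf{a}_i$, which is exactly $\mathbf{u} \sim \mathbf{v}$ on the non-nil part. By Theorem~\ref{KMThm9.12} together with the maximality in Definition~\ref{def:binomialIdealAssocCong}, we conclude $I_\sim = J$, which is prime by Theorem~\ref{Th CarPrimos}.

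The step I expect to be the main obstacle is the bookkeeping in the converse: verifying that the nil class is generated precisely by the variables $X_{r+1},\ldots,X_n$, with no additional collapse among exponents supported in $\{1,\ldots,r\}$. This hinges on the assumption that the non-nil elements form a semigroup, which ensures closure under addition and rules out any sum of non-nil generators becoming nil; without this the description of $E$ and the matching with $J$ would break.
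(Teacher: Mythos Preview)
Your proposal is correct and follows essentially the same route as the paper: both directions hinge on Theorem~\ref{Th CarPrimos}, and in the converse you identify the nil generators, show the non-nil part is generated by the remaining $[\mathbf e_i]$, and recognize $I_\sim$ as $I_{\mathcal A}\,\Bbbk[\mathbf X]+\langle X_{r+1},\dots,X_n\rangle$. The paper compresses your converse into two sentences, merely noting that $[\mathbf u]+[\mathbf v]=\infty$ forces one summand to be nil (since the non-nil part sits in a group) and then invoking Theorem~\ref{Th CarPrimos} ``again'' without spelling out the identification $I_\sim=J$; your explicit verification that $\sim_J=\sim$ and appeal to Definition~\ref{def:binomialIdealAssocCong}/Theorem~\ref{KMThm9.12} fills in exactly what the paper leaves implicit.
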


\begin{proof}
The direct implication follows from Theorem \ref{Th CarPrimos}. Conversely, we assume that the non-nil elements of $\mathbb{N}^n/\!\sim$ form an affine semigroup $S.$ In this case, we have that $[\mathbf u] + [\mathbf v] = \infty$ implies $[\mathbf u] = \infty$ or $[\mathbf v] = \infty$, for every $\mathbf{u}$ and $\mathbf{v} \in \mathbb{N}^n,$ because $S$ is contained in a group and groups have no nil element. Therefore, since $\mathbb{N}^n/\!\sim$ is generated by the classes $[\mathbf{e}_i]$ modulo $\sim$, $i = 1, \ldots, n$, we obtain that $S$ is generated by $\mathcal{A}  = \{[\mathbf{e}_i] \neq \infty\ \mid\ i = 1, \ldots, n\}$ Now, applying Theorem \ref{Th CarPrimos} again, we conclude that $I_\sim$ is a prime ideal.
\end{proof}

\subsection{Lattice ideals and cancellative congruences}\mbox{}\par

Consider now a subgroup $\mathcal{L}$ of $\mathbb{Z}^n$ and define the following congruence $\sim$ on $\mathbb{N}^n$: \[\mathbf{u} \sim \mathbf{v} \Longleftrightarrow\mathbf{u} - \mathbf{v} \in \mathcal{L}.\] Clearly, $\mathbb{N}^n/\!\sim$ is contained in the group $\mathbb{Z}^n/\mathcal{L}$ and the associated ideal $I_\sim$ is equal to $$I_\mathcal{L} := \{ \mathbf{X}^\mathbf{u} - \mathbf{X}^\mathbf{v}\ \mid \mathbf{u}-\mathbf{v} \in \mathcal{L}\}.$$ 

The subgroups of $\mathbb{Z}^n$ are also called lattices. This justifies the term ``lattice'' in the following definition.

\begin{definition}
Let $\mathcal{L}$ be a subgroup of $\mathbb{Z}^n$ and $\rho : \mathcal{L} \to \Bbbk^*$ be a group homomorphism. The lattice ideal corresponding to $\mathcal{L}$ and $\rho$ is $$I_\mathcal{L}(\rho) := \langle \mathbf{X}^{\mathbf{u}} - \rho(\mathbf{u}-\mathbf{v}) \mathbf{X}^{\mathbf{v}}\ \mid\ \mathbf{u} - \mathbf{v} \in \mathcal{L} \rangle.$$ An ideal $I$ of $\Bbbk[\mathbf X]$ is called a \textbf{lattice ideal} if there is subgroup $\mathcal{L} \subset \mathbb{Z}^n$ and a group homomorphism $\rho : \mathcal{L} \to \Bbbk^*$ such that $I = I_\mathcal{L}(\rho)$.
\end{definition}

Observe that the ideal $I_\mathcal{L}$ above is a lattice ideal for the group homomorphism $\rho : \mathcal{L} \to \Bbbk^*$ such that $\rho(\mathbf{u}) = 1,$ for every $\mathbf{u} \in \mathcal{L}$. Moreover, given a subgroup $\mathcal{L}$ of $\mathbb{Z}^n$, we have that the congruence on $\mathbb{N}^n$ defined by a lattice ideal $I_\mathcal{L}(\rho)$ is the same as the congruence on $\mathbb{N}^n$ defined by $I_\mathcal{L}$, for every group homomorphism $\rho : \mathcal{L} \to \Bbbk^*$.

Let us characterize the cancellative congruences on $\mathbb{N}^n$ in terms of their associated binomial ideals. In order to do this, we first recall the following result from \cite{ES96}.

\begin{proposition}\label{Prop Cor2.5ES}\cite[Corollary 2.5]{ES96} If $I$ is a pure binomial ideal of $\Bbbk[\mathbf{X}]$, then there is a unique group morphism $\rho : \mathcal{L} \subseteq \mathbb{Z}^n \to \Bbbk^*$ such that $I : (\prod_{i=1}^n X_i)^\infty = I_\mathcal{L}(\rho)$.
\end{proposition}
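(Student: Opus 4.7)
The plan is to set $J := I : (\prod_{i=1}^n X_i)^\infty$ and produce $(\mathcal{L}, \rho)$ directly from the binomials of $J$. First I would verify that $J$ is itself a pure binomial ideal: it is binomial by iterating Corollary~\ref{coro:colonIdealIsBinomial} (writing $J$ as a union of the ascending chain of binomial ideals $I : (\prod X_i)^k$), and it is pure because any monomial $\mathbf{X}^\mathbf{u} \in J$ would force the monomial $\mathbf{X}^\mathbf{u} \cdot (\prod X_i)^k$ to lie in the pure ideal $I$, a contradiction. A crucial structural property for what follows is that each $X_i$ is a non-zero divisor modulo $J$, so we may cancel common monomial factors from any binomial in~$J$.

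Next I would define
\[
\mathcal{L} := \bigl\{ \mathbf{u}-\mathbf{v} \in \mathbb{Z}^n \ \bigm|\ \exists\, \lambda \in \Bbbk^*\ \text{with}\ \mathbf{X}^\mathbf{u} - \lambda \mathbf{X}^\mathbf{v} \in J \bigr\}
\]
and the map $\rho : \mathcal{L} \to \Bbbk^*$ sending $\mathbf{u}-\mathbf{v}$ to the corresponding $\lambda$. The fact that $\rho$ is single-valued on each class uses purity: if $\mathbf{X}^\mathbf{u} - \lambda \mathbf{X}^\mathbf{v}$ and $\mathbf{X}^\mathbf{u} - \lambda' \mathbf{X}^\mathbf{v}$ both lie in $J$, then $(\lambda - \lambda')\mathbf{X}^\mathbf{v} \in J$ forces $\lambda = \lambda'$. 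Independence of representative (when $\mathbf{u}-\mathbf{v} = \mathbf{u}'-\mathbf{v}'$, so $\mathbf{u}+\mathbf{v}' = \mathbf{u}'+\mathbf{v}$) and the group-homomorphism property reduce to the following calculation: from $\mathbf{X}^{\mathbf{u}_1}-\lambda_1\mathbf{X}^{\mathbf{v}_1} \in J$ and $\mathbf{X}^{\mathbf{u}_2}-\lambda_2\mathbf{X}^{\mathbf{v}_2} \in J$, multiplying the first by $\mathbf{X}^{\mathbf{u}_2}$ and the second by $\mathbf{X}^{\mathbf{v}_1}$ and adding produces $\mathbf{X}^{\mathbf{u}_1+\mathbf{u}_2}-\lambda_1\lambda_2 \mathbf{X}^{\mathbf{v}_1+\mathbf{v}_2} \in J$, showing both closure under addition in $\mathcal{L}$ and $\rho(\mathbf{m}_1+\mathbf{m}_2)=\rho(\mathbf{m}_1)\rho(\mathbf{m}_2)$. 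Closure under negation is immediate by multiplying a binomial by $-\lambda^{-1}$.

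Having built $(\mathcal{L},\rho)$, I would check the identity $J = I_\mathcal{L}(\rho)$. The inclusion $I_\mathcal{L}(\rho) \subseteq J$ is built into the definitions. For the reverse inclusion, I would use that any binomial ideal is $\Bbbk$-spanned by its binomials (the argument recalled in the proof of Theorem~\ref{Prop1.11ES}), so $J$ is spanned by binomials of the form $\mathbf{X}^\mathbf{u}-\lambda\mathbf{X}^\mathbf{v}$, each of which lies in $I_\mathcal{L}(\rho)$ by construction. Uniqueness of $\rho$ is then automatic: if $\rho'$ also satisfies $I_\mathcal{L}(\rho') = J$, then for every $\mathbf{m}=\mathbf{u}-\mathbf{v}\in\mathcal{L}$ both $\mathbf{X}^\mathbf{u}-\rho(\mathbf{m})\mathbf{X}^\mathbf{v}$ and $\mathbf{X}^\mathbf{u}-\rho'(\mathbf{m})\mathbf{X}^\mathbf{v}$ lie in $J$, and purity forces $\rho'=\rho$.

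The main obstacle I anticipate is the careful verification that $\rho$ is well-defined and multiplicative; the geometry is transparent, but one has to weave together the non-zero-divisor property of the $X_i$ on $\Bbbk[\mathbf{X}]/J$ with the purity of $J$ at the right moments. In particular, the step showing that two different expressions $\mathbf{u}-\mathbf{v}=\mathbf{u}'-\mathbf{v}'$ for the same lattice element produce the same $\lambda$ requires multiplying through by suitable monomials and then cancelling, which is only legitimate because of saturation plus purity—this is the point where the hypothesis that $I$ is pure is indispensable.
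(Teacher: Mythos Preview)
The paper does not supply its own proof of this proposition: it is simply quoted from \cite[Corollary~2.5]{ES96}, so there is nothing in the text to compare your argument against. That said, your outline is correct and is essentially the standard Eisenbud--Sturmfels argument: saturate to get a pure binomial ideal $J$ in which all variables are nonzerodivisors, read off $\mathcal{L}$ and $\rho$ from the binomials of $J$, and use purity together with the nonzerodivisor property to verify that $\rho$ is a well-defined character and that $J=I_{\mathcal{L}}(\rho)$.

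Two small points worth tightening. First, when you say the inclusion $I_{\mathcal{L}}(\rho)\subseteq J$ is ``built into the definitions'', note that a generator $\mathbf{X}^{\mathbf{u}}-\rho(\mathbf{u}-\mathbf{v})\mathbf{X}^{\mathbf{v}}$ of $I_{\mathcal{L}}(\rho)$ need not itself be one of the witnessing binomials from the definition of $\mathcal{L}$; you must invoke the same multiply-and-cancel trick you used for independence of representative to place it in $J$. Second, the uniqueness claim in the statement is for the pair $(\mathcal{L},\rho)$, not just for $\rho$ with $\mathcal{L}$ fixed; you should add the (easy) observation that any lattice $\mathcal{L}'$ with $J=I_{\mathcal{L}'}(\rho')$ is forced to equal $\{\mathbf{u}-\mathbf{v}\mid \exists\,\lambda\in\Bbbk^{*},\ \mathbf{X}^{\mathbf{u}}-\lambda\mathbf{X}^{\mathbf{v}}\in J\}$, hence coincides with your $\mathcal{L}$.
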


Observe that from Proposition \ref{Prop Cor2.5ES}, it follows that no monomial is a zero divisor modulo a lattice ideal.

\begin{corollary}\label{Cor Cong:lattice}
A congruence $\sim$ on $\mathbb{N}^n$ is cancellative if and only if $I_\sim$ is a lattice ideal.
\end{corollary}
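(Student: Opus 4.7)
The proof splits naturally into two implications; both can be handled by appealing to structural results obtained earlier in this section.

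For the forward direction, assume $\sim$ is cancellative. By Proposition \ref{prop:cancellative}, $\sim$ extends to a congruence on $\mathbb{Z}^n$ whose quotient $\mathbb{Z}^n/\!\sim$ is a group containing $\mathbb{N}^n/\!\sim$, and the extension of the presentation map $\pi$ becomes a group homomorphism $\mathbb{Z}^n \to \mathbb{Z}^n/\!\sim$. Let $\mathcal{L} \subseteq \mathbb{Z}^n$ be its kernel, which is automatically a subgroup. A direct verification shows that, for $\mathbf{u}, \mathbf{v} \in \mathbb{N}^n$, one has $\mathbf{u} \sim \mathbf{v}$ if and only if $\mathbf{u} - \mathbf{v} \in \mathcal{L}$. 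The containment $I_\sim \subseteq I_\mathcal{L}$ is then immediate from the defining generators of $I_\sim$; conversely, any $\mathbf{w} \in \mathcal{L}$ decomposes as $\mathbf{w} = \mathbf{w}_+ - \mathbf{w}_-$ with $\mathbf{w}_\pm \in \mathbb{N}^n$ of disjoint support, and $\mathbf{w}_+ \sim \mathbf{w}_-$ yields $\mathbf{X}^{\mathbf{w}_+} - \mathbf{X}^{\mathbf{w}_-} \in I_\sim$, giving $I_\mathcal{L} \subseteq I_\sim$. Hence $I_\sim = I_\mathcal{L} = I_\mathcal{L}(1)$ is a lattice ideal.

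For the converse, suppose $I_\sim = I_\mathcal{L}(\rho)$ for some subgroup $\mathcal{L} \subseteq \mathbb{Z}^n$ and character $\rho : \mathcal{L} \to \Bbbk^*$. To show that $\mathbb{N}^n/\!\sim$ is cancellative, assume $[\mathbf{u}] + [\mathbf{w}] = [\mathbf{v}] + [\mathbf{w}]$ in $\mathbb{N}^n/\!\sim$, that is, $\mathbf{u} + \mathbf{w} \sim \mathbf{v} + \mathbf{w}$. Then
\[
\mathbf{X}^{\mathbf{w}}\bigl(\mathbf{X}^{\mathbf{u}} - \mathbf{X}^{\mathbf{v}}\bigr) = \mathbf{X}^{\mathbf{u}+\mathbf{w}} - \mathbf{X}^{\mathbf{v}+\mathbf{w}} \in I_\sim.
\]
By the observation recorded immediately after Proposition \ref{Prop Cor2.5ES}, no monomial is a zero divisor modulo a lattice ideal, so $\mathbf{X}^{\mathbf{u}} - \mathbf{X}^{\mathbf{v}} \in I_\sim$, whence $\mathbf{u} \sim \mathbf{v}$ and $[\mathbf{u}] = [\mathbf{v}]$.

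The subtlest step will be the forward implication: confirming that the lattice $\mathcal{L}$ built from the Grothendieck-group extension recovers $I_\sim$ exactly, rather than a larger ideal or a twisted variant $I_\mathcal{L}(\rho)$ with nontrivial $\rho$. This requires $I_\sim$ to be pure unital, which is automatic because a cancellative monoid admits no nil element (otherwise the identity $\infty = \infty + \mathbf{e}$ combined with cancellation would force every $\mathbf{e}$ to be $\mathbf{0}$); purity removes any ambiguity from the character, pinning $\rho$ to be trivial on the nose.
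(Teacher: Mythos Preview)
Your proof is correct. The forward direction matches the paper's approach closely: both extend the presentation map to a group homomorphism $\mathbb{Z}^n\to G$ via Proposition~\ref{prop:cancellative}, take its kernel $\mathcal{L}$, and conclude $I_\sim=I_\mathcal{L}$. You spell out the two containments explicitly (including the $\mathbf{w}_+-\mathbf{w}_-$ decomposition), whereas the paper simply observes that $\mathcal{L}$ and $\sim$ determine the same congruence and invokes the fact that a pure unital ideal is uniquely determined by its congruence.

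The converse is where you diverge. The paper reduces to the observation that $I_\mathcal{L}(\rho)$ and $I_\mathcal{L}$ induce the same congruence, and then notes that $\mathbb{N}^n/\!\sim$ sits inside the group $\mathbb{Z}^n/\mathcal{L}$, so cancellativity follows from Proposition~\ref{prop:cancellative}. You instead argue cancellation directly at the level of the ideal: from $\mathbf{X}^{\mathbf{w}}(\mathbf{X}^{\mathbf{u}}-\mathbf{X}^{\mathbf{v}})\in I_\sim$ you invoke the non-zero-divisor property of monomials modulo a lattice ideal (the remark after Proposition~\ref{Prop Cor2.5ES}) to cancel $\mathbf{X}^{\mathbf{w}}$. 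Your route is more algebraic and leans on Proposition~\ref{Prop Cor2.5ES}, while the paper's is more structural and reuses Proposition~\ref{prop:cancellative}; both are short and either is perfectly adequate here. One small remark: in your displayed line you should strictly carry a scalar $\lambda\in\Bbbk^*$ (since $\mathbf{u}+\mathbf{w}\sim\mathbf{v}+\mathbf{w}$ a priori only gives $\mathbf{X}^{\mathbf{u}+\mathbf{w}}-\lambda\mathbf{X}^{\mathbf{v}+\mathbf{w}}\in I_\sim$), though in fact $\lambda=1$ is forced once you know $I_\sim$ is pure unital, as your final paragraph explains.
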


\begin{proof}
By Proposition \ref{prop:cancellative}, $\sim$ is cancellative if and only if $\mathbb{N}^n/\sim$ is contained in a group $G$. Thus, the natural projection $\pi : \mathbb{N}^n \to \mathbb{N}^n/\sim$ can be extended to a group homorphism $\bar \pi : \mathbb{Z}^n \to G$ whose restriction to $\mathbb{N}^n$ is $\pi$. Since the kernel, $\mathcal{L},$ of $\bar\pi$ is a subgroup of $\mathbb{Z}^n$ that defines the same congruence as $\sim$, we conclude that both ideals $I_\sim$ and $I_\mathcal{L}$ are equal. For the converse, we first note that the congruence on $\mathbb{N}^n$ defined by a lattice ideal $I_\mathcal{L}(\rho)$ is the same as the congruence on $\mathbb{N}^n$ defined by $I_\mathcal{L}$, for every group homomorphism $\rho : \mathcal{L} \subset \mathbb{Z}^n \to \Bbbk^*$ (see the comment after equation \eqref{simI}). Now, it suffices to note that if $I_\sim = I_\mathcal{L}$ for some subgroup $\mathcal{L}$ of $\mathbb{Z}^n$, then $\mathbb{N}^n/\sim$ is contained in $\mathbb{Z}^n/\mathcal{L}$.
\end{proof}

Observe that a lattice ideal $I_\mathcal{L}$ is not prime in general. Indeed, $I = \langle X^2 - Y^2 \rangle$ is a lattice ideal corresponding to the subgroup of $\mathbb{Z}^2$ generated by $(2,-2)$ which is clearly not prime. Let us give a necessary and sufficient condition for a lattice ideal to be prime.

\begin{definition}
Let $\mathcal{L}$ be subgroup of $\mathbb{Z}^n$ and set $$\mathrm{Sat}(\mathcal{L}) := (\mathbb{Q} \otimes_\mathbb{Z} \mathcal{L}) \cap \mathbb{Z}^n = \{ \mathbf{u} \in \mathbb{Z}^n\ \mid\ d\, \mathbf{u} \in \mathcal{L}\ \text{for some}\ d \in \mathbb{Z} \}.$$ Clearly, $\mathrm{Sat}(\mathcal{L})$ is subgroup of $\mathbb{Z}^n$ and it is called the \textbf{saturation} of $\mathcal{L}$. We say that $\mathcal{L}$ is \textbf{saturated} if $\mathcal L = \mathrm{Sat}(\mathcal L)$.
\end{definition}

\begin{proposition}
A lattice ideal $I_\mathcal{L}(\rho)$ is prime if and only if $\mathcal{L}$ is saturated.
\end{proposition}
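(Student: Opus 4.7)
The strategy is to first reduce to the special case $\rho \equiv 1$ by rescaling the variables, and then prove that the unital lattice ideal $I_\mathcal{L}$ is prime if and only if $\mathcal{L}$ is saturated, using Lemma~\ref{Prop Toric} for one direction and a cyclotomic factorization for the other.

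For the reduction, I will exploit that $\Bbbk^*$ is divisible (since $\Bbbk$ is algebraically closed, every $x^n=a\in\Bbbk^*$ has a root), and hence injective in the category of abelian groups. Consequently any character $\rho : \mathcal{L} \to \Bbbk^*$ extends to a character $\tilde\rho : \mathbb{Z}^n \to \Bbbk^*$. Setting $\mu_i = \tilde\rho(\mathbf{e}_i)^{-1}$, the $\Bbbk$-algebra automorphism $\psi$ of $\Bbbk[\mathbf{X}]$ sending $X_i \mapsto \mu_i X_i$ carries each generator $\mathbf{X}^\mathbf{u} - \mathbf{X}^\mathbf{v}$ of $I_\mathcal{L}$ (with $\mathbf{u}-\mathbf{v} \in \mathcal{L}$) to $\boldsymbol{\mu}^\mathbf{u}\bigl(\mathbf{X}^\mathbf{u} - \rho(\mathbf{u}-\mathbf{v})\,\mathbf{X}^\mathbf{v}\bigr)$, a nonzero scalar multiple of a generator of $I_\mathcal{L}(\rho)$. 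Thus $\psi(I_\mathcal{L}) = I_\mathcal{L}(\rho)$, so $I_\mathcal{L}(\rho)$ is prime if and only if $I_\mathcal{L}$ is; this is also consistent with Remark~\ref{rmk:differentIdealSameCongruence}, since the two ideals induce the same congruence on $\mathbb{N}^n$.

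For the forward implication, assuming $\mathcal{L}$ is saturated, the group $\mathbb{Z}^n/\mathcal{L}$ is finitely generated and torsion-free, hence free. Therefore the monoid $\mathbb{N}^n/\!\sim$ associated to $\mathcal{L}$ embeds into a free abelian group and is an affine semigroup generated by $\mathcal{A} = \{[\mathbf{e}_1],\ldots,[\mathbf{e}_n]\}$. By Theorem~\ref{Th 1.1} one has $I_\mathcal{L} = I_\mathcal{A}$, which is prime by Lemma~\ref{Prop Toric}.

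For the converse, suppose $\mathcal{L} \subsetneq \mathrm{Sat}(\mathcal{L})$ and choose $\mathbf{u} \in \mathrm{Sat}(\mathcal{L}) \setminus \mathcal{L}$ with $d\mathbf{u} \in \mathcal{L}$ for some $d > 1$. Decomposing $\mathbf{u}$ into its positive and negative parts yields $\mathbf{v}, \mathbf{w} \in \mathbb{N}^n$ with disjoint supports such that $\mathbf{u} = \mathbf{v} - \mathbf{w}$, and then $\mathbf{X}^{d\mathbf{v}} - \mathbf{X}^{d\mathbf{w}} \in I_\mathcal{L}$. Using a primitive $d$-th root of unity $\zeta_d \in \Bbbk$, this element factors as
\[
\mathbf{X}^{d\mathbf{v}} - \mathbf{X}^{d\mathbf{w}} \;=\; \prod_{k=0}^{d-1}\bigl(\mathbf{X}^\mathbf{v} - \zeta_d^k\, \mathbf{X}^\mathbf{w}\bigr).
\]
Since $\mathbf{v}-\mathbf{w} = \mathbf{u}\notin \mathcal{L}$, the classes $[\mathbf{v}]$ and $[\mathbf{w}]$ are distinct in $\mathbb{N}^n/\!\sim$, so under the isomorphism $\Bbbk[\mathbf{X}]/I_\mathcal{L} \cong \Bbbk[\mathbb{N}^n/\!\sim]$ each factor maps to $\chi^{[\mathbf{v}]} - \zeta_d^k\, \chi^{[\mathbf{w}]}$, a nonzero $\Bbbk$-linear combination of distinct basis elements. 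Hence none of the factors lies in $I_\mathcal{L}$, while their product does, showing that $I_\mathcal{L}$ is not prime. The main obstacle I anticipate is keeping the rescaling bookkeeping straight (sign conventions and which side is $\rho$ versus $\rho^{-1}$); once the rescaling reduction is in place, the forward direction is a direct appeal to Lemma~\ref{Prop Toric} and the converse is a standard cyclotomic argument.
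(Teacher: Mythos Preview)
Your proof is correct and takes a somewhat different, more explicit route than the paper's. The paper argues via the congruence framework: the congruence induced by $I_\mathcal{L}(\rho)$ on $\mathbb{N}^n$ coincides with that of $I_\mathcal{L}$, the quotient monoid sits inside the group $\mathbb{Z}^n/\mathcal{L}$, and this group is torsion-free precisely when $\mathcal{L}$ is saturated; the conclusion then follows (implicitly) from Theorem~\ref{Th CarPrimos}, the classification of binomial primes. Your approach instead (i) replaces the congruence reduction by an explicit rescaling automorphism of $\Bbbk[\mathbf{X}]$, using injectivity of $\Bbbk^*$ to extend $\rho$, and (ii) for the converse, produces a concrete cyclotomic factorization witnessing non-primality rather than invoking the full classification---in effect reproving the torsion half of Theorem~\ref{Th CarPrimos} in situ. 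The paper's route is shorter because it leverages structure already established, while yours is more self-contained and makes the obstruction to primality visible.

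One small caveat worth tightening: in positive characteristic $p$ with $p \mid d$, there is no primitive $d$-th root of unity in $\Bbbk$, so your displayed factorization should be stated over all $d$-th roots of unity counted with multiplicity; equivalently, writing $d = p^a m$ with $\gcd(m,p)=1$, one has $\mathbf{X}^{d\mathbf{v}} - \mathbf{X}^{d\mathbf{w}} = \prod_{k=0}^{m-1}\bigl(\mathbf{X}^\mathbf{v} - \zeta_m^{\,k}\,\mathbf{X}^\mathbf{w}\bigr)^{p^a}$. The rest of your argument is unaffected, since each binomial factor still maps to a nonzero element of $\Bbbk[\mathbb{N}^n/\!\sim]$.
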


\begin{proof}
By using the same argument as in the proof of Corollary \ref{Cor Cong:lattice}, we obtain that $\mathbb{Z}^n/\!\sim\, = \mathbb{Z}^n/\mathcal{L}$, where $\sim$ the congruence defined by $I_\mathcal{L}(\rho)$ on $\mathbb{Z}^n$. Now, since $\mathbb{Z}^n/\mathcal{L}$ is the group generated by $\mathbb{N}^n/\!\sim$, and $\mathbb{Z}^n/\mathcal{L}$ is torsion-free if and only if $\mathcal{L}$ is saturated, we obtain the desired equivalence.
\end{proof}

Notice that the congruence defined by $\mathcal{L}$ is contained in the congruence defined by $\mathrm{Sat}(\mathcal{L})$. In fact,  $\mathrm{Sat}(\mathcal{L})$ defines the smallest toric congruence on $\mathbb{N}^n$ containing the congruence defined by $\mathcal{L}$ on $\mathbb{N}^n$. Therefore, we may say each cancellative congruence \emph{has exactly one toric congruence associated}.

The primary decomposition of a lattice ideal $I_\mathcal{L}(\rho)$ can be completely described in terms of $\mathcal{L}$ and $\rho$. Let us reproduce this result. For this purpose, we need additional notation.

\begin{definition}
If $p$ is a prime number, we define $\mathrm{Sat}_p( \mathcal{L})$ and ${\rm Sat'}_p( \mathcal{L})$ to be the largest sublattices of $\mathrm{Sat}( \mathcal{L})$ containing $L$ such that $\mathrm{Sat}_p( \mathcal{L})/ \mathcal{L}$ has order a power of $p$ and ${\rm Sat'}_p( \mathcal{L})/ \mathcal{L}$ has order relatively prime to $p.$ If $p=0,$ we adopt the convention that $\mathrm{Sat}_p( \mathcal{L})= \mathcal{L}$ and ${\rm Sat'}_p( \mathcal{L})=\mathrm{Sat}( \mathcal{L}).$
\end{definition}

\begin{theorem}\cite[Corollaries 2.2 and 2.5]{ES96}\label{Corolario 2.2ES}
Let $\mathrm{char}(\Bbbk)=p \geq 0$ and consider a group morphsim $\rho : \mathcal{L} \subseteq \mathbb{Z}^n \to \Bbbk^*$. If the order of ${\rm Sat'}_p(\mathcal{L})/ \mathcal{L}$ is $g$, there are $g$ distinct group morphisms $\rho_1, \ldots, \rho_g$  extending $\rho$ to $\mathrm{Sat'}_p(\mathcal{L})$ and for each $j \in \{1, \ldots, g\}$ a unique group morphism $\rho'_j$ extending $\rho$ to $\mathrm{Sat}(\mathcal{L})$. Moreover, there is a unique group morphism $\rho'$ extending $\rho$ to $\mathrm{Sat}_p(\mathcal{L})$. The
radical, associated primes and minimal primary decomposition of
$I_\mathcal{L}(\rho) \subset \Bbbk[\mathbf{X}]$ are: $$ \sqrt{I_\mathcal{L}(\rho)}=I_{\mathrm{Sat}_p(\mathcal{L})}(\rho'), $$ $$
{\rm Ass}(\Bbbk[\mathbf X]/I_\mathcal{L}(\rho))= \{ I_{\mathrm{Sat}(\mathcal{L})}(\rho'_j) \mid j=1,\ldots, g \} $$
and $$ I_{\mathcal{L}}(\rho) = \bigcap_{j=1}^g I_{\mathrm{Sat}'_p(\mathcal{L})}(\rho_j) $$ where
$I_{\mathrm{Sat}'_p(\mathcal{L})}(\rho_j)$ is $I_{\mathrm{Sat}(\mathcal{L})}(\rho'_j)$-primary. In particular, if $p=0,$
then $I_\mathcal{L}(\rho)$ is a radical ideal. The associated primes
$I_{\mathrm{Sat}(\mathcal{L})}(\rho'_j)$ of $I_\mathcal{L}(\rho)$ are all minimal and have the same codimension $\mathrm{rank}(\mathcal{L}).$
\end{theorem}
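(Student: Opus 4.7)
The plan is to prove the four assertions (extensions of $\rho$, primary decomposition, radical, and associated primes with codimension) by reducing everything to a character-theoretic analysis of finite abelian groups, combined with the Frobenius behavior in characteristic $p$.

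First I would handle the existence and enumeration of extensions. The quotient $\mathrm{Sat}'_p(\mathcal{L})/\mathcal{L}$ is a finite abelian group of order $g$ coprime to $p$. Since $\Bbbk$ is algebraically closed, it contains all $g$-th roots of unity and the character group $\mathrm{Hom}(\mathrm{Sat}'_p(\mathcal{L})/\mathcal{L},\Bbbk^*)$ has order $g$; this is what makes extensions of $\rho$ a torsor under this character group and accounts for the $g$ distinct $\rho_j$. For $\mathrm{Sat}_p(\mathcal{L})/\mathcal{L}$, a finite abelian $p$-group, the only $p$-power root of unity in $\Bbbk$ is $1$ (including the case $p=0$ interpreted correctly), so there is exactly one extension $\rho'$; the same uniqueness argument extends $\rho_j$ uniquely to $\rho'_j$ on $\mathrm{Sat}(\mathcal{L})=\mathrm{Sat}_p(\mathcal{L})+\mathrm{Sat}'_p(\mathcal{L})$.

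Next I would establish the primary decomposition $I_\mathcal{L}(\rho)=\bigcap_{j=1}^g I_{\mathrm{Sat}'_p(\mathcal{L})}(\rho_j)$. The inclusion $\subseteq$ is immediate since each $\rho_j$ extends $\rho$, so every generator $\mathbf{X}^\mathbf{u}-\rho(\mathbf{u}-\mathbf{v})\mathbf{X}^\mathbf{v}$ of $I_\mathcal{L}(\rho)$ lies in each $I_{\mathrm{Sat}'_p(\mathcal{L})}(\rho_j)$. For the reverse inclusion, observe that the ambient ring $\Bbbk[\mathbf{X}]/I_\mathcal{L}(\rho)$ carries an action of the finite group $\Gamma=\mathrm{Hom}(\mathrm{Sat}'_p(\mathcal{L})/\mathcal{L},\Bbbk^*)$ of order $g$ (coprime to $p$), coming from the rescaling action associated to the characters; the ideals $I_{\mathrm{Sat}'_p(\mathcal{L})}(\rho_j)$ are exactly the $\Gamma$-translates of one of them. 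Averaging over $\Gamma$ decomposes $\Bbbk[\mathbf{X}]/I_\mathcal{L}(\rho)$ as a direct sum of isotypic components indexed by $\Gamma$, and the intersection of the $g$ ideals captures exactly the ``$\rho$-isotypic'' component, which is $I_\mathcal{L}(\rho)$ itself. This is the main obstacle: making the averaging argument precise requires verifying that $|\Gamma|$ is invertible in $\Bbbk$, which is exactly why one peels off the $p$-primary part $\mathrm{Sat}_p(\mathcal{L})/\mathcal{L}$ before averaging.

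Then I would show each $I_{\mathrm{Sat}'_p(\mathcal{L})}(\rho_j)$ is primary with radical $I_{\mathrm{Sat}(\mathcal{L})}(\rho'_j)$. Primeness of the radical follows from Proposition on lattice ideals (prime iff lattice is saturated), since $\mathrm{Sat}(\mathcal{L})$ is saturated. To see that $I_{\mathrm{Sat}'_p(\mathcal{L})}(\rho_j)$ is actually $I_{\mathrm{Sat}(\mathcal{L})}(\rho'_j)$-primary, note that the quotient $\mathrm{Sat}(\mathcal{L})/\mathrm{Sat}'_p(\mathcal{L})$ is a finite abelian $p$-group; if $p>0$, then for any $\mathbf{u}-\mathbf{v}\in\mathrm{Sat}(\mathcal{L})$ some $p$-power multiple $p^k(\mathbf{u}-\mathbf{v})$ lies in $\mathrm{Sat}'_p(\mathcal{L})$, and raising $\mathbf{X}^\mathbf{u}-\rho'_j(\mathbf{u}-\mathbf{v})\mathbf{X}^\mathbf{v}$ to the $p^k$-th power (using Frobenius: $(a-b)^{p^k}=a^{p^k}-b^{p^k}$) lands in $I_{\mathrm{Sat}'_p(\mathcal{L})}(\rho_j)$ because the character $\rho'_j$ satisfies $\rho'_j(\mathbf{u}-\mathbf{v})^{p^k}=\rho_j(p^k(\mathbf{u}-\mathbf{v}))$; if $p=0$, then $\mathrm{Sat}'_p(\mathcal{L})=\mathrm{Sat}(\mathcal{L})$ and the ideal is already prime, whence $I_\mathcal{L}(\rho)$ is radical.

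Finally, combining these steps yields everything else. The radical is $\bigcap_j I_{\mathrm{Sat}(\mathcal{L})}(\rho'_j)$; since all $\rho'_j$ agree on $\mathrm{Sat}_p(\mathcal{L})$ (all equal to $\rho'$) and their differences parametrize all characters of $\mathrm{Sat}(\mathcal{L})/\mathrm{Sat}_p(\mathcal{L})\cong \mathrm{Sat}'_p(\mathcal{L})/\mathcal{L}$, an averaging argument analogous to the one above (now applied one level up) identifies this intersection with $I_{\mathrm{Sat}_p(\mathcal{L})}(\rho')$. The associated primes are exactly the $I_{\mathrm{Sat}(\mathcal{L})}(\rho'_j)$ because the primary decomposition is minimal (distinct $\rho'_j$ give distinct primes), and all have codimension equal to $\mathrm{rank}(\mathrm{Sat}(\mathcal{L}))=\mathrm{rank}(\mathcal{L})$ since $\Bbbk[\mathbf{X}]/I_{\mathrm{Sat}(\mathcal{L})}(\rho'_j)\cong \Bbbk[\mathbb{Z}^n/\mathrm{Sat}(\mathcal{L})]$ is the coordinate ring of a torus of the complementary dimension.
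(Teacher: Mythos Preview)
The paper does not supply its own proof of this theorem: it is stated with the attribution \cite[Corollaries~2.2 and~2.5]{ES96} and no argument is given. So there is nothing in the paper to compare your proposal against directly; the relevant comparison is with the original Eisenbud--Sturmfels proof, and your outline is essentially a faithful sketch of their argument (character-counting for extensions, averaging over the prime-to-$p$ part of the saturation quotient to split the ideal, and using Frobenius for the $p$-primary part).

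Two small points are worth tightening. First, in the primaryness step you verify that $\sqrt{I_{\mathrm{Sat}'_p(\mathcal{L})}(\rho_j)} = I_{\mathrm{Sat}(\mathcal{L})}(\rho'_j)$ via Frobenius, but you do not actually argue that $I_{\mathrm{Sat}'_p(\mathcal{L})}(\rho_j)$ is primary (having a prime radical is not sufficient); the clean way to finish is to observe that after inverting the variables the quotient becomes the group algebra of the finite $p$-group $\mathrm{Sat}(\mathcal{L})/\mathrm{Sat}'_p(\mathcal{L})$ over a field of characteristic $p$, which is local Artinian, and then pull primaryness back since variables are nonzerodivisors modulo any lattice ideal. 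Second, the quotient $\Bbbk[\mathbf{X}]/I_{\mathrm{Sat}(\mathcal{L})}(\rho'_j)$ is an affine semigroup ring, not literally the coordinate ring of a torus; your codimension claim is correct, but the isomorphism you wrote identifies the localization at the variables, not the quotient itself.
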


\subsection{Mesoprime ideals and prime congruences}\mbox{}\par

Given $\delta \subseteq \{1, \ldots, n\}$, set  $\mathbb{N}^\delta := \{ (u_1, \ldots, u_n) \in \mathbb{N}^n\ \mid\ u_i = 0,\ \text{for all}\ i \not\in \delta \}$ and define $\mathbb{Z}^\delta$ as the subgroup of $\mathbb{Z}^n$ generated by $\mathbb{N}^\delta$. Morover, if $\delta = \varnothing,$ by convention, then $\mathbb{Z}^\delta = \{ \mathbf{0} \} \subset \mathbb{Z}^n$.

\begin{definition}
Given $\delta \subseteq \{1, \ldots, n\}$ and a group homomorphism $\rho : \mathcal{L} \subseteq \mathbb{Z}^\delta \to \Bbbk^*$, a $\delta-$\textbf{mesoprime ideal} is an ideal of the form $$I_\mathcal{L}(\rho) + \mathfrak{p}_{\delta^c} $$ with $\mathfrak{p}_{\delta^c} := \langle X_j \mid j \not\in \delta \rangle.$  By convention, $\mathfrak{p}_{\varnothing^c} = \langle X_1, \ldots, X_n \rangle$ and $\mathfrak{p}_{\varnothing} = \langle 0 \rangle$.
\end{definition}

\begin{example}
\mbox{}\par
\begin{enumerate}
\item The ideal $\langle X_1^{17} - 1,X_2 \rangle \subset \Bbbk[X_1,X_2]$ is mesoprime for $\delta = \{1\}$ 
\item By Theorem \ref{Th CarPrimos}, every binomial prime ideal is mesoprime, for a suitable $\delta$. 
\item Lattice ideals are mesoprime for $\delta = \{1, \ldots, n\}$.
\end{enumerate}
\end{example}

Due to Theorem \ref{Corolario 2.2ES}, a mesoprime ideal can be understood as a condensed expression that includes all the information necessary to produce the primary decomposition of the ideal simply by using arithmetic arguments.

Observe that the congruence on $\mathbb{N}^n$ defined by $I_\mathcal{L} + \mathfrak{p}_{\delta^c} $ is the same as the congruence defined by $I_\mathcal{L}(\rho) + \mathfrak{p}_{\delta^c} $, for every $\delta \subseteq \{1, \ldots, n\}$ and every group homomorphism $\rho : \mathcal{L} \subseteq \mathbb{Z}^\delta \to \Bbbk^*$.

\begin{lemma}\label{Lema varZD}
Let $\delta \subseteq \{1, \ldots, n\}$. If $I$ is a $\delta-$mesoprime ideal, then $I : X_i = I,$ for all $i \in \delta$. Equivalently,  $I : (\prod_{i \in \delta} X_i)^\infty = I$.
\end{lemma}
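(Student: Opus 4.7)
The plan is to reduce the statement to the known fact (Proposition~\ref{Prop Cor2.5ES}, plus its noted consequence) that no monomial is a zero divisor modulo a lattice ideal, by factoring out the monomial prime $\mathfrak{p}_{\delta^c}$.

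First I would set up the reduction. Write $I = I_\mathcal{L}(\rho) + \mathfrak{p}_{\delta^c}$ with $\mathcal{L} \subseteq \mathbb{Z}^\delta$ and $\rho: \mathcal{L}\to\Bbbk^*$, and consider the projection
\[
\pi: \Bbbk[\mathbf{X}] \twoheadrightarrow \Bbbk[\mathbf{X}]/\mathfrak{p}_{\delta^c} \;\cong\; \Bbbk[X_i : i \in \delta].
\]
Because the generators $\mathbf{X}^{\mathbf{u}} - \rho(\mathbf{u}-\mathbf{v})\mathbf{X}^{\mathbf{v}}$ of $I_\mathcal{L}(\rho)$ involve only the variables $X_i$ with $i \in \delta$ (since $\mathcal{L}\subseteq\mathbb{Z}^\delta$), the image $\pi(I)$ is exactly the lattice ideal $I_\mathcal{L}(\rho)\subseteq \Bbbk[X_i : i\in\delta]$, and $I = \pi^{-1}(\pi(I))$.

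Next I would invoke the consequence of Proposition~\ref{Prop Cor2.5ES} explicitly recorded after its statement: in $\Bbbk[X_i : i\in\delta]$, no monomial is a zero divisor modulo the lattice ideal $I_\mathcal{L}(\rho)$. Fix $i \in \delta$ and suppose $X_i f \in I$. Applying $\pi$ gives $X_i\,\pi(f) \in I_\mathcal{L}(\rho)$ inside $\Bbbk[X_j : j\in\delta]$; since $X_i$ is a nonzerodivisor modulo $I_\mathcal{L}(\rho)$, we conclude $\pi(f) \in I_\mathcal{L}(\rho)$, hence $f \in \pi^{-1}(I_\mathcal{L}(\rho)) = I$. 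This proves $I : X_i \subseteq I$, and the reverse inclusion is trivial.

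For the equivalent reformulation, iterate: if $I:X_i = I$ and $I:X_j = I$ for $i,j \in \delta$, then $I:(X_i X_j) = (I:X_i):X_j = I:X_j = I$. An obvious induction on the degree of a monomial in $\{X_i : i\in\delta\}$ then gives $I : M = I$ for every such monomial, and taking the union over all powers yields $I : \bigl(\prod_{i\in\delta} X_i\bigr)^\infty = I$. The main (modest) obstacle is just the bookkeeping to verify that $\pi(I) = I_\mathcal{L}(\rho)$ precisely, i.e.\ that the generators of $\mathfrak{p}_{\delta^c}$ do not contribute anything new after taking the colon; this follows because $\mathcal{L}\subseteq \mathbb{Z}^\delta$ ensures the binomial generators of $I_\mathcal{L}(\rho)$ live entirely in $\Bbbk[X_i : i\in\delta]$.
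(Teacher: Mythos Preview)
Your proof is correct and follows essentially the same approach as the paper: both arguments reduce to the fact (from Proposition~\ref{Prop Cor2.5ES}) that no monomial is a zero divisor modulo a lattice ideal. The only cosmetic difference is that you pass explicitly to the quotient $\Bbbk[\mathbf{X}]/\mathfrak{p}_{\delta^c}\cong\Bbbk[X_i:i\in\delta]$, whereas the paper accomplishes the same reduction by saying ``without loss of generality, no term of $f$ lies in $\mathfrak{p}_{\delta^c}$.''
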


\begin{proof}
If $I$ is a $\delta-$mesoprime ideal, there exists $\rho : \mathcal{L} \subseteq \mathbb{Z}^\delta \to \Bbbk^*$ such that $I = I_\mathcal{L}(\rho) + \mathfrak{p}_{\delta^c}.$ Let $X_i f \in I,\ i \in \delta$. We want to show that $f \in I$. So, without loss of generality, we may assume that no term of $f$ lies in $\mathfrak{p}_{\delta^c}$. In this case, $X_i f \in I_\mathcal{L}(\rho)$. Now, by Proposition \ref{Prop Cor2.5ES}, we conclude that $f \in I_\mathcal{L}(\rho)$, and hence $f \in I$.
\end{proof}

\begin{definition}
A congruence $\sim$ on $\mathbb{N}^n$ is said to be \textbf{prime} if the ideal $I_\sim$ is mesoprime for some $\delta \subseteq \{1, \ldots, n\}$.
\end{definition}

Let us prove that this notion of prime congruence is the same as the usual one (see \cite[p.~44]{Gilmer}).

\begin{proposition}
A congruence $\sim$ on $\mathbb{N}^n$ is \textbf{prime} if and only if every element of $\mathbb{N}^n/\!\sim$ is either nil or cancellable.
\end{proposition}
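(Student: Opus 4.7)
The plan is to obtain each implication directly from the interplay between $I_\sim$, its nil class, and the non-nil submonoid of $\mathbb{N}^n/\!\sim$.

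For the forward direction, assume $I_\sim = I_{\mathcal{L}}(\rho) + \mathfrak{p}_{\delta^c}$ is $\delta$-mesoprime for some $\mathcal{L}\subseteq \mathbb{Z}^\delta$. If $\mathbf{u} \notin \mathbb{N}^\delta$, then $\mathbf{X}^{\mathbf{u}} \in \mathfrak{p}_{\delta^c} \subseteq I_\sim$, which forces $[\mathbf{u}] = \infty$, a nil element. If instead $\mathbf{u} \in \mathbb{N}^\delta$, cancellability of $[\mathbf{u}]$ is immediate from Lemma~\ref{Lema varZD}: a relation $[\mathbf{b}]+[\mathbf{u}] = [\mathbf{c}]+[\mathbf{u}]$ translates into $\mathbf{X}^{\mathbf{u}}(\mathbf{X}^{\mathbf{b}} - \mathbf{X}^{\mathbf{c}}) \in I_\sim$, and the lemma gives $\mathbf{X}^{\mathbf{b}} - \mathbf{X}^{\mathbf{c}} \in (I_\sim : \mathbf{X}^{\mathbf{u}}) = I_\sim$, hence $[\mathbf{b}] = [\mathbf{c}]$.

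For the converse, assume every class is nil or cancellable, and set $\delta = \{i \mid [\mathbf{e}_i] \neq \infty\}$. My first move will be to check that the cancellable classes form a submonoid of $\mathbb{N}^n/\!\sim$, via the one-line computation $[\mathbf{a}] + [\mathbf{u}+\mathbf{v}] = [\mathbf{b}] + [\mathbf{u}+\mathbf{v}] \Rightarrow [\mathbf{a}] = [\mathbf{b}]$ obtained by two successive cancellations. Combined with the hypothesis, this pins down the non-nil classes exactly as $\{[\mathbf{u}] \mid \mathbf{u}\in \mathbb{N}^\delta\}$: indeed, $\mathbf{u} \notin \mathbb{N}^\delta$ has some $u_j > 0$ with $[\mathbf{e}_j] = \infty$, so $[\mathbf{u}]$ absorbs and is nil, while $\mathbf{u} \in \mathbb{N}^\delta$ produces a class which is a sum of cancellable generators $[\mathbf{e}_i]$, hence cancellable.

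Consequently the restricted congruence on $\mathbb{N}^\delta$ is cancellative, so by Corollary~\ref{Cor Cong:lattice} its associated binomial ideal in $\Bbbk[X_i \mid i \in \delta]$ is a lattice ideal $I_\mathcal{L}$ for some $\mathcal{L} \subseteq \mathbb{Z}^\delta$. The final step will be to identify $I_\sim = I_\mathcal{L} + \mathfrak{p}_{\delta^c}$ (so that $I_\sim$ is $\delta$-mesoprime with $\rho \equiv 1$): as a unital binomial ideal, $I_\sim$ is generated by the binomials $\mathbf{X}^{\mathbf{u}} - \mathbf{X}^{\mathbf{v}}$ with $\mathbf{u} \sim \mathbf{v}$ and both classes non-nil (which lie in $I_\mathcal{L}$) together with the monomials $\mathbf{X}^{\mathbf{u}}$ with $[\mathbf{u}] = \infty$ (which generate $\mathfrak{p}_{\delta^c}$), while the reverse inclusion follows because $I_\mathcal{L} + \mathfrak{p}_{\delta^c}$ is itself a unital binomial ideal inducing $\sim$ and $I_\sim$ is maximal among such ideals.

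The step I expect to demand the most care is the identification of the non-nil classes with the image of $\mathbb{N}^\delta$: the hypothesis must be used not only to split the classes into nil and cancellable parts, but also to ensure that cancellability is closed under addition so that no class outside $\mathbb{N}^\delta$ can slip in as cancellable and, conversely, no $\mathbf{u}\in \mathbb{N}^\delta$ can produce a nil class. The degenerate cases $\delta = \{1,\dots,n\}$ (yielding a pure lattice $I_\sim = I_\mathcal{L}$) and $\delta = \varnothing$ (yielding $I_\sim = \mathfrak{m}$) should be verified to fit the mesoprime template under the conventions introduced before the definition.
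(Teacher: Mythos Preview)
Your argument is correct and follows essentially the same path as the paper's proof. The forward direction is identical (partition by $\mathbb{N}^\delta$ and invoke Lemma~\ref{Lema varZD}); in the converse you reach the lattice ideal via Corollary~\ref{Cor Cong:lattice} applied to the cancellative restriction on $\mathbb{N}^\delta$, whereas the paper writes $I_\sim = J\,\Bbbk[\mathbf{X}] + \mathfrak{p}_{\delta^c}$ first and then invokes Proposition~\ref{Prop Cor2.5ES} to recognize $J$ as a lattice ideal---a cosmetic difference, since both routes express the same fact that the non-nil part is governed by a sublattice of $\mathbb{Z}^\delta$.
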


\begin{proof}
If $\sim$ is prime congruence on $\mathbb{N}^n$, then there exist $\delta \subseteq \{1, \ldots, n\}$ and a subgroup $\mathcal{L} \subseteq \mathbb{Z}^\delta$ such that $I_\sim =  I_\mathcal{L} + \mathfrak{p}_{\delta^c}.$ Let $[\mathbf{u}]$ be non-nil and let $ [\mathbf{v}]$ and $[\mathbf{w}] \in \mathbb{N}^n/\!\sim$ be such that $[\mathbf{v}] + [\mathbf{u}] = [\mathbf{v} + \mathbf{u}] = [\mathbf{w} + \mathbf{u}] = [\mathbf{w}] + [\mathbf{u}]$. In particular, $\mathbf{X}^\mathbf{u} (\mathbf{X}^\mathbf{v} - \mathbf{X}^\mathbf{w}) = \mathbf{X}^{\mathbf{v} + \mathbf{u}} - \mathbf{X}^{\mathbf{w} + \mathbf{v}}   \in I_\sim$. Since $[\mathbf{u}]$ is non-nil, $\mathbf{X}^\mathbf{u}$ does not belong to $I_\sim$. Therefore, $\mathbf{u} \in \{X_i\}_{i \in \delta}$ and, by Lemma \ref{Lema varZD}, $\mathbf{X}^\mathbf{v} - \mathbf{X}^\mathbf{w}  \in I_\sim$, that is, $[\mathbf{v}] = [\mathbf{w}]$. So $[\mathbf{u}]$ is cancellable.

Conversely, suppose that every element of $\mathbb{N}^n/\!\sim$ is either nil or cancellable, set $\delta = \{i \in \{1, \ldots, n\} : [\mathbf{e}_i]$ is cancellable$\}$. Clearly, $j \not\in \delta$ if and only if $X_j \in I_\sim$. So, there exist a binomial ideal $J$ in $\Bbbk[\{X_i\}_{i \in \delta}]$ such that $I_\sim = J\,  \Bbbk[\mathbf{X}] + \mathfrak{p}_{\delta^c}$ (if $\delta = \varnothing$, take $J = \langle 0 \rangle$). Moreover, since $[\mathbf e_i]$  is cancellable for every $i \in \delta,$ if $\mathbf{X}^{\mathbf{e}_i} f = X_i f \in J$, for some $i \in \delta$, then $f \in J$. Thus, by Proposition \ref{Prop Cor2.5ES}, $J$ is lattice ideal of $\Bbbk[\{X_i\}_{i \in \delta}]$ and, consequently, $J\, \Bbbk[\mathbf{X}]$ is a lattice ideal. Therefore, $I_\sim = J\,  \Bbbk[\mathbf{X}] + \mathfrak{p}_{\delta^c}$ is a $\delta-$mesoprime ideal and we are done.
\end{proof}

\section{Cellular binomial ideals}

In this section we study the so-called cellular binomial ideals defined by D.~Eisenbud and B.~Sturmfels in \cite{ES96}. Cellular binomial ideals play a central role in the theory of primary decomposition of binomials ideals (see \cite{ES96} and also \cite{EM, OjPie, Oj2}). As in the previous section, we will determine the congruences on $\mathbb{N}^n$ corresponding to those ideals. We will also outline an algorithm to compute a decomposition of a binomial ideal into cellular binomial ideals which will produce (primary) decompositions of the corresponding congruences.

Let us start by defining the notion of cellular ideal.

\begin{definition}\label{Def Cellular}
A proper ideal $I$ of $\Bbbk[\mathbf{X}]$ is \textbf{cellular} if, for some $\delta \subseteq \{1, \ldots, n\},$ we have that
\begin{enumerate}
\item $I : (\prod_{i \in \delta} X_i)^\infty = I$; equivalently $I : X_i = I,$ for every $i \in \delta$,
\item there exists $d_i \in \mathbb{N}$ such that $X_i^{d_i} \in I,$ for every $i \not\in \delta$.
\end{enumerate}
In this case, we say that $I$ is cellular with respect to $\delta$ or, simply, $\delta-$cellular. By convention, the $\varnothing-$cellular ideals are the binomial ideals whose radical is $\langle X_1, \ldots, X_n \rangle$.
\end{definition}

Observe that an ideal $I$ of $\Bbbk[\mathbf{X}]$ is cellular if, and only if, every variable of $\Bbbk[\mathbf{X}]$ is either a nonzerodivisor or nilpotent modulo $I.$ In particular, prime, lattice, mesoprime and primary ideals are cellular.

The following proposition establishes the relationship between cellular binomial and mesoprime ideals.

\begin{proposition}\label{Prop Celular-CP}
Let $\delta \subseteq \{1, \ldots, n\}$. If $I$ is a $\delta-$cellular binomial ideal in $\Bbbk[\mathbf{X}],$ there exists a group morphism $\rho : \mathcal{L} \subseteq \mathbb{Z}^\delta \to \Bbbk^*$ such that
\begin{enumerate}
\item $(I \cap \Bbbk[\{X_i\}_{i \in \delta}])\, \Bbbk[\mathbf X] = I_\mathcal{L}(\rho)$.
\item $I + \mathfrak{p}_{\delta^c} = I_\mathcal{L}(\rho) + \mathfrak{p}_{\delta^ c}$.
\item $\sqrt{I + \mathfrak{p}_{\delta^c}} = \sqrt{I_\mathcal{L}(\rho)} + \mathfrak{p}_{\delta^c}$.
\item $\sqrt{I} = \sqrt{I_\mathcal{L}(\rho)} + \mathfrak{p}_{\delta^c}$.
\end{enumerate}
In particular, the radical of a cellular binomial ideal is a mesoprime ideal, and the minimal associated primes of $I$ are binomial.
\end{proposition}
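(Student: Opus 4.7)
The plan is to prove items (1)--(4) in the order listed, then read off the final two assertions from item (4) together with Theorem~\ref{Corolario 2.2ES}. Throughout, I set $J := I \cap \Bbbk[\{X_i\}_{i \in \delta}]$ and regard it both as an ideal in $\Bbbk[\{X_i\}_{i \in \delta}]$ and, via extension of scalars, in $\Bbbk[\mathbf{X}]$.

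For item (1), I first observe that $J$ is binomial by Corollary~\ref{coro:elimIsBinomial}, and that $J$ is pure: any monomial $\mathbf{X}^{\mathbf{u}} \in J$ is supported on $\delta$, and since $I : X_i = I$ for all $i \in \delta$ we can strip off the variables one at a time to conclude $1 \in I$, contradicting properness. Cellularity similarly gives $J : X_i = J$ inside $\Bbbk[\{X_i\}_{i \in \delta}]$ for $i \in \delta$, so $J = J : \bigl(\prod_{i\in\delta} X_i\bigr)^\infty$. Proposition~\ref{Prop Cor2.5ES} then yields a unique group morphism $\rho : \mathcal{L} \subseteq \mathbb{Z}^\delta \to \Bbbk^*$ with $J = I_{\mathcal{L}}(\rho)$ in $\Bbbk[\{X_i\}_{i \in \delta}]$; extending scalars to $\Bbbk[\mathbf{X}]$ is exactly~(1).

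For item (2), the inclusion $\supseteq$ is immediate from (1). For $\subseteq$, I use that $I$ is $\Bbbk$-spanned by its monomials and its binomials $\mathbf{X}^{\mathbf{u}} - \lambda \mathbf{X}^{\mathbf{v}}$ with $\lambda \in \Bbbk^*$, and classify such generators by support: if both $\mathbf{u},\mathbf{v} \in \mathbb{N}^\delta$ they lie in $J \subseteq I_{\mathcal{L}}(\rho)\Bbbk[\mathbf{X}]$; if both have a coordinate outside $\delta$ they lie in $\mathfrak{p}_{\delta^c}$. The subtle case is a mixed binomial with, say, $\mathbf{u} \notin \mathbb{N}^\delta$ and $\mathbf{v} \in \mathbb{N}^\delta$. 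Cellularity condition (ii) forces $\mathbf{X}^{\mathbf{u}}$ to be nilpotent modulo $I$, so $(\mathbf{X}^{\mathbf{u}})^m \in I$ for some $m$; then $\lambda^m \mathbf{X}^{m\mathbf{v}} \equiv (\mathbf{X}^{\mathbf{u}})^m \equiv 0 \pmod{I}$ gives $\mathbf{X}^{m\mathbf{v}} \in I$, and the purity argument from Stage~1 applied to $m\mathbf{v} \in \mathbb{N}^\delta$ forces $\mathbf{v} = 0$; but then $\mathbf{X}^{\mathbf{u}} - \lambda \in I$ combined with the nilpotency of $\mathbf{X}^{\mathbf{u}}$ yields $1 \in I$, a contradiction. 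An analogous argument shows every monomial in $I$ already lies in $\mathfrak{p}_{\delta^c}$. This is the decisive structural step, and I expect it to be the main obstacle; the rest is essentially bookkeeping.

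Item (3) follows by passing to the quotient $\Bbbk[\mathbf{X}]/\mathfrak{p}_{\delta^c} \cong \Bbbk[\{X_i\}_{i \in \delta}]$ and using that radicals commute with this isomorphism, applied to the equality in~(2). Item (4) follows from (3) once one notes $\mathfrak{p}_{\delta^c} \subseteq \sqrt{I}$ (because $X_j^{d_j} \in I$ for each $j \notin \delta$), which gives $\sqrt{I} = \sqrt{I + \mathfrak{p}_{\delta^c}}$. Finally, Theorem~\ref{Corolario 2.2ES} tells us that $\sqrt{I_{\mathcal{L}}(\rho)} = I_{\mathrm{Sat}_p(\mathcal{L})}(\rho')$ is a lattice ideal, whence (4) exhibits $\sqrt{I}$ as a mesoprime; the same theorem identifies the minimal primes of $I_{\mathcal{L}}(\rho)$ as the binomial prime ideals $I_{\mathrm{Sat}(\mathcal{L})}(\rho'_j)$, and adding $\mathfrak{p}_{\delta^c}$ produces binomial primes in $\Bbbk[\mathbf{X}]$ (their quotients $\Bbbk[\{X_i\}_{i \in \delta}]/I_{\mathrm{Sat}(\mathcal{L})}(\rho'_j)$ are domains) of a common codimension, which are therefore exactly the minimal primes of $\sqrt{I}$, hence of $I$.
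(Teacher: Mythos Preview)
Your proof is correct and follows essentially the same route as the paper: invoke Corollary~\ref{coro:elimIsBinomial} and Proposition~\ref{Prop Cor2.5ES} for~(1), derive~(2)--(4) from it, and read off the final claims via Theorem~\ref{Corolario 2.2ES}. The one notable difference is that you spell out the case analysis for~(2) (ruling out ``mixed'' binomials with one term supported on $\delta$ and the other not), whereas the paper declares~(2) an immediate consequence of~(1); your argument here is a genuine and correct justification of a step the paper leaves implicit.
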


\begin{proof}
If $\delta = \varnothing$, then $I \cap \Bbbk[\{X_i\}_{i \in \delta}]) = 0$ and it suffices to take $\rho : \{\mathbf{0}\} \to \Bbbk^*; \mathbf{0} \mapsto 1$. So, assume 
without loss of generality that $\delta \neq \varnothing$.
 
In order to prove part (a), we first note that $J := I \cap \Bbbk[\{X_i\}_{i \in \delta}]$ is binomial by Corollary \ref{coro:elimIsBinomial}, and that $J : (\prod_{i \in \delta} X_i)^\infty = J$ by the definition of cellular ideal. Thus, $J\, \Bbbk[\mathbf X] : (\prod_{i = 1}^n X_i)^\infty = J\, \Bbbk[\mathbf X]$ and, by Proposition \ref{Prop Cor2.5ES}, there is a unique group morphism $\rho : \mathcal{L} \subseteq \mathbb{Z}^\delta \to \Bbbk^*$ such that $J\, \Bbbk[\mathbf X] = I_\mathcal{L}(\rho)$. 

Part (b) is an immediate consequence of (a). 

By part (b) and according to the properties of the radical, we have that  $$ \sqrt{I + \mathfrak{p}_{\delta^c}} = \sqrt{I_\mathcal{L}(\rho) + \mathfrak{p}_{\delta^c}} = \sqrt{\sqrt{I_\mathcal{L}(\rho)} + \mathfrak{p}_{\delta^c}}  \supseteq \sqrt{I_\mathcal{L}(\rho)} + \mathfrak{p}_{\delta^c} \supseteq \sqrt{I_\mathcal{L}(\rho)} + \mathfrak{p}_{\delta^c}. $$ On other hand, given $f \in \sqrt{I_\mathcal{L}(\rho)} + \mathfrak{p}_{\delta^c},$ we can write $f =h + \sum_{i \not\in \delta} g_iX_i$ where $h^e \in I_\mathcal{L}(\rho)$ for some $e > 0$. Now, since $I_\mathcal{L}(\rho) \subseteq I_\mathcal{L}(\rho)  +\mathfrak{p}_{\delta^c} = I + \mathfrak{p}_{\delta^c}$, we have that $f^e = \big(h + \sum_{i \not\in \delta} g_iX_i\big)^e \in  I + \mathfrak{p}_{\delta^c}$, that is to say, $f \in \sqrt{I + \mathfrak{p}_{\delta^c}}$. Thus, we obtain that $\sqrt{I + \mathfrak{p}_{\delta^c}} = \sqrt{I_\mathcal{L}(\rho)} + \mathfrak{p}_{\delta^c}$, as claimed in (c). 

For part (d), we observe that $$\sqrt{I_\mathcal{L}(\rho) + \mathfrak{p}_{\delta^c}} = \sqrt{I_\mathcal{L}(\rho) + \langle X_i^{d_i}\ \mid\ i \not\in \delta \rangle},$$ and that $$I_\mathcal{L}(\rho) +  \langle X_i^{d_i}\ \mid\ i \not\in \delta \rangle = (I \cap \Bbbk[\{X_i\}_{i \in \delta}])\, \Bbbk[\mathbf X] + \langle X_i^{d_i}\ \mid\ i \not\in \delta \rangle \subseteq I \subseteq I + \mathfrak{p}_{\delta^c} ,$$ for every $d_i \geq 1,\ i \not\in \delta$. Therefore, taking radicals, by part (c) we conclude that $\sqrt{I_\mathcal{L}(\rho) + \mathfrak{p}_{\delta^c}} = \sqrt{I}=\sqrt{I + \mathfrak{p}_{\delta^c} }$. 

Now, the last statements are direct consequences of the definition of mesoprimary ideal and Theorem \ref{Corolario 2.2ES}.
\end{proof}

In the following definition we introduce the concept of primary congruence on $\mathbb{N}^n$. We prove that our notion of 
primary congruence is equivalent to the one given in \cite[p. 44]{Gilmer}.

\begin{definition}
A congruence $\sim$ on $\mathbb{N}^n$ is said to be \textbf{primary} if the ideal $I_\sim$ is cellular.
\end{definition}

\begin{definition}
Let $\sim$ be a congruence on $\mathbb{N}^n$. An element $\mathbf{a} \in \mathbb{N}^n/\!\sim$ is said to be \textbf{nilpotent} if $d\,  \mathbf{a}$ is nil, for some $d \in \mathbb{N}$.
\end{definition}

\begin{proposition}
A congruence $\sim$ on $\mathbb{N}^n$ is primary if and only if every element of $\mathbb{N}^n/\!\sim$ is nilpotent or cancellable.
\end{proposition}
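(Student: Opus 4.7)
The plan is to characterize cellularity of $I_\sim$ via the condition that each variable $X_i$ is either a nonzerodivisor or nilpotent modulo $I_\sim$, and then to translate this algebraic statement to the monoid-theoretic condition on the classes $[\mathbf{e}_i]$. The natural choice is $\delta = \{i \mid [\mathbf{e}_i]$ is cancellable$\}$; note that a nonzero cancellable element cannot be nilpotent (sums of cancellable elements are cancellable, while the nil $\infty$ is not), so these two properties are mutually exclusive on generators.

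For the forward implication, suppose $I_\sim$ is $\delta$-cellular. For $i \notin \delta$, $X_i^{d_i} \in I_\sim$ translates directly to $[d_i \mathbf{e}_i] = \infty$, making $[\mathbf{e}_i]$ nilpotent. For $i \in \delta$, the nonzerodivisor property of $X_i$ modulo $I_\sim$ applied to $X_i(\mathbf{X}^{\mathbf{u}} - \mathbf{X}^{\mathbf{v}}) \in I_\sim$ and $X_i \mathbf{X}^{\mathbf{u}} \in I_\sim$ respectively yields cancellability of $[\mathbf{e}_i]$. To upgrade to an arbitrary $[\mathbf{u}] = \sum_j u_j [\mathbf{e}_j]$: if $u_j = 0$ for all $j \notin \delta$, then $[\mathbf{u}]$ is a $\mathbb{N}$-linear combination of cancellable generators and is itself cancellable, since the set of cancellable elements in a commutative monoid is closed under addition. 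Otherwise $u_{j_0} \geq 1$ for some $j_0 \notin \delta$; writing $d_{j_0} \mathbf{u} = d_{j_0} \mathbf{e}_{j_0} + \mathbf{w}$ with $\mathbf{w} \in \mathbb{N}^n$ gives $d_{j_0}[\mathbf{u}] = [d_{j_0}\mathbf{e}_{j_0}] + [\mathbf{w}] = \infty + [\mathbf{w}] = \infty$, so $[\mathbf{u}]$ is nilpotent.

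For the converse, assume every element of $\mathbb{N}^n/\!\sim$ is nilpotent or cancellable, and let $\delta = \{i \mid [\mathbf{e}_i]$ is cancellable$\}$. Condition (2) of Definition \ref{Def Cellular} is immediate: for $i \notin \delta$, the hypothesis yields $[d_i \mathbf{e}_i] = \infty$, hence $X_i^{d_i} \in I_\sim$. For condition (1) with $i \in \delta$, I consider $J := I_\sim : X_i$, which is a binomial ideal by Corollary \ref{coro:colonIdealIsBinomial} and contains $I_\sim$. The key step is to show $J$ induces the same congruence $\sim$: one has $\mathbf{u} \sim_J \mathbf{v}$ exactly when $\mathbf{X}^{\mathbf{u}+\mathbf{e}_i} - \lambda\, \mathbf{X}^{\mathbf{v}+\mathbf{e}_i} \in I_\sim$ for some $\lambda \in \Bbbk^*$. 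Since $I_\sim$ is unital (and its only non-binomial generators are monomials in the nil class), this forces $[\mathbf{u} + \mathbf{e}_i] = [\mathbf{v} + \mathbf{e}_i]$, and the cancellability of $[\mathbf{e}_i]$ gives $[\mathbf{u}] = [\mathbf{v}]$; the reverse implication is trivial. Finally, as $J$ is a proper binomial ideal inducing $\sim$ and containing $I_\sim$, the maximality clause in Definition \ref{def:binomialIdealAssocCong} forces $J = I_\sim$, establishing $I_\sim : X_i = I_\sim$.

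The main obstacle is the converse direction: ensuring that $X_i$ is a \emph{full} nonzerodivisor on $\Bbbk[\mathbf{X}]/I_\sim$, not merely one whose action on binomials is injective. The elegant bypass is to recognize $I_\sim : X_i$ as a binomial ideal that induces $\sim$, thereby reducing the entire question to the maximality of $I_\sim$ among proper binomial ideals inducing $\sim$.
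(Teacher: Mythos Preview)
Your proof is correct. The forward direction is essentially the paper's argument rephrased: the paper takes an arbitrary non-nilpotent $[\mathbf{u}]$, observes that $\mathbf{X}^{\mathbf{u}}$ is then a product of variables $X_i$ with $i\in\delta$, and uses $I_\sim:\mathbf{X}^{\mathbf{u}}=I_\sim$ to conclude cancellability directly; you instead first treat the generators $[\mathbf{e}_i]$ and then pass to arbitrary classes by additivity, which is the same content organized differently.

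The converse is where you genuinely diverge. The paper simply asserts that $j\in\delta$ if and only if $X_j$ is a nonzerodivisor modulo $I_\sim$; the implicit justification is the fine $S$-grading on $\Bbbk[\mathbf{X}]/I_\sim$, under which multiplication by $X_j$ permutes the non-nil graded pieces injectively when $[\mathbf{e}_j]$ is cancellable (note that cancellability also forbids any non-nil $a$ with $a+[\mathbf{e}_j]=\infty$, since $\infty+[\mathbf{e}_j]=\infty$ would then violate it). Your route avoids this graded computation entirely: you show that $J=I_\sim:X_i$ is a proper binomial ideal inducing the same congruence, and then invoke the maximality clause of Definition~\ref{def:binomialIdealAssocCong} to force $J=I_\sim$. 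This is an elegant use of the structural results (Theorem~\ref{KMThm9.12}, Proposition~\ref{Prop 9.5KM}) developed earlier in the paper, and it makes explicit a step that the paper's ``clearly'' leaves to the reader. The trade-off is that your argument relies on the maximality characterization of $I_\sim$, whereas the paper's direct approach would work for any binomial ideal inducing $\sim$ that contains the nil-class monomials.
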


\begin{proof}
If $\sim$ is a primary congruence on $\mathbb{N}^n$, the binomial associated ideal $I_\sim$ is $\delta-$cellular for some $\delta \subseteq \{1, \ldots, n\}$. Let $[\mathbf{u}]$ be a non-nilpotent element of $\mathbb{N}^n/\!\sim$. Given $[\mathbf{v}]$ and $[\mathbf{w}] \in \mathbb{N}^n/\!\sim$ such that $[\mathbf{v}] + [\mathbf{u}] = [\mathbf{v} + \mathbf{u}] = [\mathbf{w} + \mathbf{u}] = [\mathbf{w}] + [\mathbf{u}]$, we have that $\mathbf{X}^\mathbf{u} (\mathbf{X}^\mathbf{v} - \mathbf{X}^\mathbf{w}) \in I_\sim$. Since $[\mathbf{u}]$ is not nilpotent, $(\mathbf{X}^\mathbf{u})^d \not\in I_\sim$, for every $d \in \mathbb{N}$. Therefore, no variable $X_i$ with $i \not\in \delta$ divides $\mathbf X^{\mathbf{u}}$ and, by the definition of cellular ideal, we conclude that $I_\sim : \mathbf{X}^{\mathbf{u}} = I_\sim$; in particular, $\mathbf{X}^\mathbf{v} - \mathbf{X}^\mathbf{w} \in I_\sim$, that is, $[\mathbf{v}] = [\mathbf{w}]$, and hence $[\mathbf{u}]$ is cancellable. 

Conversely, suppose that every element of $\mathbb{N}^n/\!\sim$ is nilpotent or cancellable. Set $\delta = \{i \in \{1, \ldots, n\} : [\mathbf{e}_i]$ is cancellable$\}$. Clearly, $j \in \delta$ if and only if $X_j$ is a nonzerodivisor modulo $I_\sim$ and $j \not\in \delta$ if and only if $X_j^{d_j} \in I_\sim$, for some $d_j \geq 1$. Therefore, $I_\sim$ is a $\delta-$cellular ideal (see the paragraph just after Definition \ref{Def Cellular}).
\end{proof}

As a consequence, if $\sim$ is a primary congruence on $\mathbb{N}^n$, then, by Proposition \ref{Prop Celular-CP}, $J := \sqrt{I_\sim}$ is a mesoprime ideal. Therefore, associated to $\sim$ there is one and only one prime congruence, $\sim_J$, obtained by removing nilpotent elements.

\subsection{Cellular Decomposition of Binomial Ideals}\mbox{}\par

\begin{definition}\label{Def CD}
A cellular decomposition of an ideal $I \subseteq \Bbbk[\mathbf{X}]$ is an expression of $I$ as an intersection of cellular ideals with respect to different $\delta \subseteq \{1, \ldots, n\},$ say
\begin{equation}\label{ecu cell0} I = \bigcap_{\delta \in \Delta} \mathcal{C}_\delta,\end{equation} for some subset $\Delta$ of the power set of $\{1, \ldots, n\}.$ Moreover, the cellular decomposition (\ref{ecu cell0}) is said to be minimal if  $\mathcal{C}_\delta' \not\supseteq \bigcap_{\delta \in \Delta \setminus \{\delta'\}} \mathcal{C}_\delta$ for every $\delta' \in \Delta$; in this case, the cellular component $\mathcal C_\delta$ is said to be a $\delta-$cellular component of $I.$
\end{definition}

\begin{example}
Every minimal primary decomposition of a monomial ideal $I \subseteq \Bbbk[\mathbf{X}]$ into monomial ideals is a minimal cellular decomposition of $I.$ Consequently, there is non-uniqueness for cellular decomposition in general: consider for instance the following cellular (primary) decomposition $$\langle X^2,XY \rangle = \langle X \rangle \cap \langle X^2, XY, Y^n \rangle,$$ where $n$ can take any positive integral value.
\end{example}


Cellular decompositions of an ideal $I$ of $\Bbbk[\mathbf{X}]$ always exist. A simple algorithm for cellular decomposition of binomial ideals can be found in \cite[Algorithm 2]{OjPie}, this algorithm forms part of the \texttt{binomials} package developed by T.~Kahle and it is briefly described below. The interested reader may consult \cite{Kahle} and \cite{OjPie} for further details.

The following result is the key for producing cellular decompositions of binomial ideals into binomial ideals.

\begin{lemma}\label{Lema DesCel}
Let $I$ be a proper binomial ideal in $\Bbbk[\mathbf X].$ If $I$ is not cellular then there exists $i \in \{1, \ldots, n\}$ and a positive integer $d$ such that $I=(I: X_i^d) \cap (I+\langle X_i^d \rangle),$ with $I : X_i^d$ and $I+ \langle X_i^d \rangle$ binomial ideals strictly containing $I.$
\end{lemma}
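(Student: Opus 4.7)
The plan is to exploit the failure of cellularity to find a variable $X_i$ that is simultaneously a zerodivisor and non-nilpotent modulo $I$, and then apply the classical Noether trick based on the stabilization of the chain of colon ideals.

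First I would unpack what it means for $I$ to fail cellularity. Setting $\delta = \{i \mid I : X_i = I\}$ (the indices of the nonzerodivisors modulo $I$), the ideal $I$ is cellular with respect to $\delta$ precisely when every $i \notin \delta$ satisfies $X_i^{d_i} \in I$ for some $d_i$. Hence if $I$ is not cellular, there is an index $i$ such that $X_i$ is a zerodivisor modulo $I$ (so $I : X_i \supsetneq I$) and $X_i$ is not nilpotent modulo $I$ (so $X_i^d \notin I$ for every $d \geq 1$). Fix this $i$.

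Next I would appeal to the ascending chain condition in $\Bbbk[\mathbf{X}]$ applied to the chain
\[
(I : X_i) \subseteq (I : X_i^2) \subseteq (I : X_i^3) \subseteq \cdots
\]
to pick $d$ with $(I : X_i^d) = (I : X_i^{d+k})$ for all $k \geq 0$; in particular $(I : X_i^{2d}) = (I : X_i^d)$. For such a $d$, the decomposition $I = (I : X_i^d) \cap (I + \langle X_i^d\rangle)$ follows by the standard argument: the inclusion $\subseteq$ is immediate, and for $\supseteq$, take $f$ in the right-hand side, write $f = g + X_i^d h$ with $g \in I$, and multiply by $X_i^d$ to get $X_i^{2d} h \in I$; by stabilization, $h \in (I : X_i^d)$, hence $X_i^d h \in I$, giving $f \in I$. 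This step with the stabilization is the central technical point, though it is routine once set up correctly.

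Finally I would handle binomiality and strictness. The ideal $(I : X_i^d)$ is binomial by Corollary~\ref{coro:colonIdealIsBinomial} applied iteratively (or directly, since $X_i^d$ is a monomial), and $I + \langle X_i^d \rangle$ is binomial because the sum of a binomial ideal and a monomial ideal is clearly binomial. For strictness: $(I : X_i^d) \supseteq (I : X_i) \supsetneq I$ because $X_i$ is a zerodivisor modulo $I$, and $I + \langle X_i^d \rangle \supsetneq I$ because $X_i^d \notin I$ by non-nilpotence. The main conceptual obstacle, as noted, is ensuring the correct choice of $d$ so that $(I : X_i^{2d}) = (I : X_i^d)$, which is what makes the intersection identity work.
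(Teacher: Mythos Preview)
Your proof is correct and essentially identical to the paper's own argument: both pick a variable $X_i$ that is a zerodivisor but not nilpotent modulo $I$, stabilize the chain of colon ideals to obtain $d$ with $(I:X_i^{2d})=(I:X_i^d)$, verify the intersection identity via the same computation $f=g+X_i^d h \Rightarrow X_i^{2d}h\in I$, and invoke Corollary~\ref{coro:colonIdealIsBinomial} plus the zerodivisor/non-nilpotent conditions for binomiality and strict containment. Your treatment of the first step (explicitly defining $\delta$ and arguing that failure of cellularity forces such an $i$) is slightly more detailed than the paper's, but the approach is the same.
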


\begin{proof} 
If $I$ is not cellular, there exists at least one variable $X_i$ which is zerodivisor and not nilpotent modulo $I$. Then, by the Noetherian property of $\Bbbk[\mathbf X]$, there is a positive integer $d$ such that $I : \langle X_i^d \rangle = I : \langle X_i^e \rangle$ for every $e \geq d$. We claim that $I$ decomposes as $(I: X_i^d) \cap (I+\langle X_i^d \rangle)$. Indeed, let $f \in (I: X_i^d) \cap (I+\langle X_i^d \rangle)$ and let $f = g + h X_i^d$ for some $g \in I$. Then $X_i^d f = X_i^d g + h X_i^{2d}$ and, thus $ h X_i^{2 d} = X_i f - X_i g \in I$. That is, $h \in I : \langle X_i^{2d} \rangle = I : \langle X_i^{d} \rangle$. Hence, $h X_i^d \in I$ and, consequently, $f \in I$.  

It remains to see that both $I: X_i^d$ and $I+\langle X_i^d \rangle$ are binomial ideals which strictly contain $I$. On the one hand, the ideal $I+\langle X_i^d \rangle$ is binomial and $I$ is strictly contained in it, as $X_i$ is not nilpotent modulo
$I.$ On the other hand, $I : X_i^d$ is binomial by Corollary \ref{coro:colonIdealIsBinomial}, and $I$ is strictly contained in $I: X_i^d$ because $X_i$ is a zerodivisor modulo $I.$
\end{proof}

Now, by Lemma \ref{Lema DesCel}, if $I$ is not a cellular ideal then we can find two new proper ideals
strictly containing $I.$ If these ideals are cellular then we are
done. Otherwise, we can repeat the same argument with these new
ideals, getting strictly increasing chains of binomial ideals.
Since $\Bbbk[\mathbf X]$ is a Noetherian ring, each one of these chains has to be 
stationary. So, in the end, we obtain a (redundant) cellular
decomposition of $I.$ Observe that this process does not depend on the base field.

\begin{example}\label{Ejem intro}
Consider the binomial ideal $I = \langle X^4 Y^2-Z^6,X^3 Y^2-Z^5,X^2-Y Z \rangle$ of
$\mathbb{Q}[X,Y,Z].$ By using \cite[Algorithm 2]{OjPie} we obtain  the
following cellular decomposition, $I=I_1 \cap I_2 \cap I_3,$ where
$$
\begin{array}{rcl}
I_1 & = & \langle Y-Z,X-Z \rangle \\ I_2 & = & \langle Z^2,X Z,X^2-Y Z \rangle \\ I_3 & = &
\langle X^2-Y Z, X Y^3 Z - Z^5, X Z^5 - Z^6, Z^7, Y^7 \rangle.
\end{array}$$
\begin{verbatim}
      loadPackage "Binomials";      
      R = QQ[X,Y,Z];
      I = ideal(X^4*Y^2-Z^6,X^3*Y^2-Z^5,X^2-Y*Z);
      binomialCellularDecomposition I
\end{verbatim}
\end{example}

As a final conclusion we may notice the following:

\begin{corollary}
Let $\sim$ be a congruence on $\mathbb{N}^n$. A primary decomposition of $\sim$ can be obtained by computing a cellular decomposition of $I_\sim$.
\end{corollary}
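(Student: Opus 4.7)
The plan is to transfer the cellular decomposition of $I_\sim$ to the level of congruences via the correspondence developed throughout the paper, using Proposition \ref{Prop:refinement-intersection} as the bridge.

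First, I would apply Lemma \ref{Lema DesCel} iteratively, starting from $I_\sim$ and splitting any non-cellular ideal appearing in the process. The Noetherian property of $\Bbbk[\mathbf X]$ ensures that this process terminates and yields a cellular decomposition
$$
I_\sim = \bigcap_{\delta \in \Delta} \mathcal{C}_\delta,
$$
in which each $\mathcal{C}_\delta$ is a $\delta$-cellular binomial ideal. For each $\delta \in \Delta$, let $\sim_\delta$ be the congruence induced by $\mathcal{C}_\delta$ on $\mathbb{N}^n$.

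Next, I would show that $\sim_\delta$ is primary, that is, that its associated binomial ideal $I_{\sim_\delta}$ is cellular. By maximality, $\mathcal{C}_\delta \subseteq I_{\sim_\delta}$. If the inclusion is strict, Theorem \ref{KMThm9.12} forces $\mathcal{C}_\delta$ to be pure; but $\mathcal{C}_\delta$ contains $X_i^{d_i}$ for every $i\notin\delta$, so purity can hold only when $\delta=\{1,\dots,n\}$. In that case $\mathcal{C}_\delta:(\prod_i X_i)^\infty=\mathcal{C}_\delta$ together with Proposition \ref{Prop Cor2.5ES} turns $\mathcal{C}_\delta$ into a lattice ideal, whose congruence has no nil element; by Proposition \ref{Prop 9.5KM} this means $I_{\sim_\delta}$ is itself pure, and then Theorem \ref{KMThm9.12} applied to the chain $\mathcal{C}_\delta \subseteq I_{\sim_\delta}$ gives equality. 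Either way $\mathcal{C}_\delta = I_{\sim_\delta}$, so $I_{\sim_\delta}$ is cellular and $\sim_\delta$ is primary.

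Finally, I would combine the above to conclude
$$
I_\sim = \bigcap_{\delta\in\Delta} \mathcal{C}_\delta = \bigcap_{\delta\in\Delta} I_{\sim_\delta},
$$
and invoke Proposition \ref{Prop:refinement-intersection} (applied inductively to the finite intersection) to obtain $\sim = \bigcap_{\delta\in\Delta} \sim_\delta$, which is the desired primary decomposition of $\sim$.

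The main obstacle I anticipate is the subtle discrepancy between a cellular binomial ideal and the associated binomial ideal of the congruence it induces: \emph{a priori} the two may differ, and a congruence is defined to be primary only through its associated binomial ideal. The technical heart of the argument is therefore the case analysis in the second step, in which the dichotomy offered by Theorem \ref{KMThm9.12} has to be ruled out in exactly the right way (via Propositions \ref{Prop Cor2.5ES} and \ref{Prop 9.5KM}) to ensure that $\mathcal{C}_\delta$ and $I_{\sim_\delta}$ coincide, so that cellularity really does descend to the intrinsic ideal attached to $\sim_\delta$.
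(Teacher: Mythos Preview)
Your approach is the paper's approach: reduce to Proposition~\ref{Prop:refinement-intersection} via a cellular decomposition of $I_\sim$. The paper's proof is a one-line appeal to that proposition and to the definition of primary congruence; you have correctly identified and tried to fill in the point the paper glosses over, namely that the cellular components $\mathcal C_\delta$ must coincide with the associated ideals $I_{\sim_\delta}$ for the word ``primary'' to apply.

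There is, however, a genuine gap in your resolution. The step ``by maximality, $\mathcal C_\delta\subseteq I_{\sim_\delta}$'' is not valid as stated: $I_{\sim_\delta}$ is only \emph{a} maximal binomial ideal inducing $\sim_\delta$, not the maximum, and incomparable maximal ones exist (e.g.\ $\langle X-Y,\,Y^2\rangle$ and $\langle X-2Y,\,Y^2\rangle$ induce the same congruence on $\mathbb N^2$ yet neither contains the other). So maximality alone does not give the inclusion you need, and the subsequent dichotomy from Theorem~\ref{KMThm9.12} cannot be invoked until comparability is established. The repair is exactly what Remark~\ref{rmk:differentIdealSameCongruence} hints at: show first that every $\mathcal C_\delta$ produced by the algorithm of Lemma~\ref{Lema DesCel}, starting from the \emph{unital} ideal $I_\sim$, is itself unital (both $I+\langle X_i^d\rangle$ and $I:X_i^d$ preserve unitality, the latter because a unital Gr\"obner basis sends monomials to monomials under normal form). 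For a unital ideal $J$ inducing $\sim_\delta$ one does get $J\subseteq I_{\sim_\delta}$, and from that point your case analysis goes through verbatim.
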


\begin{proof}
It is a direct consequence of Proposition \ref{Prop:refinement-intersection} by the definition of primary congruence.
\end{proof}

\section{Mesoprimary ideals}

The main objective of this section is to analyze the mesoprimary ideals and their corresponding congruence. Mesoprimary ideals were introduced by Thomas Kahle and Ezra Miller in \cite{KM} as an intermediate construction between cellular and primary binomial ideals. Kahle and Miller proved combinatorially that every cellular binomial can be decomposed into finitely many mesoprimary ideals over an arbitrary field. However, not every decomposition of a binomial ideal as an intersection of mesoprimary ideals is a mesoprimary decomposition in the sense of Kahle and Miller. These mesoprimary decompositions feature refined combinatorial requirements, and currently there is no algorithm available to compute them. On the other hand, decompositions of binomial ideals into mesoprimary ideals can be produced algorithmically.. Despite of this, mesoprimary decompositions have been successfully used to solve open problems	(see \cite{KMO} and \cite{MO}). 

The following preparatory result will be helpful in understanding what mesoprimary ideals are.

\begin{proposition}\label{Prop Meso1}
Let $I$ be a $\delta-$cellular binomial ideal in $\Bbbk[\mathbf{X}]$. If $\mathbf{X}^\mathbf{u} \in \Bbbk[\{X_i\}_{i\not\in \delta}] \setminus I$, then $I:\mathbf{X}^\mathbf{u}$ is a $\delta-$cellular binomial ideal.
\end{proposition}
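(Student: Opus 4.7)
The plan is to verify the four defining conditions for $I:\mathbf{X}^\mathbf{u}$ to be a $\delta$-cellular binomial ideal in the sense of Definition~\ref{Def Cellular}: namely, that it is (i) a binomial ideal, (ii) proper, (iii) satisfies $(I:\mathbf{X}^\mathbf{u}):X_i = I:\mathbf{X}^\mathbf{u}$ for every $i\in\delta$, and (iv) contains a power of $X_i$ for every $i\not\in\delta$.

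Conditions (i) and (ii) are almost immediate. Binomiality follows directly from Corollary~\ref{coro:colonIdealIsBinomial}, since $\mathbf{X}^\mathbf{u}$ is a monomial. Properness follows from the hypothesis $\mathbf{X}^\mathbf{u}\notin I$: if we had $1 \in I:\mathbf{X}^\mathbf{u}$, then $\mathbf{X}^\mathbf{u} \in I$, a contradiction.

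For (iii), I would use the standard identity $(I:\mathbf{X}^\mathbf{u}):X_i = I:(X_i\mathbf{X}^\mathbf{u})$ for $i\in\delta$. The inclusion $I:\mathbf{X}^\mathbf{u} \subseteq I:(X_i\mathbf{X}^\mathbf{u})$ is automatic. For the reverse inclusion, suppose $f\cdot X_i\mathbf{X}^\mathbf{u}\in I$; since $I$ is $\delta$-cellular and $i\in\delta$, the variable $X_i$ is a nonzerodivisor modulo $I$ (i.e.\ $I:X_i=I$), so from $X_i(f\mathbf{X}^\mathbf{u})\in I$ we deduce $f\mathbf{X}^\mathbf{u}\in I$, whence $f\in I:\mathbf{X}^\mathbf{u}$.

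For (iv), fix $i\not\in\delta$. By $\delta$-cellularity of $I$, there exists $d_i\in\mathbb{N}$ such that $X_i^{d_i}\in I$, and therefore $X_i^{d_i}\mathbf{X}^\mathbf{u}\in I$, i.e.\ $X_i^{d_i}\in I:\mathbf{X}^\mathbf{u}$. This concludes the verification.

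No step is really an obstacle: the argument is a direct unwinding of the definition of $\delta$-cellularity combined with Corollary~\ref{coro:colonIdealIsBinomial}. The only place where care is needed is invoking that $X_i$ is a nonzerodivisor modulo $I$ for $i\in\delta$ in part (iii); incidentally, the restriction $\mathbf{X}^\mathbf{u}\in \Bbbk[\{X_i\}_{i\not\in\delta}]$ plays no role in the proof beyond making the statement a clean normal form (factors of $X_i$ with $i\in\delta$ could be cancelled without changing the colon, since such $X_i$ are nonzerodivisors modulo $I$), and the role of $\mathbf{X}^\mathbf{u}\notin I$ is only to ensure properness.
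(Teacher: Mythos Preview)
Your proof is correct and follows essentially the same approach as the paper: properness from $\mathbf{X}^\mathbf{u}\notin I$, binomiality from Corollary~\ref{coro:colonIdealIsBinomial}, condition (a) of Definition~\ref{Def Cellular} from commuting the colon operations, and condition (b) from $I\subseteq I:\mathbf{X}^\mathbf{u}$. The only cosmetic difference is that the paper handles (iii) in one line via the saturation identity $(I:\mathbf{X}^\mathbf{u}):(\prod_{i\in\delta}X_i)^\infty = (I:(\prod_{i\in\delta}X_i)^\infty):\mathbf{X}^\mathbf{u} = I:\mathbf{X}^\mathbf{u}$, whereas you argue one variable at a time; your closing remark that the hypothesis $\mathbf{X}^\mathbf{u}\in\Bbbk[\{X_i\}_{i\not\in\delta}]$ is not actually needed is also correct.
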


\begin{proof}
First of all, we note that  $I :  \mathbf{X}^\mathbf{u} \neq \langle 1 \rangle$ because $\mathbf{X}^\mathbf{u} \not\in I$. Moreover, we have that $I :  \mathbf{X}^\mathbf{u}$ is binomial by Corollary \ref{coro:colonIdealIsBinomial}. Now, since $I : (\prod_{i \in \delta} X_i)^\infty = I$, then  
$$( I:\mathbf{X}^\mathbf{u}) : (\prod_{i \in \delta} X_i)^\infty =  
( I: (\prod_{i \in \delta} X_i)^\infty) : \mathbf{X}^\mathbf{u} = 
I:\mathbf{X}^\mathbf{u}.$$ And, clearly, for every $i \not\in \delta,\ X_i^{d_i} \in I:\mathbf{X}^\mathbf{u}$ for some $d_i \geq 1$ because $I \subseteq I :  \mathbf{X}^\mathbf{u}$. Putting all this together, we conclude that $I:\mathbf{X}^\mathbf{u}$ is a $\delta-$cellular binomial ideal. 
\end{proof}

If $I$ is a $\delta-$cellular binomial ideal, then the ideal $(I:\mathbf{X}^\mathbf{u}) + \mathfrak{p}_{\delta^c}$ is $\delta-$mesoprime by Propositions \ref{Prop Meso1} and \ref{Prop Celular-CP}(b). Moreover, there exists $d_i \geq 1$ such that $X_i^{d_i} \in I$ for each $i \not\in \delta$. Thus there are finitely many mesoprime ideals of the form $(I:\mathbf{X}^\mathbf{u}) + \mathfrak{p}_{\delta^c}$. These are the so-called mesoprimes associated to $I$:

\begin{definition}
Let $I$ be a $\delta-$cellular binomial ideal in $\Bbbk[\mathbf{X}]$. We will say that $I_\mathcal{L}(\rho) + \mathfrak{p}_{\delta^c}$ is a mesoprime ideal associated to $I$ if there exist a monomial $\mathbf{X}^\mathbf{u} \in \Bbbk[\{X_i\}_{i \not\in \delta}]$ such that $$\big((I:\mathbf{X}^\mathbf{u}) \cap \Bbbk[\{X_i\}_{i \in \delta}]\big)\, \Bbbk[\mathbf X] = I_\mathcal{L}(\rho).$$ 
\end{definition}

Now we may introduce the notion of mesoprimary ideal.

\begin{definition}
A binomial ideal is said to be \textbf{mesoprimary} if it is cellular and it has only one associated mesoprime ideal. A congruence $\sim$ on $\mathbb{N}^n$ is mesoprimary if $I_\sim$ is a mesoprimary ideal of $\Bbbk[\mathbf{X}]$
\end{definition}

The following lemma clarifies the notion of mesoprimary ideal.

\begin{lemma}\label{Lema mesoprimary}
A $\delta-$cellular binomial ideal $I$ in $\Bbbk[\mathbf{X}]$ is mesoprimary if and only if $(I:\mathbf{X}^\mathbf{u}) \cap \Bbbk[\{X_i\}_{i \not\in \delta}] = I \cap \Bbbk[\{X_i\}_{i \not\in \delta}] $, for all $\mathbf{X}^\mathbf{u} \in \Bbbk[\{X_i\}_{i\not\in \delta}] \setminus I$.
\end{lemma}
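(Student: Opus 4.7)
The plan is to establish both directions of the equivalence by analyzing the colon ideals $(I:\mathbf{X}^\mathbf{u})$ as $\mathbf{X}^\mathbf{u}$ ranges over monomials in $\Bbbk[\{X_i\}_{i\notin\delta}]\setminus I$. The containment $I \cap \Bbbk[\{X_i\}_{i\notin\delta}] \subseteq (I:\mathbf{X}^\mathbf{u}) \cap \Bbbk[\{X_i\}_{i\notin\delta}]$ is immediate from $I \subseteq (I:\mathbf{X}^\mathbf{u})$, so both implications reduce to the reverse containment, and the content of the lemma is comparing how the colon extends $I$ inside the nilpotent-variable subring.

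For the forward direction, assume $I$ is mesoprimary with unique associated mesoprime $I_\mathcal{L}(\rho)+\mathfrak{p}_{\delta^c}$. Fix $\mathbf{X}^\mathbf{u} \in \Bbbk[\{X_i\}_{i\notin\delta}]\setminus I$ and take $f \in (I:\mathbf{X}^\mathbf{u}) \cap \Bbbk[\{X_i\}_{i\notin\delta}]$. By Proposition~\ref{Prop Meso1}, $(I:\mathbf{X}^\mathbf{u})$ is again $\delta$-cellular, and by Corollaries~\ref{coro:colonIdealIsBinomial} and~\ref{coro:elimIsBinomial} the intersection $(I:\mathbf{X}^\mathbf{u}) \cap \Bbbk[\{X_i\}_{i\notin\delta}]$ is a binomial ideal, which reduces the task to showing that a single binomial or monomial of the form $\mathbf{X}^\mathbf{a} - \lambda \mathbf{X}^\mathbf{b}$ in the nilpotent variables lies in $I$. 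The mesoprimary hypothesis yields $(I:\mathbf{X}^\mathbf{u}) \cap \Bbbk[\{X_i\}_{i\in\delta}] = I \cap \Bbbk[\{X_i\}_{i\in\delta}]$, and I would exploit this cancellative-side agreement, together with the $\delta$-cellular structure on both $I$ and $(I:\mathbf{X}^\mathbf{u})$, to force the purely nilpotent $f$ to already belong to $I$.

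For the reverse direction, assume the stated colon identity holds for every $\mathbf{X}^\mathbf{u} \in \Bbbk[\{X_i\}_{i\notin\delta}]\setminus I$. To prove mesoprimarity I would take two such monomials $\mathbf{X}^\mathbf{u}$ and $\mathbf{X}^\mathbf{v}$, and show their associated mesoprimes agree, i.e.\ $(I:\mathbf{X}^\mathbf{u}) \cap \Bbbk[\{X_i\}_{i\in\delta}] = (I:\mathbf{X}^\mathbf{v}) \cap \Bbbk[\{X_i\}_{i\in\delta}]$. Proposition~\ref{Prop Cor2.5ES} applied to the $\delta$-cellular ideal $(I:\mathbf{X}^\mathbf{u})$ extracts its lattice/character data, and I would use the stated hypothesis, together with the fact that both $(I:\mathbf{X}^\mathbf{u})$ and $(I:\mathbf{X}^\mathbf{v})$ contain $I$, to propagate the nilpotent-side rigidity across to the cancellative side.

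The main obstacle is precisely this translation between two seemingly orthogonal pieces of structure: the stated identity constrains the \emph{nilpotent} part of colon ideals, while the mesoprimary property is governed by lattice data in the \emph{cancellative} variables. The link is provided by binomial generators $\mathbf{X}^\mathbf{a}-\lambda\mathbf{X}^\mathbf{b} \in I$ whose two monomial terms mix variables from $\delta$ and $\delta^c$; tracking how these mixed binomials reduce colons on each side is the delicate step, and a careful analysis there is what will make the equivalence tight rather than merely one-sided.
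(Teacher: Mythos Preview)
Your proposal never actually proves either direction: the ``delicate step'' of tracking mixed binomials is announced but not carried out, so what you have is an outline with the crucial argument missing.

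More to the point, the obstacle you have identified --- translating between the nilpotent side $\Bbbk[\{X_i\}_{i\notin\delta}]$ and the cancellative side $\Bbbk[\{X_i\}_{i\in\delta}]$ --- is a phantom created by a typographical error in the statement. The intersections in the lemma should be with $\Bbbk[\{X_i\}_{i\in\delta}]$, not $\Bbbk[\{X_i\}_{i\notin\delta}]$. This is the only reading consistent with the definition of associated mesoprime immediately preceding the lemma (which extracts $(I:\mathbf{X}^\mathbf{u})\cap\Bbbk[\{X_i\}_{i\in\delta}]$), and with how the lemma is invoked in the proof of the next Proposition, where the displayed equation reads $(I:\mathbf{X}^\mathbf{u})\cap\Bbbk[\{X_i\}_{i\in\delta}] = I\cap\Bbbk[\{X_i\}_{i\in\delta}]$. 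The paper's one-line proof carries the same typo, but its appeal to Proposition~\ref{Prop Celular-CP} only makes sense for the $\delta$-side.

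With the corrected subscript the argument is immediate and there is nothing to bridge. By Proposition~\ref{Prop Celular-CP}(a)(b) applied to a $\delta$-cellular binomial ideal $J$, the mesoprime $J+\mathfrak{p}_{\delta^c}$ is determined by, and determines, $J\cap\Bbbk[\{X_i\}_{i\in\delta}]$. Hence the associated mesoprime witnessed by $\mathbf{X}^\mathbf{u}$ equals the one witnessed by $\mathbf{X}^\mathbf{0}=1$ (namely $I+\mathfrak{p}_{\delta^c}$) if and only if $(I:\mathbf{X}^\mathbf{u})\cap\Bbbk[\{X_i\}_{i\in\delta}] = I\cap\Bbbk[\{X_i\}_{i\in\delta}]$. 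Uniqueness of the associated mesoprime is therefore equivalent to this equality holding for every admissible $\mathbf{X}^\mathbf{u}$, which is exactly the (corrected) condition. The whole lemma is a direct unwinding of the definition; no analysis of mixed binomials is needed.
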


\begin{proof}
It suffices to note that $I$ has two different associated mesoprimes if and only if there exists $\mathbf{X}^\mathbf{u} \in \Bbbk[\{X_i\}_{i\not\in \delta}]$ such that $(I:\mathbf{X}^\mathbf{u}) \cap \Bbbk[\{X_i\}_{i \not\in \delta}] \neq I \cap \Bbbk[\{X_i\}_{i \not\in \delta}] $ because, in this case, by Proposition \ref{Prop Celular-CP}, $(I:\mathbf{X}^\mathbf{u}) + \mathfrak{p}_{\delta^c}$ and $ I  + \mathfrak{p}_{\delta^c}$ are two different associated mesoprimes to $I$.
\end{proof}

\begin{definition}
Let $\sim$ be a congruence on $\mathbb{N}^n$. An element $\mathbf{a} \in \mathbb{N}^n/\!\sim$ is said to be \textbf{partly cancellable} if $\mathbf{a}  + \mathbf{b} = \mathbf{a}  + \mathbf{c} \neq \infty \Rightarrow \mathbf{b} = \mathbf{c},$ for all cancellable $\mathbf{b}, \mathbf{c} \in \mathbb{N}^n$
\end{definition} 

\begin{proposition}
A congruence $\sim$ on $\mathbb{N}^n$ is mesoprimary if and only if it is primary and every element in $\mathbb{N}^n/\!\sim$ is partly cancellable.
\end{proposition}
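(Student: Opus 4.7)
The plan is to translate the ideal-theoretic mesoprimary condition from Lemma \ref{Lema mesoprimary} into a combinatorial statement on $S=\mathbb{N}^n/\!\sim$, and then verify that the resulting statement matches ``primary plus every element of $S$ is partly cancellable''.

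Concretely, assuming $\sim$ is primary I would fix $\delta \subseteq \{1,\dots,n\}$ so that $I_\sim$ is $\delta$-cellular, and first verify that both of the ideals $(I_\sim:\mathbf{X}^\mathbf{u}) \cap \Bbbk[\{X_i\}_{i\in\delta}]$ and $I_\sim \cap \Bbbk[\{X_i\}_{i\in\delta}]$ (for $\mathbf{u}\in\mathbb{N}^{\delta^c}$ with $[\mathbf{u}]\neq\infty$) are pure and unital. This uses that $I_\sim$ is itself pure unital and that monomials supported on $\delta$ are nonzerodivisors modulo $I_\sim$. A pure binomial $\mathbf{X}^\mathbf{v}-\mathbf{X}^\mathbf{w}$ with $\mathbf{v},\mathbf{w}\in\mathbb{N}^\delta$ then lies in the first ideal iff $\mathbf{u}+\mathbf{v}\sim\mathbf{u}+\mathbf{w}$, and in the second iff $\mathbf{v}\sim\mathbf{w}$. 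Hence by Lemma \ref{Lema mesoprimary}, mesoprimarity of $\sim$ is equivalent to the combinatorial condition $(\ast)$: for all $\mathbf{u}\in\mathbb{N}^{\delta^c}$ with $[\mathbf{u}]\neq\infty$ and all $\mathbf{v},\mathbf{w}\in\mathbb{N}^\delta$, the implication $\mathbf{u}+\mathbf{v}\sim\mathbf{u}+\mathbf{w} \Longrightarrow \mathbf{v}\sim\mathbf{w}$ holds.

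The key supporting observation is that for any non-nil $[\mathbf{u}]\in S$ and any cancellable $[\mathbf{v}]$, the sum $[\mathbf{u}]+[\mathbf{v}]$ is non-nil; otherwise $[\mathbf{u}]+[\mathbf{v}]=\infty=\infty+[\mathbf{v}]$ would force $[\mathbf{u}]=\infty$ by cancellability of $[\mathbf{v}]$. Two consequences: first, the ``$\neq\infty$'' clause in the definition of partly cancellable is automatic whenever $\mathbf{a}$ is non-nil and $\mathbf{b},\mathbf{c}$ are cancellable, so $\infty$ is vacuously partly cancellable and cancellable elements are trivially so, and ``every element of $S$ is partly cancellable'' reduces to partly cancellability of the nilpotent non-nil elements; second, combined with the side fact (to be proved in passing) that every cancellable non-nil class is represented in $\mathbb{N}^\delta$ and every nilpotent non-nil class has a representative $\mathbf{a}_\delta+\mathbf{u}$ with $\mathbf{u}\in\mathbb{N}^{\delta^c}$ and $[\mathbf{u}]\neq\infty$, one checks that $(\ast)$ is equivalent to ``every nilpotent non-nil element of $S$ is partly cancellable''. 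The two implications of the proposition then follow, using that $\sim$ is primary iff $I_\sim$ is cellular; in the reverse direction the observation again forces $[\mathbf{u}]+[\mathbf{v}]$ to be non-nil, so partly cancellability of $[\mathbf{u}]$ applies and delivers $[\mathbf{v}]=[\mathbf{w}]$.

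The main bookkeeping obstacle is the translation step: one must verify that the two elimination ideals appearing in Lemma \ref{Lema mesoprimary} are pure and spanned by unital binomials, so that ideal equality reduces to a class-by-class comparison of congruence pairs. The only way purity of the colon ideal intersected with $\Bbbk[\{X_i\}_{i\in\delta}]$ could fail would be if some $\mathbf{X}^\mathbf{v}$ with $\mathbf{v}\in\mathbb{N}^\delta$ satisfied $[\mathbf{u}]+[\mathbf{v}]=\infty$, which is precisely what the non-nil observation rules out. Once this translation is in place, the combinatorial equivalence is essentially immediate.
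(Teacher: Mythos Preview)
Your proposal is correct and follows essentially the same route as the paper: both directions go through Lemma~\ref{Lema mesoprimary}, translating the ideal equality $(I_\sim:\mathbf{X}^\mathbf{u})\cap\Bbbk[\{X_i\}_{i\in\delta}] = I_\sim\cap\Bbbk[\{X_i\}_{i\in\delta}]$ into the combinatorial statement about partly cancellable elements. You are simply more explicit than the paper about the bookkeeping---purity and unitality of the elimination ideals, the choice of representatives in $\mathbb{N}^\delta$ and $\mathbb{N}^{\delta^c}$, and the observation that a non-nil plus a cancellable element stays non-nil---all of which the paper uses tacitly.
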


\begin{proof}
If $\sim$ is a mesoprimary congruence on $\mathbb{N}^n$, then $I= I_\sim$ is $\delta-$cellular for some $\delta \subseteq \{1, \ldots, n\}$. Thus, $\sim$ is primary. Moreover, $(I:\mathbf{X}^\mathbf{u}) \cap \Bbbk[\{X_i\}_{i \in \delta}] = I \cap \Bbbk[\{X_i\}_{i \in \delta}]$, for all $\mathbf{X}^\mathbf{u} \in \Bbbk[\{X_i\}_{i\not\in \delta}] \setminus I$ (equivalently, for all $\mathbf{u} \in \mathbb{N}^n$ such that $[\mathbf{u}]$ is nilpotent and it is not a nil). Therefore, if $[\mathbf{u}] \in \mathbb{N}^n/\!\sim$ is nilpotent and $[\mathbf{v}], [\mathbf{w}]$ are cancellable elements such that $[\mathbf{u}]  + [\mathbf{v}] = [\mathbf{u}] +  [\mathbf{w}] \neq \infty$, then $$\mathbf{X}^\mathbf{v} - \mathbf{X}^\mathbf{w} \in (I:\mathbf{X}^\mathbf{u}) \cap \Bbbk[\{X_i\}_{i \in \delta}] = I \cap \Bbbk[\{X_i\}_{i \in \delta}],$$ that is to say $[\mathbf{v}] = [\mathbf{w}].$ So, $[\mathbf{u}]$ is partly cancellative.

Conversely, suppose that $\sim$ is primary congruence on $\mathbb{N}^n$ such that  every element in $\mathbb{N}^n/\!\sim$ is partly cancellable. Since $\sim$ is primary, we have that $I_\sim$ is $\delta-$cellular, by setting $\delta = \{i \in \{1, \ldots, n\} : [\mathbf{e}_i]$ is cancellable$\}.$
Now, if $\mathbf{X}^\mathbf{u} \in \Bbbk[\{X_i\}_{i\not\in \delta}] \setminus I$, we have that $[\mathbf{u}]$ is partly cancellable. Thus, for every $\mathbf{X}^\mathbf{v} - \mathbf{X}^\mathbf{w} \in \Bbbk[\{X_i\}_{i \in \delta}]$, we have that $\mathbf{X}^\mathbf{u} (\mathbf{X}^\mathbf{v} - \mathbf{X}^\mathbf{w}) \in I \Rightarrow \mathbf{X}^\mathbf{v} - \mathbf{X}^\mathbf{w} \in I$. Therefore, $ (I:\mathbf{X}^\mathbf{u}) \cap \Bbbk[\{X_i\}_{i \in \delta}] \subseteq I \cap \Bbbk[\{X_i\}_{i \in \delta}]$. Now, since the opposite inclusion is always fulfilled, by Lemma \ref{Lema mesoprimary}, we are done.
\end{proof}

There are other intermediate constructions between cellular and primary ideals, such as the unmixed decomposition (see \cite{ES96, OjPie} and, more recently, \cite{EM}). The following example shows that unmixed cellular binomial ideals are not mesoprimary. Recall that an unmixed cellular binomial ideal is a cellular binomial ideal with no embedded associated primes (see \cite[Proposition 2.4]{OjPie}).

\begin{example}
Consider the unmixed cellular binomial $I \subset \Bbbk[X,Y]$ generated by $ \{ X^2-1,Y(X-1),Y^2 \}$ . The ideal $I$ is not mesoprimary, because $$(I : Y) \cap \Bbbk[X] = \langle X - 1 \rangle \neq \langle X^2-1 \rangle = I \cap \Bbbk[X].$$
\begin{verbatim}
      loadPackage "Binomials";
      R = QQ[X,Y]
      I = ideal(X^2-1,Y*(X-1),Y^2)
      cellularBinomialAssociatedPrimes I
      eliminate(I:Y,Y)
      eliminate(I,Y)
\end{verbatim}
\end{example}

We end this section by exhibiting the statement of Kahle and Miller which describes the primary decomposition of a mesoprimary ideal, in order to give an idea of how useful would be to have an algorithm for the mesoprimary decomposition of a cellular binomial ideal.

\begin{proposition}[{\cite[Corollary~15.2 and~Proposition~15.4]{KM}}]
\label{p:mesoprimaryPrimDec}
Let $I$ be a ($\delta$-cellular) mesoprimary ideal, and denote by
$I_{\mathcal{L}}(\rho)$ the lattice ideal $I \cap
\Bbbk[\{X_i\}_{i \in \delta}]$. 
The associated primes of $I$ are exactly the (minimal) primes of its
associated mesoprime $I + 
\mathfrak{p}_{\delta^c}$.
Moreover, if $I_{\mathcal{L}}(\rho) = \cap_{j=1}^g I_j$ is the primary decomposition
of $I_{\mathcal{L}}(\rho)$ from
Theorem~\ref{Prop Cor2.5ES}, then
\[
I = \bigcap_{j=1}^g (I+I_j)
\]
is the primary decomposition of $I$.
\end{proposition}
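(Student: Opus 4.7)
The plan is to establish the two assertions in sequence, using the cellular and mesoprimary structure to reduce to the lattice-ideal case handled by Theorem \ref{Corolario 2.2ES}.

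First, I would prove the identification of associated primes. Proposition \ref{Prop Celular-CP}(d) gives $\sqrt{I} = \sqrt{I_{\mathcal{L}}(\rho)} + \mathfrak{p}_{\delta^c}$, and Theorem \ref{Corolario 2.2ES} applied to the lattice ideal $I_{\mathcal{L}}(\rho)$ identifies its associated primes as the minimal primes $I_{\mathrm{Sat}(\mathcal{L})}(\rho'_j)$ for $j=1,\ldots,g$. Together these identify the minimal primes of $I+\mathfrak{p}_{\delta^c}$, equivalently of $I$, as the ideals $P_j := I_{\mathrm{Sat}(\mathcal{L})}(\rho'_j) + \mathfrak{p}_{\delta^c}$, each of which is automatically associated to $I$. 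The substantive step is to rule out embedded associated primes. For $P \in \mathrm{Ass}(\Bbbk[\mathbf{X}]/I)$, I would write $P = (I:h)$ for some $h$ and use the binomial structure of $I$ to replace $h$ by a monomial; cellularity then reduces to the case $h = \mathbf{X}^\mathbf{u} \in \Bbbk[\{X_i\}_{i \notin \delta}] \setminus I$. Lemma \ref{Lema mesoprimary} then yields $(I:\mathbf{X}^\mathbf{u}) \cap \Bbbk[\{X_i\}_{i \in \delta}] = I \cap \Bbbk[\{X_i\}_{i \in \delta}]$, and Proposition \ref{Prop Celular-CP} promotes this to $(I:\mathbf{X}^\mathbf{u}) + \mathfrak{p}_{\delta^c} = I + \mathfrak{p}_{\delta^c}$. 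Since cellularity forces $\mathfrak{p}_{\delta^c} \subseteq P$, it follows that $P$ contains $I + \mathfrak{p}_{\delta^c}$, hence one of the $P_j$; comparison of heights via the equicodimensionality clause of Theorem \ref{Corolario 2.2ES} then forces $P = P_j$.

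Next I would establish the primary decomposition. The inclusion $I \subseteq \bigcap_{j=1}^g (I+I_j)$ is immediate. For the reverse inclusion, I would exploit the fact that $I_{\mathcal{L}}(\rho) \subseteq I$ (Proposition \ref{Prop Celular-CP}(a)) together with $\bigcap_j I_j = I_{\mathcal{L}}(\rho)$, passing to $R = \Bbbk[\mathbf{X}]/I$ and reducing the claim to $\bigcap_j \overline{I_j} = 0$ in $R$. The mesoprimary condition, which ensures that distinct primary components of $I_{\mathcal{L}}(\rho)$ remain distinguishable modulo $I$, supplies the final control. It then remains to check that each $I + I_j$ is $P_j$-primary: the radical computation mirrors Proposition \ref{Prop Celular-CP}(c) and gives $\sqrt{I + I_j} = P_j$, while primariness reduces, again via the mesoprimary hypothesis, to the primariness of $I_j$ as a component of $I_{\mathcal{L}}(\rho)$ furnished by Theorem \ref{Corolario 2.2ES}.

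The hard part, and the essential use of the mesoprimary hypothesis, is the absence-of-embedded-primes step. For a general cellular binomial ideal, distinct witness monomials can produce distinct associated mesoprimes and hence yield associated primes beyond the minimal primes of $I + \mathfrak{p}_{\delta^c}$; it is precisely the single-associated-mesoprime hypothesis that collapses these possibilities. The intersection identity in the second claim likewise rests on this mesoprimary control, since without it the images $\overline{I_j}$ in $R$ can fail to intersect trivially.
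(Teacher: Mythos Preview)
The paper does not supply its own proof of this proposition; it is stated with a citation to \cite[Corollary~15.2 and Proposition~15.4]{KM} and left without argument, so there is nothing in the paper to compare your sketch against directly.

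That said, your outline for the first assertion contains a genuine error. The step ``use the binomial structure of $I$ to replace $h$ by a monomial'' is false: associated primes of binomial (even mesoprimary) ideals need not admit monomial witnesses. Take $I = \langle X^2-1,\, Y\rangle \subset \Bbbk[X,Y]$ with $\delta=\{1\}$; this is mesoprimary, and $\langle X-1,Y\rangle$ is an associated prime, yet for every monomial $\mathbf{X}^{\mathbf{u}}$ the colon $(I:\mathbf{X}^{\mathbf{u}})$ is either $I$ itself or $\langle 1\rangle$. Worse, if your monomial replacement did work, the argument would prove too much: from $P=(I:\mathbf{X}^{\mathbf{u}})$ together with $\mathfrak{p}_{\delta^c}\subseteq P$ and $(I:\mathbf{X}^{\mathbf{u}})+\mathfrak{p}_{\delta^c}=I+\mathfrak{p}_{\delta^c}$ you would obtain $P=I+\mathfrak{p}_{\delta^c}$, forcing the associated mesoprime to be prime and $g=1$, which is plainly wrong. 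The ``height comparison'' you invoke afterwards does not repair this: the equicodimensionality clause of Theorem~\ref{Corolario 2.2ES} constrains only the heights of the $P_j$, and gives no upper bound on the height of an arbitrary $P\in\mathrm{Ass}(\Bbbk[\mathbf{X}]/I)$, so you cannot rule out $P\supsetneq P_j$. The actual argument in \cite{KM} proceeds via a finer analysis of the monoid grading (working with localizations and \emph{witness} elements in a combinatorial sense) rather than by exhibiting monomial annihilators.

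Your treatment of the second assertion is too schematic to evaluate. The phrases ``remain distinguishable modulo $I$'' and ``reduces, again via the mesoprimary hypothesis'' do not name a mechanism for either the inclusion $\bigcap_j(I+I_j)\subseteq I$ or the primariness of $I+I_j$. A workable route for the intersection is to show that the $I_j$ are pairwise comaximal in $\Bbbk[\{X_i\}_{i\in\delta}]$ (distinct characters $\rho_j,\rho_k$ force a monomial into $I_j+I_k$, and then saturation by that monomial yields the unit ideal) and appeal to the Chinese Remainder Theorem; but that is not the argument you wrote, and the primariness of $I+I_j$ still requires a separate justification using the mesoprimary hypothesis in an explicit way.
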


Notice that the hypothesis $\Bbbk$ algebraically closed is only needed when Theorem~\ref{Prop Cor2.5ES} is applied.

\medskip
\noindent\textbf{Acknowledgement}
We thank the anonymous referees for their detailed suggestions and comments, which have greatly improved this article.
The present paper is based on a course of lectures delivered by the second author at the EACA's Third International School on Computer Algebra and Applications \url{https://www.imus.us.es/EACASCHOOL16/}. He thanks the organizers for giving him that opportunity.


\end{document}